\documentclass{amsart}
\pdfoutput=1
\usepackage{times, enumerate, array, longtable, %
amssymb, diagrams, mathrsfs, bbm, times, txfonts, verbatim}
\usepackage[initials]{amsrefs}

\title{Prime Filters in MV-algebras II}
\author{Colin G. Bailey}
\address{School of Mathematics,  Statistics \& Operations Research\\
Victoria University of Wellington\\
Wellington, New Zealand\\
}
\email{Colin.Bailey@vuw.ac.nz}
\subjclass{06D35}
\keywords{MV-algebras, prime filters}

\date{\today}

\def\one{{\mathbf 1}}
\def\zero{{\mathbf 0}}

\def\eqcl[#1]{\pmb{[}#1\pmb{]}}

\def\leftGen{{[\kern-1.1pt[}}
\def\rightGen{{]\kern-1.1pt]}}

\let\sqTo\rightsquigarrow
\let\rsf\mathscr
\let\dcl\downarrow
\providecommand{\meet}{\mathbin{\wedge}}
\providecommand{\join}{\mathbin{\vee}}

\newcommand{\card}[1]{\left| #1\right|}
\ifx\upharpoonright\undefined
     \def\restrict{\hbox{\rm\kern0.166em\accent"12\kern-0.536em$\vert$\kern0.3em}}%
  \else
     \def\restrict{\upharpoonright}%
  \fi
\makeatletter
\def\twoSet#1#2{\left\{%
\vphantom{#2}#1\thinspace\right|\nolinebreak[3]\left.%
  #2%
  \vphantom{#1}%
  \right\}%
}
\def\oneSet#1{\left\lbrace#1\right\rbrace}

\newif\if@nstr
\def\setstrfalse{\let\if@nstr=\iffalse}
\def\setstrtrue{\let\if@nstr=\iftrue}
\def\@nstr #1#2{
\def\@@nstr ##1#1##2##3\@@nstr{\ifx
\@nstr ##2\setstrfalse \else \setstrtrue \fi }
\@@nstr #2#1\@nstr \@@nstr}
\def\@separate#1|#2@{\setFront{#1}\setBack{#2}}
\def\lb#1\rb{\@nstr|{#1} \if@nstr \@separate#1 @ \twoSet{\@setFront}{\@setBack}%
\else \@separate |{#1 }@ \oneSet{\@setBack}\fi%
}
\def\setFront#1{\def\@setFront{#1}}
\def\setBack#1{\def\@setBack{#1}}
\def\Set#1{\lb{#1}\rb}

\def\oneBrk#1{\left\langle#1\right\rangle}
\def\twoBrk#1#2{\left\langle%
\vphantom{#2}#1\thinspace\right|\nolinebreak[3]\left.%
  #2%
  \vphantom{#1}%
  \right\rangle%
}
\def\brk<#1>{\@nstr|{#1} \if@nstr \@separate#1 @ \twoBrk{\@setFront}{\@setBack}%
\else \@separate |{#1 }@ \oneBrk{\@setBack}\fi%
}

\def\lemref#1{\normalfont{lemma}~\ref{#1}}

\def\propref#1{\normalfont{proposition}~\ref{#1}}
\theoremstyle{plain}
\newtheorem{thm}{Theorem}[section]
\newtheorem{lem}[thm]{Lemma}
\newtheorem{cor}[thm]{Corollary}
\newtheorem{prop}[thm]{Proposition}
\newtheorem{defn}[thm]{Definition}

\theoremstyle{remark}
{}
{}
{}
{}

\@ifpackageloaded{bbm}{
\newcommand{\Q}{{\mathbbm{Q}}}

\newcommand{\R}{{\mathbbm{R}}}

}{

\newcommand{\Q}{{\mathbb{Q}}}

\newcommand{\R}{{\mathbb{R}}}
}
\makeatother

\begin{document}
\begin{abstract} 	
    In this document we consider the prime and maximal spectra of 
   an MV-algebra with certain natural operations. Several new 
   MV-algebras are constructed in this fashion.
\end{abstract}
\maketitle

\section{Introduction}
In the representation theorems for MV-algebras of Martinez 
(\cite{Martinez:one, Martinez:two}) and Martinez \& Priestley 
(\cite{MartPreist:one}) much use was made of 
the function on filters: $a\mapsto \rsf F_{a}=\Set{z | z\to a\notin\rsf F}$. For 
a fixed lattice filter $\rsf F$ the set $\Set{\rsf F_{a} | 
a\in\mathcal L}$ was shown to be linearly ordered. In 
a previous paper \cite{mvpaperOne} we defined the kernel of a filter
and showed how to compute the kernel of several natural filters.
In this paper we extend this analysis by generalising the definition 
of kernel. We are able to show that the class of prime lattice 
filters of an MV-algebra naturally decomposes into linearly ordered 
MV-algebras. We also relate these operations to those defined in 
\cite{Martinez:one, Martinez:two} and to other natural operations. 

\begin{defn}
    Let $\rsf F$ be any order filter.
    The set 
    $$
    \rsf F_{a}=\Set{z | z\to a\notin\rsf F}
    $$
    is called the \emph{subordinate of $\rsf F$ at $a$}.
\end{defn}

We will primarily be considering the case when $\rsf F$ is prime
which implies that $\rsf F_{a}$ is also a prime filter. 

\begin{defn}
    The \emph{kernel} of an order filter $\rsf F$ is the set
    $$
    \mathcal{K}(\rsf F)=\Set{z | \forall a\notin\rsf F\ z\to 
    a\notin\rsf F}.
    $$
    
    The \emph{prime spectrum} of a prime implication filter $P$ is
    $$
    \text{PSpec}(P)=\Set{\rsf F | \mathcal K(\rsf F)=P}.
    $$
\end{defn}

We distinguish the  special case of $a=0$ with the notation:
$$
\rsf F^{+}=\rsf F_{0}.
$$

$\mathcal K(\rsf F)$ is an implication filter contained in $\rsf F$ 
and is prime iff $\rsf F$ is prime.
In \cite{mvpaperOne} we showed that 

\begin{prop}\label{prop:SubAEq}
    If $a\notin\rsf F$ then $\mathcal K(\rsf F)=\mathcal K(\rsf F_{a})$.
\end{prop}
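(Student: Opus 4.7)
The plan is to establish the equality by proving both inclusions, repeatedly using the contrapositive reformulation of the kernel condition: $z\in\mathcal{K}(\rsf G)$ iff for all $x$, $z\to x\in\rsf G$ implies $x\in\rsf G$---that is, $z$ satisfies modus ponens with respect to $\rsf G$. The MV-algebra identities I will invoke are the swap $x\to(y\to w) = y\to(x\to w)$, the residuation consequence $z\otimes(z\to b)\otimes(b\to a)\leq a$, and $(z\to a)\to a = z\vee a$.

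For $\mathcal{K}(\rsf F)\subseteq\mathcal{K}(\rsf F_{a})$, I would fix $z\in\mathcal{K}(\rsf F)$ and $b\notin\rsf F_{a}$, so that $b\to a\in\rsf F$. The residuation computation gives $b\to a\leq(z\to b)\to(z\to a)$, and swap rewrites the right-hand side as $z\to((z\to b)\to a)$. Upward closure of $\rsf F$ then yields $z\to((z\to b)\to a)\in\rsf F$, and modus ponens for $z$ in $\rsf F$ delivers $(z\to b)\to a\in\rsf F$, i.e.\ $z\to b\notin\rsf F_{a}$, as required.

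For $\mathcal{K}(\rsf F_{a})\subseteq\mathcal{K}(\rsf F)$, let $z\in\mathcal{K}(\rsf F_{a})$. I apply the modus ponens of $z$ for $\rsf F_{a}$ at the auxiliary element $b=a$: since $a\to a=\one\in\rsf F$ we have $a\notin\rsf F_{a}$, and the condition yields $(z\to a)\to a\in\rsf F$, i.e.\ $z\vee a\in\rsf F$. Primality of $\rsf F$ combined with $a\notin\rsf F$ then forces $z\in\rsf F$, and the closure of $\rsf F$ under implication converts any hypothetical $z\to c\in\rsf F$ with $c\notin\rsf F$ into the contradiction $c\in\rsf F$; hence $z\to c\notin\rsf F$ whenever $c\notin\rsf F$, which is exactly $z\in\mathcal{K}(\rsf F)$.

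The main obstacle is the second inclusion, since one has to extract information about $\rsf F$ from a condition formulated in terms of $\rsf F_{a}$. The pivotal move is the single-element choice $b=a$ in the kernel condition for $\rsf F_{a}$, which via the MV-identity $(z\to a)\to a=z\vee a$ transports the problem into the join-structure of $\rsf F$, where primality can finally pin down the location of $z$; any other choice of $b$ produces compound expressions $(z\to b)\to a$ whose membership in $\rsf F$ does not directly separate $z$ from its complement.
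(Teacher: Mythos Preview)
Your first inclusion is correct. The second has a genuine gap at its last step: you invoke ``the closure of $\rsf F$ under implication'' to pass from $z\in\rsf F$ and $z\to c\in\rsf F$ to $c\in\rsf F$, but $\rsf F$ is only assumed to be a prime \emph{lattice} filter, not an implication filter. In $[0,1]$ with $\rsf F=[\tfrac12,1]$ one has $\tfrac12\in\rsf F$ and $\tfrac12\to\tfrac14=\tfrac34\in\rsf F$ yet $\tfrac14\notin\rsf F$; so this modus-ponens step is invalid in general, and the conclusion $z\in\rsf F$ that you correctly obtain is far weaker than the required $z\in\mathcal K(\rsf F)$.

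The repair is already latent in your own computation. Your choice $b=a$ together with primality does not merely place a single $z$ in $\rsf F$; it establishes the set identity $(\rsf F_{a})_{a}=\rsf F$. Indeed $w\in(\rsf F_{a})_{a}$ iff $w\to a\notin\rsf F_{a}$ iff $(w\to a)\to a=w\vee a\in\rsf F$ iff $w\in\rsf F$, the last equivalence using $a\notin\rsf F$ and primality. Since also $a\notin\rsf F_{a}$ (as you observed), your correctly proven first inclusion, applied with $\rsf F_{a}$ in the role of $\rsf F$, yields
\[
\mathcal K(\rsf F_{a})\ \subseteq\ \mathcal K\bigl((\rsf F_{a})_{a}\bigr)\ =\ \mathcal K(\rsf F),
\]
which is exactly the missing direction. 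Note that the proof of the first inclusion uses only that $\rsf F$ is an order filter, so it transfers to $\rsf F_{a}$ without difficulty. (The present paper defers the proof of this proposition to \cite{mvpaperOne}, so there is no in-paper argument to compare against.)
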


The definition of $\mathcal K(\rsf F)$ can clearly be generalised to 
the following:
\begin{defn}
    If $\rsf F$ is any prime lattice filter and $X\cap\rsf F=\emptyset$ let
    $$
    \mathcal K(\rsf F; X)=\bigcap_{a\in X}\rsf F_{a}.
    $$
\end{defn}

Clearly we have -- if $\rsf K=\mathcal K(\rsf F)$ then
\begin{enumerate}[(a)]
    \item  $\mathcal K(\rsf F;  X)$ is a filter -- as it is the 
    intersection of filters.
    
    \item  $\rsf F_{a}=\mathcal K(\rsf F; \Set a)$ for $a\notin\rsf F$; 

    \item  $\mathcal K(\rsf F)=\mathcal K(\rsf F; \mathcal 
    L\setminus\rsf F)$; 

    \item  $\mathcal K(\rsf F;  X)$ depends only on $X/\mathcal 
    K(\rsf F)$ -- as
    $\eta(a)=\eta(b)$ iff $\rsf F_{a}=\rsf F_{b}$.
    
    \item if $X\dcl=\Set{ z | \exists x_{1}, \dots,  x_{n}\in X\ z\le 
    \bigvee_{i}x_{i}}$ 
    then $\mathcal K(\rsf F; X)=\mathcal K(\rsf F; 
    X\dcl)$ -- since $z\le x$ implies $\rsf 
    F_{x}\subseteq\rsf F_{z}$ and $\rsf F_{a\join b}$ is always either 
    $\rsf F_{a}$ or $\rsf F_{b}$.
\end{enumerate}

Because of part (e) we need only consider $\mathcal K(\rsf F; I)$ when 
$I$ is a lattice ideal. A particular case gives us the following 
definition.
\begin{defn}
    Let $\rsf F$ and $\rsf G$ be two prime lattice filters with 
    $\rsf F\subseteq\rsf G$. Then
    $$
    \rsf F\sqTo\rsf G=\mathcal K(\rsf F; \mathcal L\setminus\rsf G).
    $$
\end{defn}
We extend this definition to all $\rsf F$ and $\rsf G$ by 
defining
$$
    \rsf F\sqTo\rsf G=(\rsf F\cap\rsf G)\sqTo\rsf G.
$$
We intend to show that there are naturally defined sets of prime 
filters closed under this operation,  that are MV-algebras,  with 
order being reverse inclusion. 

\section{Basic facts}
Here are some of the easier facts about this operation.

\begin{prop}
    Let $\rsf F\subseteq\rsf G$. Then
    $$
	\rsf F\sqTo\rsf G=\Set{z | \forall f\in\rsf F\ f\otimes 
	z\in\rsf G}.
    $$
\end{prop}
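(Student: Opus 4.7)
The plan is to unwind the definition and translate it via the MV-algebra residuation identity $x \otimes y \le z \iff x \le y \to z$. By definition, $z \in \rsf F \sqTo \rsf G = \bigcap_{a \notin \rsf G} \rsf F_a$ iff for every $a \notin \rsf G$, $z \to a \notin \rsf F$. So I need to show that this latter condition is equivalent to: for every $f \in \rsf F$, $f \otimes z \in \rsf G$. I will prove the contrapositive of each implication, since that lets me construct the relevant witness directly.

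For the first direction, I assume that there exists $f \in \rsf F$ with $f \otimes z \notin \rsf G$ and I want to produce an $a \notin \rsf G$ with $z \to a \in \rsf F$. The natural choice is $a = f \otimes z$ itself; then $a \notin \rsf G$ by hypothesis, and the residuation law $f \otimes z \le f \otimes z$ gives $f \le z \to (f \otimes z) = z \to a$. Since $\rsf F$ is up-closed and $f \in \rsf F$, we get $z \to a \in \rsf F$, as required.

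For the other direction, I assume there exists $a \notin \rsf G$ with $z \to a \in \rsf F$ and I want to produce an $f \in \rsf F$ with $f \otimes z \notin \rsf G$. Take $f = z \to a$; then $f \in \rsf F$, and residuation applied to $f \le z \to a$ gives $f \otimes z \le a$. Since $\rsf G$ is up-closed and $a \notin \rsf G$, we must have $f \otimes z \notin \rsf G$.

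There is no real obstacle here: the only subtlety is keeping track of which filter's up-closure is used in which direction, and making sure the witness $a = f \otimes z$ (respectively $f = z \to a$) is legal. Everything else is a direct appeal to residuation, which holds in every MV-algebra. The hypothesis $\rsf F \subseteq \rsf G$ does not even enter the argument, which is consistent with the extended definition $\rsf F \sqTo \rsf G = (\rsf F \cap \rsf G) \sqTo \rsf G$ used later in the paper.
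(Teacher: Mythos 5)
Your proof is correct and follows essentially the same route as the paper's: both arguments unwind the definition of $\rsf F\sqTo\rsf G$ and translate between $f\le z\to a$ and $f\otimes z\le a$ via residuation, using the up-closedness of $\rsf F$ and $\rsf G$. The only difference is presentational: you argue the two contrapositive directions with explicit witnesses $a=f\otimes z$ and $f=z\to a$, whereas the paper compresses the same content into a chain of equivalences.
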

\begin{proof}
    \begin{align*}
        z\in\rsf F\sqTo\rsf G & \iff \forall a\notin\rsf G\ z\to 
	a\notin\rsf F  \\
         & \iff \forall a\notin\rsf G\ \forall f\in\rsf F\lnot(f\le 
	 z\to a)\\
         & \iff \forall a\notin\rsf G\ \forall f\in\rsf 
	 F\lnot(f\otimes z\le a) \\
         & \iff \forall f\in\rsf F\ z\otimes f\notin\mathcal 
	 L\setminus\rsf G  \\
         & \iff\forall f\in\rsf F\ z\otimes f\in\rsf G.
    \end{align*}
\end{proof}

\begin{prop}\label{prop:incl}
    $\rsf F\sqTo\rsf G\subseteq\rsf G$.
\end{prop}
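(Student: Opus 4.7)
The plan is to reduce immediately to the case $\rsf F\subseteq\rsf G$ (which is covered by the hypothesis of the preceding proposition) and then apply that proposition's characterisation in terms of $\otimes$. By the extension clause of the definition we have $\rsf F\sqTo\rsf G = (\rsf F\cap\rsf G)\sqTo\rsf G$, and $\rsf F\cap\rsf G\subseteq\rsf G$, so it suffices to prove the statement under the assumption $\rsf F\subseteq\rsf G$.

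Under that assumption, the preceding proposition gives
$$
\rsf F\sqTo\rsf G = \Set{z | \forall f\in\rsf F\ f\otimes z\in\rsf G}.
$$
Since $\rsf F$ is a filter, $\one\in\rsf F$. So for any $z\in \rsf F\sqTo\rsf G$ we may instantiate $f=\one$ to obtain $\one\otimes z\in\rsf G$, and because $\one\otimes z = z$ in any MV-algebra, we conclude $z\in\rsf G$. This gives the desired inclusion.

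There is really no obstacle here: the result is immediate once one has the $\otimes$-characterisation from the previous proposition and remembers that every filter contains $\one$. The only mildly subtle point is the reduction step using the extended definition $\rsf F\sqTo\rsf G=(\rsf F\cap\rsf G)\sqTo\rsf G$, which is needed because the preceding proposition is stated only for pairs with $\rsf F\subseteq\rsf G$.
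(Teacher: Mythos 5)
Your proof is correct and follows essentially the same route as the paper: instantiate $f=\one$ in the $\otimes$-characterisation of $\rsf F\sqTo\rsf G$ from the preceding proposition and use $\one\otimes z=z$. Your explicit reduction to the case $\rsf F\subseteq\rsf G$ via $\rsf F\sqTo\rsf G=(\rsf F\cap\rsf G)\sqTo\rsf G$ is a small extra care step the paper leaves implicit, but the argument is the same.
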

\begin{proof}
    As if $z\in\rsf F\sqTo\rsf G$ then $1\in\rsf F$ and so 
    $z=1\otimes z\in\rsf G$.
\end{proof}

\begin{prop}\label{prop:inclOne}
    If $\rsf G\subseteq\rsf F$ then $\rsf F\sqTo\rsf G= \mathcal 
    K(\rsf G)$.
\end{prop}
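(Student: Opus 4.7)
The plan is a short unfolding of definitions. The hypothesis $\rsf G\subseteq\rsf F$ immediately gives $\rsf F\cap\rsf G=\rsf G$, so by the extension clause in the definition of $\sqTo$ we have
$$
\rsf F\sqTo\rsf G \;=\; (\rsf F\cap\rsf G)\sqTo\rsf G \;=\; \rsf G\sqTo\rsf G.
$$
Now $\rsf G\subseteq\rsf G$ falls inside the originally defined case, so
$$
\rsf G\sqTo\rsf G \;=\; \mathcal K(\rsf G;\mathcal L\setminus\rsf G),
$$
and by item (c) of the list immediately following the definition of $\mathcal K(\rsf F;X)$ the right-hand side equals $\mathcal K(\rsf G)$.

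So the entire argument is a three-line chain of equalities. There is essentially no obstacle: the only thing to verify is that the definitions have been set up consistently, namely that the formula $\mathcal K(\rsf F;\mathcal L\setminus\rsf F)=\mathcal K(\rsf F)$ really is available when the ``outer'' and ``inner'' filters coincide. Since this is recorded explicitly as item (c), no further work is needed. I would present the proof as a one-line display followed by the citation of (c).
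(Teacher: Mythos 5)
Your proof is correct and is essentially identical to the paper's own argument: both use the extension clause to write $\rsf F\sqTo\rsf G=(\rsf F\cap\rsf G)\sqTo\rsf G=\rsf G\sqTo\rsf G$ and then identify $\rsf G\sqTo\rsf G$ with $\mathcal K(\rsf G;\mathcal L\setminus\rsf G)=\mathcal K(\rsf G)$ via item (c). Nothing further is needed.
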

\begin{proof}
    As $\rsf F\sqTo\rsf G= (\rsf F\cap\rsf G)\sqTo\rsf G= \rsf 
    G\sqTo\rsf G= \mathcal 
    K(\rsf G)$.
\end{proof}

\begin{prop}
    Let $\rsf F\subseteq\rsf G_{1}\subseteq\rsf G_{2}$. Then
    $$
	\rsf F\sqTo\rsf G_{1}\subseteq\rsf F\sqTo\rsf G_{2}.
    $$
\end{prop}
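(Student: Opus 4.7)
The plan is to exploit the characterization proved in the first proposition of Section~2, namely
$$\rsf F\sqTo\rsf G=\Set{z | \forall f\in\rsf F\ f\otimes z\in\rsf G}.$$
Once this is in hand, monotonicity in the second argument is essentially immediate: if $z\in\rsf F\sqTo\rsf G_{1}$, then for every $f\in\rsf F$ we have $f\otimes z\in\rsf G_{1}$, and since $\rsf G_{1}\subseteq\rsf G_{2}$ this forces $f\otimes z\in\rsf G_{2}$, whence $z\in\rsf F\sqTo\rsf G_{2}$. So the ``proof'' reduces to a single line of rewriting.

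As an independent sanity check, one can also argue directly from the original definition. Since we are in the case $\rsf F\subseteq\rsf G_{1}\subseteq\rsf G_{2}$, the extension clause $\rsf F\sqTo\rsf G=(\rsf F\cap\rsf G)\sqTo\rsf G$ does not intervene, and both sides equal $\mathcal K(\rsf F;\mathcal L\setminus\rsf G_{i})=\bigcap_{a\notin\rsf G_{i}}\rsf F_{a}$. Because $\rsf G_{1}\subseteq\rsf G_{2}$ implies $\mathcal L\setminus\rsf G_{2}\subseteq\mathcal L\setminus\rsf G_{1}$, the intersection over the larger index set is contained in the intersection over the smaller one, giving the claimed inclusion. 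There is no real obstacle; I would pick the first approach since the characterization lemma has just been established and the one-line argument via $\otimes$ is the cleanest.
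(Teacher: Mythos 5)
Your first argument is exactly the paper's own proof: apply the $\otimes$-characterization of $\sqTo$ and note that $f\otimes z\in\rsf G_{1}\subseteq\rsf G_{2}$. Correct, and essentially identical to the paper's one-line argument (your alternative via $\mathcal K(\rsf F;\mathcal L\setminus\rsf G_{i})$ is a fine cross-check but not needed).
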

\begin{proof}
    Let $z\in\rsf F\sqTo\rsf G_{1}$ and $f\in\rsf F$. Then $z\otimes 
    f\in\rsf G_{1}$ and so must be in $\rsf G_{2}$.
\end{proof}

\begin{prop}\label{prop:revIncl}
    Let $\rsf F_{1}\subseteq\rsf F_{2}\subseteq\rsf G$. Then
    $$
	\rsf F_{2}\sqTo\rsf G\subseteq\rsf F_{1}\sqTo\rsf G.
    $$
\end{prop}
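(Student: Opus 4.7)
The plan is to reduce this to the characterization given in the first proposition of Section~2, namely that under the hypothesis $\rsf F\subseteq\rsf G$ we have $\rsf F\sqTo\rsf G=\Set{z | \forall f\in\rsf F\ f\otimes z\in\rsf G}$. Both inclusions $\rsf F_{1}\subseteq\rsf G$ and $\rsf F_{2}\subseteq\rsf G$ are given by hypothesis, so this characterization is available for both sides without needing to unwind the definition through $\mathcal K(\,\cdot\,;\mathcal L\setminus\rsf G)$.

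Once that is in hand, the argument is essentially a monotonicity-of-universal-quantifier remark: the condition defining $\rsf F_{2}\sqTo\rsf G$ quantifies $f$ over a larger set ($\rsf F_{2}$) than the condition defining $\rsf F_{1}\sqTo\rsf G$ (which quantifies over $\rsf F_{1}\subseteq\rsf F_{2}$), so the stronger condition implies the weaker. Concretely, I would take $z\in\rsf F_{2}\sqTo\rsf G$ and an arbitrary $f\in\rsf F_{1}$; since $\rsf F_{1}\subseteq\rsf F_{2}$, also $f\in\rsf F_{2}$, and the characterization gives $f\otimes z\in\rsf G$. This verifies the defining condition for $\rsf F_{1}\sqTo\rsf G$, yielding $z\in\rsf F_{1}\sqTo\rsf G$.

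There is no real obstacle — this is an immediate contravariance observation parallel in spirit to the preceding proposition (covariance in the $\rsf G$ variable). The only thing to be careful about is invoking the characterization legitimately, which requires both $\rsf F_{1}\subseteq\rsf G$ and $\rsf F_{2}\subseteq\rsf G$; the assumed chain $\rsf F_{1}\subseteq\rsf F_{2}\subseteq\rsf G$ supplies both. No appeal to the extended definition $\rsf F\sqTo\rsf G=(\rsf F\cap\rsf G)\sqTo\rsf G$ is needed.
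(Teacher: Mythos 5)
Your argument is correct and is essentially the paper's own proof: both take $z\in\rsf F_{2}\sqTo\rsf G$ and $f\in\rsf F_{1}\subseteq\rsf F_{2}$ and conclude $z\otimes f\in\rsf G$ via the $\otimes$-characterization of $\sqTo$, which applies since the chain hypothesis gives both containments in $\rsf G$. Nothing further is needed.
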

\begin{proof}
    Let $z\in\rsf F_{2}\sqTo\rsf G$ and $f\in\rsf F_{1}$. Then 
    $f\in\rsf F_{2}$ and so $z\otimes f\in\rsf G$.
\end{proof}

\begin{prop}\label{prop:plus}
    Let $\rsf F\subseteq\rsf G$. Then
    $$
	\rsf F\sqTo\rsf G=\rsf G^{+}\sqTo\rsf F^{+}.
    $$
\end{prop}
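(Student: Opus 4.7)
The plan is to use the characterization of $\rsf F \sqTo \rsf G$ just proved in the preceding proposition and match the two sides element-by-element through the MV-algebra De Morgan / implication identities, since the asymmetry in the statement is really just the switch between $\otimes$ and $\to$ and between a filter and its $\cdot^+$.

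First I would unpack the right-hand side. Applying the previous proposition to $\rsf G^+ \subseteq \rsf F^+$ (which holds because $\rsf F \subseteq \rsf G$ forces $\rsf G^+ \subseteq \rsf F^+$ by negating membership), I get
\[
\rsf G^+ \sqTo \rsf F^+ = \Set{z | \forall g\in\rsf G^+\ g\otimes z\in\rsf F^+}.
\]
Then I would translate the two $\cdot^+$-conditions using the definition $\rsf H^+ = \Set{x | \neg x\notin\rsf H}$: the condition on the right becomes $\forall g\ \bigl(\neg g\notin\rsf G \implies \neg(g\otimes z)\notin\rsf F\bigr)$.

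Next I substitute $a = \neg g$, so that $g = \neg a$ (using involutivity of $\neg$ in MV-algebras) and $g$ runs over all elements exactly as $a$ does. This rewrites the condition as $\forall a\notin\rsf G\ \neg(\neg a\otimes z)\notin\rsf F$. The key MV-identity is then
\[
\neg(\neg a \otimes z) = a \oplus \neg z = z \to a,
\]
so the condition collapses to $\forall a\notin\rsf G\ z\to a\notin\rsf F$, which is exactly $z\in\bigcap_{a\notin\rsf G}\rsf F_a = \mathcal K(\rsf F;\mathcal L\setminus\rsf G) = \rsf F\sqTo\rsf G$. Read from top to bottom this gives one inclusion; every step is an equivalence, so both inclusions come out at once.

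There isn't really a hard step; the only thing to be careful about is that the substitution $a\leftrightarrow\neg g$ is bijective (involutivity), and that the translation between the $\otimes$-form and the $\to$-form of the defining condition is precisely the content of the preceding proposition combined with the standard De Morgan law $\neg(x\otimes y) = \neg x\oplus\neg y$. Once those two ingredients are in hand, the proposition reduces to writing the same set in two notations.
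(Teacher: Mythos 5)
Your proof is correct and takes essentially the same route as the paper: both arguments rest on the $\otimes$-characterization of $\sqTo$ from the preceding proposition together with the identity $\lnot(\lnot a\otimes z)=z\to a$ and the involutivity of negation. The only organizational difference is that you run a single chain of equivalences, whereas the paper proves the inclusion $\rsf F\sqTo\rsf G\subseteq\rsf G^{+}\sqTo\rsf F^{+}$ directly and then gets the reverse by applying that inclusion to the pair $\rsf G^{+}\subseteq\rsf F^{+}$ and using $\rsf F^{++}=\rsf F$.
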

\begin{proof}
    Let $z\in\rsf F\sqTo\rsf G$ and $a\notin\rsf F^{+}$. Then $\lnot 
    a\in\rsf F$ and so $z\otimes\lnot a=\lnot (z\to a)\in\rsf G$. 
    Hence $z\to a\notin\rsf G^{+}$. 
    
    Thus we have $\rsf F\sqTo\rsf G\subseteq\rsf G^{+}\sqTo\rsf F^{+}$. 
    
    And so $\rsf G^{+}\sqTo\rsf F^{+}\subseteq \rsf F^{++}\sqTo\rsf 
    G^{++}= \rsf F\sqTo\rsf G$.
\end{proof}

\begin{prop}\label{prop:OneOne}
    $$
	\mathcal K(\rsf F)\sqTo\rsf F=\rsf F.
    $$
\end{prop}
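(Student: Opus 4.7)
The plan is to combine the characterization $\rsf F\sqTo\rsf G=\{z\mid\forall f\in\rsf F,\; f\otimes z\in\rsf G\}$ from the first proposition of Section~2 with the definition of $\mathcal K(\rsf F)$, and prove the two inclusions separately.

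The inclusion $\mathcal K(\rsf F)\sqTo\rsf F\subseteq\rsf F$ is immediate from \propref{prop:incl}, so no work is needed there.

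For the reverse inclusion, let $z\in\rsf F$ and pick an arbitrary $k\in\mathcal K(\rsf F)$. By the characterization above, it suffices to show $k\otimes z\in\rsf F$. Suppose, for contradiction, that $a:=k\otimes z\notin\rsf F$. Since $k\in\mathcal K(\rsf F)$ and $a\notin\rsf F$, the definition of the kernel gives $k\to a\notin\rsf F$. On the other hand, the residuation identity $k\otimes z\le k\otimes z$ rewrites as $z\le k\to(k\otimes z)=k\to a$; because $\rsf F$ is upward closed and $z\in\rsf F$, this forces $k\to a\in\rsf F$, a contradiction. Hence $k\otimes z\in\rsf F$ for every $k\in\mathcal K(\rsf F)$, which shows $z\in\mathcal K(\rsf F)\sqTo\rsf F$.

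There is no real obstacle here: once one notices that the single residuation step $z\le k\to(k\otimes z)$ turns a putative failure $k\otimes z\notin\rsf F$ into a $k\to a$ that must lie in $\rsf F$ (contradicting membership of $k$ in the kernel), the argument is essentially one line in each direction.
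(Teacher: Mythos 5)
Your proof is correct and follows the paper's own route: one inclusion by \propref{prop:incl}, the other via the characterization $\rsf F\sqTo\rsf G=\Set{z | \forall f\in\rsf F\ f\otimes z\in\rsf G}$ applied to $\mathcal K(\rsf F)\subseteq\rsf F$. The only difference is that the paper simply asserts $f\otimes k\in\rsf F$ for $f\in\rsf F$, $k\in\mathcal K(\rsf F)$, whereas you supply the short residuation-and-kernel justification for that step.
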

\begin{proof}
    We have $\mathcal K(\rsf F)\sqTo\rsf F\subseteq\rsf F$ from 
    \propref{prop:incl}.
    
    If $f\in\rsf F$ then for any $k\in\mathcal K(\rsf F)$ we have 
    $f\otimes k\in\rsf F$. Hence $f\in\mathcal K(\rsf F)\sqTo\rsf F$.
\end{proof}

\begin{prop}
    Let $\rsf F, \rsf H\subseteq\rsf G$. Then
    $$
    \rsf F\subseteq\rsf H\sqTo\rsf G \text{ iff }\rsf H\subseteq\rsf 
    F\sqTo\rsf G.
    $$
\end{prop}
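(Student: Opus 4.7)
The plan is to unfold both inclusions using the characterization already established in the first proposition of this section, namely that for $\rsf A\subseteq\rsf B$ one has $\rsf A\sqTo\rsf B=\Set{z | \forall a\in\rsf A\ a\otimes z\in\rsf B}$. Since we have both $\rsf F\subseteq\rsf G$ and $\rsf H\subseteq\rsf G$ by hypothesis, this characterization applies to both $\rsf H\sqTo\rsf G$ and $\rsf F\sqTo\rsf G$ directly, without needing to pass through intersections.

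First I would rewrite the left-hand condition $\rsf F\subseteq\rsf H\sqTo\rsf G$ as: for every $f\in\rsf F$ and every $h\in\rsf H$, $h\otimes f\in\rsf G$. Then I would rewrite the right-hand condition $\rsf H\subseteq\rsf F\sqTo\rsf G$ as: for every $h\in\rsf H$ and every $f\in\rsf F$, $f\otimes h\in\rsf G$. Since $\otimes$ is commutative in any MV-algebra, the two conditions are literally the same universally quantified statement, and the equivalence follows at once.

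There is essentially no obstacle here: the proposition is a symmetry statement about $\sqTo$ that is made transparent once one has the $\otimes$-description of $\sqTo$. The only thing to be careful about is to quote the first proposition rather than trying to work from the original $\mathcal K(\rsf F;\mathcal L\setminus\rsf G)$ definition, where the symmetry is not visible, and to note explicitly that the hypothesis $\rsf F,\rsf H\subseteq\rsf G$ is what lets us apply that characterization directly on both sides.
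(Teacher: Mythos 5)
Your proposal is correct and matches the paper's own argument: both unfold the inclusions via the $\otimes$-characterization of $\sqTo$ and observe that the resulting condition $\forall f\in\rsf F\ \forall h\in\rsf H\ f\otimes h\in\rsf G$ is symmetric in $\rsf F$ and $\rsf H$ (your explicit appeal to commutativity of $\otimes$ just spells out that symmetry). No gaps.
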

\begin{proof}
    We note that
    $\rsf F\subseteq\rsf H\sqTo\rsf G$ iff
    $\forall f\in\rsf F\ \forall h\in\rsf H\ f\otimes h\in\rsf G$
    is symmetric in $\rsf F$ and $\rsf H$.
\end{proof}

\begin{prop}\label{prop:axiomC}
    $$
	\rsf F\sqTo(\rsf H\sqTo\rsf G)=\rsf H\sqTo(\rsf F\sqTo\rsf G).
    $$
\end{prop}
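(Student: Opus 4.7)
The plan is to unfold each side using the $\otimes$-characterization of the preceding proposition and then reduce both unfoldings to a common symmetric condition.

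Extending that characterization to arbitrary $\rsf F$ via the convention $\rsf F\sqTo\rsf G=(\rsf F\cap\rsf G)\sqTo\rsf G$ gives
\[
\rsf F\sqTo\rsf G\;=\;\{z\mid\forall f\in\rsf F\cap\rsf G,\ f\otimes z\in\rsf G\}.
\]
Applied twice this yields
\[
z\in\rsf F\sqTo(\rsf H\sqTo\rsf G)\iff\forall f\in\rsf F\cap(\rsf H\sqTo\rsf G),\ \forall h\in\rsf H\cap\rsf G,\ f\otimes h\otimes z\in\rsf G,
\]
and the analogous unfolding of $\rsf H\sqTo(\rsf F\sqTo\rsf G)$ swaps the roles of $\rsf F$ and $\rsf H$. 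Associativity and commutativity of $\otimes$ already make the triple product $f\otimes h\otimes z$ symmetric in $f$ and $h$, so only the two quantifier ranges still need to be reconciled.

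Next I would introduce the manifestly symmetric condition
\[
(\star)\qquad\forall f\in\rsf F\cap\rsf G,\ \forall h\in\rsf H\cap\rsf G,\ f\otimes h\in\rsf G\;\Longrightarrow\;f\otimes h\otimes z\in\rsf G,
\]
and argue that each of the two unfoldings is equivalent to $(\star)$. The direction $(\star)\Rightarrow$ unfolding is immediate: if $f\in\rsf F\cap(\rsf H\sqTo\rsf G)$ and $h\in\rsf H\cap\rsf G$, then $f\otimes h\in\rsf G$ by the very definition of $\rsf H\sqTo\rsf G$, and $(\star)$ delivers the conclusion.

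The converse---extracting $(\star)$ from the unfolded hypothesis---is the main obstacle. Given $f\in\rsf F\cap\rsf G$ and $h\in\rsf H\cap\rsf G$ with $f\otimes h\in\rsf G$, the goal is to exhibit an element $f'\in\rsf F\cap(\rsf H\sqTo\rsf G)$ with $f'\otimes h=f\otimes h$; applying the unfolded hypothesis to the pair $(f',h)$ then yields $f\otimes h\otimes z=f'\otimes h\otimes z\in\rsf G$. The natural candidate is $f':=h\to(f\otimes h)$, which by the MV-algebra identity $h\to(f\otimes h)=f\vee\lnot h$ satisfies $f'\ge f$ (so $f'\in\rsf F$) and $f'\otimes h=(f\vee\lnot h)\otimes h=f\otimes h$ by distributivity of $\otimes$ over $\vee$. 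The hard step is then checking $f'\in\rsf H\sqTo\rsf G$: writing $h''\otimes f'=(h''\otimes f)\vee(h''\otimes\lnot h)$ for each $h''\in\rsf H\cap\rsf G$, one must argue this join lies in $\rsf G$, invoking primeness of $\rsf G$ and the MV-algebraic prelinearity identity $(h\to h'')\vee(h''\to h)=1\in\rsf G$ to force at least one of $h''\otimes f$ or $h''\otimes\lnot h$ into $\rsf G$ for every $h''$, splitting cases based on whether $h''\ge h$ or not.
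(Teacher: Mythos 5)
Your overall plan --- unfold both sides via the $\otimes$-characterisation and exploit associativity/commutativity of $\otimes$ --- is essentially the paper's: after reducing to $\rsf F,\rsf H\subseteq\rsf G$, the paper simply writes $x\in\rsf F\sqTo(\rsf H\sqTo\rsf G)$ iff $\forall f\in\rsf F\ \forall h\in\rsf H\ (x\otimes f)\otimes h\in\rsf G$ and stops, so the quantifier-range discrepancy you isolate (the outer arrow only quantifies over $\rsf F\cap(\rsf H\sqTo\rsf G)$) is precisely what the paper glosses over, and your easy direction $(\star)\Rightarrow$ membership is fine. But your repair of the converse has a genuine gap: the witness $f'=h\to(f\otimes h)=f\join\lnot h$ need not lie in $\rsf H\sqTo\rsf G$, and the dichotomy you invoke to prove that it does is false. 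Take $\mathcal L=[0,1]$ with the {\L}ukasiewicz operations and $\rsf F=\rsf H=\rsf G=[1/2,1]$ (a prime filter in a chain), $f=1/2$, $h=1$, so $f\otimes h=1/2\in\rsf G$. Then $f'=f\join\lnot h=1/2$, whereas $\rsf H\sqTo\rsf G=\{z\mid\forall h''\ge 1/2,\ z\otimes h''\ge 1/2\}=\{1\}$, so $f'\notin\rsf H\sqTo\rsf G$; concretely, for $h''=1/2$ both $h''\otimes f=0$ and $h''\otimes\lnot h=0$ fall outside $\rsf G$, so ``at least one of $h''\otimes f$ or $h''\otimes\lnot h$ lies in $\rsf G$'' fails even though $\rsf G$ is prime and the algebra is a chain. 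The underlying obstruction is that $\rsf G$ is only a lattice filter, not an implicative one, so $f\otimes h\in\rsf G$ gives no control over further products such as $f\otimes h''$ when $h''\le h$, and primeness alone does not rescue that case.

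As it stands, then, your argument only yields that the symmetric set $(\star)$ is contained in both sides, which says nothing about their equality; the hard direction remains open. If you want to keep the $(\star)$-route, fixing $h$ and adjusting $f$ is not enough: you must be prepared to move both coordinates, i.e.\ produce some pair $(f',h')$ with $f'\in\rsf F\cap(\rsf H\sqTo\rsf G)$, $h'\in\rsf H\cap\rsf G$ and $f'\otimes h'\le f\otimes h$ (in the example above $(f',h')=(1,1/2)$ works, while no pair with $h'=h=1$ does), and such an argument still has to be written down for general prime filters. Alternatively you can do what the paper does --- treat the unfolded condition as ranging over all of $\rsf F$ and $\rsf H$ after the reduction $\rsf F,\rsf H\subseteq\rsf G$ --- but since your whole point is that this step needs justification, that justification is exactly what is still missing from your proposal.
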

\begin{proof}
    We may assume that $\rsf F, \rsf H\subseteq\rsf G$. Then we see 
    that
    $x\in\rsf F\sqTo(\rsf H\sqTo\rsf G)$ iff $\forall f\in\rsf F\ 
    \forall h\in\rsf H\ (x\otimes f)\otimes h\in\rsf G$. As
    $\otimes$ is associative,  this is symmetric in $\rsf F$ and 
    $\rsf H$ and so we have equality.
\end{proof}

\subsection{Martinez' $\Phi$ function}\label{secsec:Phi}
In \cite{Martinez:two} Martinez defined the binary function on lattice filters
$$
\Phi(\rsf F, \rsf G)=\bigcup_{f\in\rsf F}\Set{y | f\to y\in\rsf G}.
$$
We note that this can be defined in terms as $\sqTo$ and 
$\bullet^{+}$ as $(\rsf F\sqTo\rsf G^{+})^{+}$, as we have
\begin{align*}
    z\in\Phi(\rsf F, \rsf G) & \text{ iff }\exists f\in\rsf F\ f\to 
    y=\lnot f\oplus y\in\rsf G  \\
     & \text{ iff }\lnot(\forall f\in\rsf F\ \lnot f\oplus y\notin\rsf G)  \\
     & \text{ iff }\lnot(\forall f\in\rsf F\ f\otimes\lnot y \in\rsf 
     G^{+})  \\
     & \text{ iff }\lnot(\lnot z\in\rsf F\sqTo\rsf G^{+})  \\
     & \text{ iff }\lnot( z\notin(\rsf F\sqTo\rsf G^{+})^{+})  \\
     & \text{ iff } z\in(\rsf F\sqTo\rsf G^{+})^{+}).
\end{align*}

We note that in any MV-algebra $\lnot(f\to\lnot g)= f\otimes g$ and 
so, in some sense, we have 
$$
\Phi(\rsf F, \rsf G)=\rsf F\otimes\rsf G.
$$

\section{Sensitivity to Kernels}
The definition of $\sqTo$ exhibits some interesting sensitivity to kernels of 
the filters involved as the next couple of results show us.

For a given implication filter $P$ we let 
$\eta_{P}\colon\mathcal L\to\mathcal L/P$
denote the canonical epimorphism. 
We usually omit the subscript when $P$ is clear from context.

\begin{defn}
    Let $\rsf F$ be any filter and $P$ any implication filter. Then
    $$
    J_{u}(\rsf F,  P)=\eta_{P}^{-1}[\rsf F/P].
    $$
\end{defn}

Clearly $\rsf F\subseteq J_{u}(\rsf F,  P)$ and $J_{u}$ is prime or 
trivial.
In \cite{mvpaperOne} we showed that if $P$ is prime and $J_{u}$ is 
nontrivial then 
$$
    \mathcal K(J_{u}(\rsf F,  P))=\mathcal K(\rsf F)\join P.
$$

\begin{prop}\label{prop:small}
    $J_{u}(\rsf F,  P)$ is the smallest filter $\rsf H$ satisfying:
    \begin{enumerate}[(a)]
        \item  $\rsf F\subseteq\rsf H$; 
    
        \item  $P\subseteq\mathcal K(\rsf H)$.
    \end{enumerate}
\end{prop}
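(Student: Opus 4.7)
The plan is to establish that $J_u(\rsf F, P)$ itself satisfies (a) and (b), and then show that any filter satisfying these two properties must contain $J_u(\rsf F, P)$. The minimality half is the real content; the other half is essentially by construction.

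The verification that $J_u(\rsf F, P)$ satisfies (a) and (b) is routine. Containment $\rsf F \subseteq J_u(\rsf F, P)$ is immediate from the definition, since $\eta_P(f) \in \rsf F/P$ for every $f \in \rsf F$. Property (b) follows from the previously cited identity $\mathcal K(J_u(\rsf F, P)) = \mathcal K(\rsf F) \vee P$ in the nontrivial case, and from $\mathcal K(\mathcal L) = \mathcal L \supseteq P$ in the degenerate case.

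The substance of the argument is showing that any $\rsf H$ satisfying (a) and (b) is already $\eta_P$-saturated, i.e.\ a union of $P$-cosets. For this I would fix $x \in \rsf H$ and $y \in \mathcal L$ with $\eta_P(x) = \eta_P(y)$, and aim to conclude $y \in \rsf H$. Since $x \to y \in P$ and $P \subseteq \mathcal K(\rsf H)$, the defining property of the kernel says: if $y$ were not in $\rsf H$, then $(x \to y) \to y \notin \rsf H$. The decisive observation is the MV-algebra identity $(x \to y) \to y = x \vee y$, which combined with $x \in \rsf H$ and upward closure gives $x \vee y \in \rsf H$, a contradiction. I expect this step, marrying the kernel condition to the MV-identity that expresses join via implication, to be the main obstacle and the place where the MV-algebra structure is actually used.

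Once saturation is established the conclusion is mechanical. From $\rsf F \subseteq \rsf H$ we get $\rsf F/P \subseteq \rsf H/P$, and saturation of $\rsf H$ gives $\rsf H = \eta_P^{-1}[\rsf H/P]$, hence
$$
J_u(\rsf F, P) = \eta_P^{-1}[\rsf F/P] \subseteq \eta_P^{-1}[\rsf H/P] = \rsf H,
$$
which is the desired minimality.
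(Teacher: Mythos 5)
Your proof is correct and follows the same route as the paper: pass to the quotient, use $\rsf F/P\subseteq\rsf H/P$, and pull back via $\eta_P^{-1}$, using that $P\subseteq\mathcal K(\rsf H)$ makes $\rsf H$ a union of $P$-cosets. The only difference is that you prove this saturation fact explicitly (via $x\to y\in P$ and the identity $(x\to y)\to y=x\join y$), whereas the paper simply invokes it as known, so your write-up is if anything more self-contained.
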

\begin{proof}
    Let $\rsf H$ have the two properties listed. Then 
    $\rsf F/P\subseteq\rsf H/P$ and so 
    $J_{u}(\rsf F, P)= \eta_{P}^{-1}[\rsf F/P]\subseteq \eta_{P}[\rsf 
    H/P]= \rsf H$ (as $P\subseteq\mathcal K(\rsf H)$).
\end{proof}

We also need the dual notion
$$
	J_{d}(\rsf F,  P)=\left(\eta_{P}^{-1}[\rsf F^{+}/P]\right)^{+}.
$$
As  $\rsf F^{+}\subseteq J_{u}(\rsf F^{+},  P)$ we have 
$J_{d}(\rsf F,  P)= J_{u}(\rsf F^{+},  P)^{+}\subseteq \rsf F$. 
However it may be empty. There are certain situations where $J_{d}$
produces nontrivial filters (which must be prime). 

\begin{lem}
    Let $\rsf F\subseteq \rsf G$ be two prime filters. Then 
    $$
	\rsf F\subseteq J_{d}(\rsf G, \mathcal K(\rsf F)).
    $$
\end{lem}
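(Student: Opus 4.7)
The plan is to unfold the definition of $J_{d}$ and reduce the containment to a statement about $J_{u}$, which we can then handle using the universal property in \propref{prop:small}. Specifically, since $J_{d}(\rsf G,\mathcal K(\rsf F))= J_{u}(\rsf G^{+},\mathcal K(\rsf F))^{+}$, the desired inclusion $\rsf F\subseteq J_{d}(\rsf G,\mathcal K(\rsf F))$ is equivalent to
$$
J_{u}(\rsf G^{+},\mathcal K(\rsf F))\subseteq\rsf F^{+},
$$
because $\bullet^{+}$ is order-reversing and involutive on prime filters (a quick check: $z\in\rsf F^{++}$ iff $\lnot z\notin\rsf F^{+}$ iff $\lnot\lnot z\in\rsf F$ iff $z\in\rsf F$). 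Here $J_{u}(\rsf G^{+},\mathcal K(\rsf F))$ is either prime or trivial, so this equivalence is legitimate, and one has to keep in mind that $\rsf F^{+}$ and $\rsf G^{+}$ are themselves prime since $\rsf F,\rsf G$ are proper prime filters.

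With this reduction in hand, I would invoke \propref{prop:small}: $J_{u}(\rsf G^{+},\mathcal K(\rsf F))$ is the \emph{smallest} filter $\rsf H$ satisfying $\rsf G^{+}\subseteq\rsf H$ and $\mathcal K(\rsf F)\subseteq\mathcal K(\rsf H)$. Thus to place $J_{u}(\rsf G^{+},\mathcal K(\rsf F))$ inside $\rsf F^{+}$ it suffices to check that $\rsf F^{+}$ itself has both properties:
\begin{enumerate}[(a)]
\item $\rsf G^{+}\subseteq\rsf F^{+}$;
\item $\mathcal K(\rsf F)\subseteq\mathcal K(\rsf F^{+})$.
\end{enumerate}

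Item (a) is immediate from the hypothesis $\rsf F\subseteq\rsf G$ together with the order-reversing nature of $\bullet^{+}$ just observed. For item (b) the key input is \propref{prop:SubAEq}: since $\rsf F$ is a proper prime filter we have $0\notin\rsf F$, and applying that proposition at $a=0$ gives $\mathcal K(\rsf F)=\mathcal K(\rsf F_{0})=\mathcal K(\rsf F^{+})$, so (b) holds with equality. Combining (a) and (b) with \propref{prop:small} gives the reduced inclusion and hence the lemma.

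The only real subtlety — and the place where I would slow down in writing the formal proof — is making sure the passage from $\rsf F\subseteq\rsf H^{+}$ to $\rsf H\subseteq\rsf F^{+}$ is justified for the specific $\rsf H=J_{u}(\rsf G^{+},\mathcal K(\rsf F))$ that arises here; this relies on $\bullet^{+}$ behaving as an involution, which only requires primeness (or at least properness plus $0\notin\rsf F$). Aside from that, everything is a clean application of the two cited results.
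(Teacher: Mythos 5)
Your proof is correct and is essentially the paper's own argument: both hinge on $\rsf G^{+}\subseteq\rsf F^{+}$ together with $\mathcal K(\rsf F^{+})=\mathcal K(\rsf F)$ (via \propref{prop:SubAEq}) to conclude $\eta_{\mathcal K(\rsf F)}^{-1}[\rsf G^{+}/\mathcal K(\rsf F)]\subseteq\rsf F^{+}$, and then flip the inclusion with $\bullet^{+}$. Your appeal to \propref{prop:small} simply packages the preimage computation that the paper carries out directly.
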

\begin{proof}
    Let $\rsf K=\mathcal K(\rsf F)$.
    
    $\rsf F\subseteq\rsf G$ implies $\rsf G^{+}\subseteq\rsf F^{+}$
    and so 
    $$
	\eta_{\rsf K}^{-1}[\rsf G^{+}/\rsf K]\subseteq\eta_{\rsf 
	K}^{-1}[\rsf F^{+}/\rsf K]=\rsf F^{+}
    $$
    as $\mathcal K(\rsf F^{+})=\mathcal K(\rsf F)$,  by \propref{prop:SubAEq}. 
    Hence 
    $\rsf F\subseteq  J_{d}(\rsf G, \mathcal K(\rsf F))$.
\end{proof}

\begin{prop}
    $J_{d}(\rsf F,  P)$ is the largest filter $\rsf H$ satisfying:
    \begin{enumerate}[(a)]
        \item  $\rsf H\subseteq\rsf F$; 
    
        \item  $P\subseteq\mathcal K(\rsf H)$.
    \end{enumerate}
\end{prop}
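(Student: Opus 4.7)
The plan is to dualize the proof of \propref{prop:small} by conjugating everything with $\bullet^{+}$. The key identities I will rely on are: (i) $\rsf H^{++}=\rsf H$ (from the involution $\lnot\lnot z=z$), (ii) the operation $\bullet^{+}$ reverses inclusion (since $\rsf A\subseteq\rsf B$ implies $\rsf B^{+}\subseteq\rsf A^{+}$, directly from the definition $\rsf F^{+}=\{z\mid\lnot z\notin\rsf F\}$), and (iii) $\mathcal K(\rsf H^{+})=\mathcal K(\rsf H)$, which is the $a=0$ case of \propref{prop:SubAEq}.

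First I would verify that $J_{d}(\rsf F,P)$ itself satisfies (a) and (b). Property (a) was already noted in the paragraph preceding the lemma, but can be re-derived: $\rsf F^{+}\subseteq J_{u}(\rsf F^{+},P)$ by \propref{prop:small}(a), and applying $\bullet^{+}$ yields $J_{d}(\rsf F,P)=J_{u}(\rsf F^{+},P)^{+}\subseteq\rsf F^{++}=\rsf F$. For (b), the kernel formula $\mathcal K(J_{u}(\rsf F^{+},P))=\mathcal K(\rsf F^{+})\vee P$ quoted from \cite{mvpaperOne} gives $P\subseteq\mathcal K(J_{u}(\rsf F^{+},P))$, and then identity (iii) above gives $P\subseteq\mathcal K(J_{u}(\rsf F^{+},P)^{+})=\mathcal K(J_{d}(\rsf F,P))$.

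For maximality, let $\rsf H$ be any filter satisfying (a) and (b). From (a), $\rsf H\subseteq\rsf F$ so $\rsf F^{+}\subseteq\rsf H^{+}$. From (b) together with (iii), $P\subseteq\mathcal K(\rsf H)=\mathcal K(\rsf H^{+})$. Thus $\rsf H^{+}$ is a filter containing $\rsf F^{+}$ with $P\subseteq\mathcal K(\rsf H^{+})$, so by \propref{prop:small} applied to the pair $(\rsf F^{+},P)$ we conclude $J_{u}(\rsf F^{+},P)\subseteq\rsf H^{+}$. Applying $\bullet^{+}$ once more (reversing inclusion) and using $\rsf H^{++}=\rsf H$, we obtain $\rsf H\subseteq J_{u}(\rsf F^{+},P)^{+}=J_{d}(\rsf F,P)$, as required.

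The main subtlety, and the only place I expect to pause, is the invocation of \propref{prop:SubAEq} at $a=0$: this requires $0\notin\rsf H$, i.e.\ that $\rsf H$ be proper. If $\rsf H$ is improper the conclusion $\rsf H\subseteq J_{d}(\rsf F,P)$ cannot hold in general (since $J_{d}$ may be empty), so I would either assume the characterization ranges over proper filters or handle the improper case separately by noting that $\rsf H\subseteq\rsf F$ already forces $\rsf H$ to be proper whenever $\rsf F$ is, which is the situation of interest. Everything else is a routine application of $\bullet^{+}$ as an order-reversing involution.
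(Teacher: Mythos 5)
Your argument is correct and is essentially the paper's own proof: the paper also reduces to \propref{prop:small} via the order-reversing involution $\bullet^{+}$, using $\rsf H\subseteq\rsf F$ iff $\rsf F^{+}\subseteq\rsf H^{+}$. You merely spell out the details the paper leaves implicit, namely $\rsf H^{++}=\rsf H$, the kernel invariance $\mathcal K(\rsf H)=\mathcal K(\rsf H^{+})$ from \propref{prop:SubAEq} at $a=0$, and the (reasonable) properness caveat.
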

\begin{proof}
    This follows from \propref{prop:small} as $\rsf H\subseteq\rsf F$ 
    iff $\rsf F^{+}\subseteq\rsf H^{+}$.
\end{proof}

\begin{prop}
    Let $\rsf F\subseteq\rsf G$ be two prime lattice filters. Then
    $$
	\rsf F\sqTo\rsf G=J_{u}(\rsf F,  \mathcal K(\rsf G))\sqTo\rsf 
	G.
    $$
\end{prop}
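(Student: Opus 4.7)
The plan is to prove the two inclusions separately, writing $P = \mathcal K(\rsf G)$ for brevity. First I would record the chain
$$
\rsf F \subseteq J_{u}(\rsf F, P) \subseteq \rsf G,
$$
where the left containment is the one noted right after the definition of $J_{u}$, and the right one follows since $P \subseteq \rsf G$ gives $\rsf G = \eta_{P}^{-1}[\rsf G/P]$, while $\rsf F/P \subseteq \rsf G/P$. Applying \propref{prop:revIncl} to this chain immediately delivers
$$
J_{u}(\rsf F, P) \sqTo \rsf G \subseteq \rsf F \sqTo \rsf G.
$$

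For the reverse inclusion I would fix $z \in \rsf F \sqTo \rsf G$ and $h \in J_{u}(\rsf F, P)$, aiming to show $z \otimes h \in \rsf G$; the first proposition of Section~2 then yields $z \in J_{u}(\rsf F, P) \sqTo \rsf G$. By definition of $J_{u}$ there exists $f \in \rsf F$ with $\eta_{P}(f) = \eta_{P}(h)$, i.e.\ $f \to h, h \to f \in P$. Since $P$ is an implication filter (hence the kernel of an MV-congruence), $\otimes$ respects the equivalence, so $(z \otimes f) \to (z \otimes h) \in P$. Moreover $z \otimes f \in \rsf G$ by the choice of $z$, so the entire argument is reduced to a single closure property of $\rsf G$.

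The main obstacle is exactly this last step: concluding $z \otimes h \in \rsf G$ from $z \otimes f \in \rsf G$ together with $(z \otimes f) \to (z \otimes h) \in \mathcal K(\rsf G)$. What is really needed is the general fact that $\rsf G$ is closed under ``modus ponens modulo $\mathcal K(\rsf G)$'', that is, $a \in \rsf G$ and $a \to b \in \mathcal K(\rsf G)$ force $b \in \rsf G$. I plan to establish this via the MV-identity $(a \to b) \to b = a \vee b$: if $b \notin \rsf G$ then the defining property of $\mathcal K(\rsf G)$ would give $(a \to b) \to b \notin \rsf G$, yet $a \in \rsf G$ and upward closure of the lattice filter $\rsf G$ force $a \vee b \in \rsf G$, a contradiction. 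Applied with $a = z \otimes f$ and $b = z \otimes h$ this closes the proof.
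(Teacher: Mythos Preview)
Your proof is correct and follows essentially the same route as the paper's: the right-to-left inclusion via $\rsf F\subseteq J_{u}(\rsf F,\mathcal K(\rsf G))$ and \propref{prop:revIncl}, and the left-to-right inclusion by picking $f\in\rsf F$ in the same $\mathcal K(\rsf G)$-coset as $h$ and using that $\rsf G$ is closed under $\sim_{\mathcal K(\rsf G)}$. You are simply more explicit than the paper, which asserts the closure of $\rsf G$ under $\sim_{\mathcal K(\rsf G)}$ without spelling out the $(a\to b)\to b=a\vee b$ argument, and which omits the verification that $J_{u}(\rsf F,\mathcal K(\rsf G))\subseteq\rsf G$.
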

\begin{proof}
    The right-to-left inclusion follows from $\rsf F\subseteq J=J_{u}(\rsf F,  \mathcal K(\rsf G))$.
    
    Conversely,  if $z\in \rsf F\sqTo\rsf G$ and $j\in J$ we want to 
    show that $z\otimes j\in\rsf G$. 
    
    There is some $f\in \rsf F$ with $f\sim j\mod\mathcal K(\rsf 
    G)$,  and so $z\otimes f\sim z\otimes j$. As $z\otimes f\in\rsf G$
    and $\rsf G$ is closed under $\sim_{\rsf K}$ we have $z\otimes 
    j\in\rsf G$. 
\end{proof}

\begin{cor}
    Let $\rsf F\subseteq\rsf G$ be two prime lattice filters. Then
    $$
	\rsf F\sqTo\rsf G=\rsf F\sqTo J_{d}(\rsf G,  \mathcal K(\rsf F)).
    $$
\end{cor}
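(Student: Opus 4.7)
The plan is to deduce this from the preceding proposition by dualizing via the $\bullet^{+}$ operation, using \propref{prop:plus} as the bridge between $\sqTo$ on the left-hand argument and on the right-hand argument. The key observation is that $J_{d}$ has been defined precisely as the $\bullet^{+}$-conjugate of $J_{u}$, namely $J_{d}(\rsf G, P) = J_{u}(\rsf G^{+}, P)^{+}$, so applying $\bullet^{+}$ throughout should turn a statement about replacing $\rsf F$ by $J_{u}(\rsf F, \mathcal K(\rsf G))$ on the left into a statement about replacing $\rsf G$ by $J_{d}(\rsf G, \mathcal K(\rsf F))$ on the right.

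Concretely I would proceed in three steps. First, since $\rsf F \subseteq \rsf G$, \propref{prop:plus} gives $\rsf F \sqTo \rsf G = \rsf G^{+} \sqTo \rsf F^{+}$, and now $\rsf G^{+} \subseteq \rsf F^{+}$ with both prime. Second, applying the preceding proposition to the pair $\rsf G^{+} \subseteq \rsf F^{+}$ and using \propref{prop:SubAEq} to replace $\mathcal K(\rsf F^{+})$ by $\mathcal K(\rsf F)$, I obtain
$$
\rsf G^{+} \sqTo \rsf F^{+} = J_{u}(\rsf G^{+}, \mathcal K(\rsf F)) \sqTo \rsf F^{+}.
$$
Third, I apply \propref{prop:plus} again (in the reverse direction) to the right-hand side; this requires noting $J_{u}(\rsf G^{+}, \mathcal K(\rsf F)) \subseteq \rsf F^{+}$, which follows because $\mathcal K(\rsf F) \subseteq \mathcal K(\rsf F^{+})$ and $\rsf F^{+}$ is prime. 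This yields
$$
J_{u}(\rsf G^{+}, \mathcal K(\rsf F)) \sqTo \rsf F^{+} = \rsf F \sqTo J_{u}(\rsf G^{+}, \mathcal K(\rsf F))^{+} = \rsf F \sqTo J_{d}(\rsf G, \mathcal K(\rsf F)),
$$
with the last equality being the definition of $J_{d}$.

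I do not anticipate any serious obstacle; the content of the corollary really is just the dual of the previous proposition under $\bullet^{+}$, and the only points requiring care are the bookkeeping on which filter is contained in which (to ensure \propref{prop:plus} applies each time) and the invariance $\mathcal K(\rsf F^{+}) = \mathcal K(\rsf F)$ that lets $\mathcal K(\rsf F)$ survive the two $\bullet^{+}$ passes unchanged.
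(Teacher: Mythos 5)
Your proposal is correct and takes essentially the same route as the paper: flip with \propref{prop:plus} to get $\rsf G^{+}\sqTo\rsf F^{+}$, apply the preceding proposition to the pair $\rsf G^{+}\subseteq\rsf F^{+}$, flip back with \propref{prop:plus}, and finish with $\mathcal K(\rsf F^{+})=\mathcal K(\rsf F)$ and the definition $J_{d}(\rsf G,P)=J_{u}(\rsf G^{+},P)^{+}$. Your extra remark verifying $J_{u}(\rsf G^{+},\mathcal K(\rsf F))\subseteq\rsf F^{+}$ (which the paper leaves implicit, having shown it in the preceding lemma) is sound bookkeeping, not a deviation.
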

\begin{proof}
    \begin{align*}
        \rsf F\sqTo\rsf G & =\rsf G^{+}\sqTo\rsf F^{+}  \\
         & = J_{u}(\rsf G^{+}, \mathcal K(\rsf F^{+}))\sqTo\rsf F^{+}  \\
         & = \rsf F\sqTo J_{u}(\rsf G^{+}, \mathcal K(\rsf 
	 F^{+}))^{+}\\
	 & = \rsf F\sqTo J_{d}(\rsf G,  \mathcal K(\rsf F))
    \end{align*}
    as $\mathcal K(\rsf F)=\mathcal K(\rsf F^{+})$.
\end{proof}

\begin{thm}
    Let $\rsf F\subseteq\rsf G$ be two prime lattice filters. Then
    $$
	\rsf F\sqTo\rsf G=J_{u}(\rsf F,  \mathcal K(\rsf G)) \sqTo J_{d}(\rsf G,  \mathcal K(\rsf F)).
    $$
\end{thm}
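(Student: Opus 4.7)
My plan is to chain the two preceding results. First I would apply the proposition to rewrite
\[
\rsf F\sqTo\rsf G \;=\; J_u(\rsf F,\mathcal K(\rsf G))\sqTo\rsf G,
\]
and then, writing $\rsf F' = J_u(\rsf F,\mathcal K(\rsf G))$ (still a prime filter contained in $\rsf G$), apply the corollary to $\rsf F'\sqTo\rsf G$ to obtain
\[
\rsf F'\sqTo\rsf G \;=\; \rsf F'\sqTo J_d(\rsf G,\mathcal K(\rsf F')).
\]
The identity $\mathcal K(J_u(\rsf F,P)) = \mathcal K(\rsf F)\join P$ recorded in the text just before \propref{prop:small} gives $\mathcal K(\rsf F') = \mathcal K(\rsf F)\join\mathcal K(\rsf G)$, so altogether
\[
\rsf F\sqTo\rsf G \;=\; J_u(\rsf F,\mathcal K(\rsf G))\sqTo J_d\bigl(\rsf G,\mathcal K(\rsf F)\join\mathcal K(\rsf G)\bigr).
\]

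All that remains, and I expect this to be the main obstacle, is the identity
\[
J_d\bigl(\rsf G,\mathcal K(\rsf F)\join\mathcal K(\rsf G)\bigr) \;=\; J_d(\rsf G,\mathcal K(\rsf F)).
\]
The $\subseteq$ direction is immediate from the extremal characterisation of $J_d$ in the proposition just above, which makes $J_d$ antitone in its second argument. For $\supseteq$, I would dualise $\mathcal K(J_u(\rsf F,P)) = \mathcal K(\rsf F)\join P$: using the definition $J_d(\rsf G,P) = J_u(\rsf G^+,P)^+$ together with $\mathcal K(\rsf G^+) = \mathcal K(\rsf G)$ (a special case of \propref{prop:SubAEq}), I obtain $\mathcal K\bigl(J_d(\rsf G,\mathcal K(\rsf F))\bigr) = \mathcal K(\rsf G)\join\mathcal K(\rsf F)$. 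Hence $J_d(\rsf G,\mathcal K(\rsf F))$ is itself a filter contained in $\rsf G$ whose kernel contains $\mathcal K(\rsf F)\join\mathcal K(\rsf G)$, and by the extremal property it is contained in $J_d(\rsf G,\mathcal K(\rsf F)\join\mathcal K(\rsf G))$, completing the argument.
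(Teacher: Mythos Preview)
Your argument is correct and follows essentially the same route as the paper: chain the proposition and the corollary, compute $\mathcal K(J_u(\rsf F,\mathcal K(\rsf G)))=\mathcal K(\rsf F)\join\mathcal K(\rsf G)$, and then reduce $J_d(\rsf G,\mathcal K(\rsf F)\join\mathcal K(\rsf G))$ to $J_d(\rsf G,\mathcal K(\rsf F))$ via the kernel identity $\mathcal K(J_d(\rsf G,\mathcal K(\rsf F)))=\mathcal K(\rsf G)\join\mathcal K(\rsf F)$. The paper states this last reduction in one line, whereas you spell out both inclusions using the extremal characterisation of $J_d$; this is a welcome elaboration rather than a different method.
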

\begin{proof}
    We note that $\mathcal K(J_{d}(\rsf G, \mathcal K(\rsf 
    F)))=\mathcal K(\rsf G)\join\mathcal K(\rsf F)$ and so 
    $$
	J_{d}(\rsf G, \mathcal K(\rsf F))= J_{d}(\rsf G,  \mathcal 
	K(\rsf G)\join\mathcal K(\rsf F)).
    $$
    Likewise
    $$
	J_{u}(\rsf F, \mathcal K(\rsf G))= J_{u}(\rsf F,  \mathcal 
	K(\rsf G)\join\mathcal K(\rsf F)).
    $$
    Hence we have 
    \begin{align*}
        \rsf F\sqTo\rsf G & = J_{u}(\rsf F,  \mathcal K(\rsf 
	G))\sqTo\rsf G  \\
         & = J_{u}(\rsf F,  \mathcal K(\rsf G))\sqTo J_{d}(\rsf G,  
	 \mathcal K(J_{u}(\rsf F,  \mathcal K(\rsf G))))  \\
         & = J_{u}(\rsf F,  \mathcal K(\rsf G))\sqTo J_{d}(\rsf G,  
	 \mathcal K(\rsf F)\join\mathcal K(\rsf G))\\
	 &= J_{u}(\rsf F,  \mathcal K(\rsf G))\sqTo J_{d}(\rsf G,  
	 \mathcal K(\rsf F)).
    \end{align*}
\end{proof}

From this theorem we see that we can largely reduce our study of 
$\sqTo$ to the case when the 
two filters involved have the same kernel. 

The next step is to compute the kernel of $\rsf F\sqTo\rsf G$ when 
$\rsf F$ and $\rsf G$ have the same kernel. We begin this by 
considering the interaction between $\sqTo$ and quotients.

\section{Interaction with quotients}
\begin{prop}
    Let $\rsf F\subseteq\rsf G$ be two lattice filters,  and $Q$ any 
    implication filter with $Q\subseteq\mathcal 
    K(\rsf G)$. Then
    $$
	(\rsf F\sqTo\rsf G)/Q= (\rsf F/Q)\sqTo(\rsf G/Q).
    $$
\end{prop}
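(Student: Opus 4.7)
The plan is to prove this by elementwise double inclusion, exploiting the characterisation $\rsf F\sqTo\rsf G=\Set{z | \forall f\in\rsf F\ f\otimes z\in\rsf G}$ established earlier, and the fact that $\otimes$ descends to the quotient $\mathcal L/Q$.

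The first observation to make is that the hypothesis $Q\subseteq\mathcal K(\rsf G)$ is exactly what ensures that $\rsf G$ is saturated under $Q$-equivalence, i.e.\ $\eta_{Q}^{-1}[\rsf G/Q]=\rsf G$. Equivalently, for any $g\in\mathcal L$ we have $g\in\rsf G$ iff $\eta_{Q}(g)\in\rsf G/Q$. I would state this at the start so it can be invoked freely. Note that I do \emph{not} assume $Q\subseteq\mathcal K(\rsf F)$; the definition of $\rsf F/Q$ as $\eta_{Q}[\rsf F]$ still makes sense without that, and the proof below works regardless.

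For the forward inclusion $(\rsf F\sqTo\rsf G)/Q\subseteq(\rsf F/Q)\sqTo(\rsf G/Q)$: take $z\in\rsf F\sqTo\rsf G$ and an arbitrary element of $\rsf F/Q$, which by definition has the form $\eta_{Q}(f_{0})$ for some $f_{0}\in\rsf F$. By the characterisation, $f_{0}\otimes z\in\rsf G$, and applying $\eta_{Q}$ gives $\eta_{Q}(f_{0})\otimes\eta_{Q}(z)\in\rsf G/Q$. Hence $\eta_{Q}(z)\in(\rsf F/Q)\sqTo(\rsf G/Q)$.

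For the reverse inclusion, suppose $\eta_{Q}(z)\in(\rsf F/Q)\sqTo(\rsf G/Q)$. For any $f\in\rsf F$ we have $\eta_{Q}(f)\in\rsf F/Q$, so $\eta_{Q}(f\otimes z)=\eta_{Q}(f)\otimes\eta_{Q}(z)\in\rsf G/Q$. By the saturation observation, $f\otimes z\in\rsf G$. Since $f$ was arbitrary, $z\in\rsf F\sqTo\rsf G$, so $\eta_{Q}(z)\in(\rsf F\sqTo\rsf G)/Q$. There is essentially no obstacle here beyond careful bookkeeping with representatives; the whole argument really hinges on the single fact that $\rsf G$ is $Q$-saturated, which is delivered immediately by the hypothesis $Q\subseteq\mathcal K(\rsf G)$.
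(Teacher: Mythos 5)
Your proof is correct and follows essentially the same route as the paper: both directions are handled by the characterisation $\rsf F\sqTo\rsf G=\Set{z \mid \forall f\in\rsf F\ f\otimes z\in\rsf G}$, pushing forward through $\eta_{Q}$ one way and using the $Q$-saturation of $\rsf G$ (i.e.\ $Q\subseteq\mathcal K(\rsf G)$ means $\rsf G$ is closed under $Q$-cosets) the other way. Your explicit remark that $Q\subseteq\mathcal K(\rsf F)$ is not needed matches the paper's hypotheses exactly.
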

\begin{proof}
    Let $[z]\in(\rsf F\sqTo\rsf G)/Q$ with $z\in\rsf F\sqTo\rsf G$. 
    Then for any $[y]\in\rsf F/Q$ we can assume that $y\in\rsf F$ and
    so $z\otimes y\in\rsf G$. Thus $[z]\otimes[y]= [z\otimes 
    y]\in\rsf G/Q$.
    
    Conversely, if $[z]\in (\rsf F/Q)\sqTo(\rsf G/Q)$ and $y\in\rsf 
    F$,  then $[y]\in\rsf F/Q$ and so $[z\otimes y]\in\rsf G/Q$. 
    Since $Q\subseteq\mathcal K(\rsf G)$ we know that $\rsf G$ is 
    closed under $Q$-cosets and so $z\otimes y\in\rsf G$. 
\end{proof}


\begin{prop}\label{prop:quot}
    $\rsf F\subseteq\rsf G$ be two prime lattice filters,  and $Q$ any 
    implication filter with $Q\subseteq\mathcal 
    K(\rsf G)=\mathcal K(\rsf F)$. Then
    $$
    	\eta_{Q}^{-1}[\rsf F/Q\sqTo\rsf G/Q]=\rsf F\sqTo\rsf G.
    $$
\end{prop}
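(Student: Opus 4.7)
The plan is to reduce this immediately to the previous proposition and then bootstrap by a congruence argument. The preceding proposition gives
$$
  (\rsf F \sqTo \rsf G)/Q = (\rsf F/Q) \sqTo (\rsf G/Q),
$$
so taking $\eta_Q^{-1}$ of both sides yields
$$
  \eta_Q^{-1}[(\rsf F \sqTo \rsf G)/Q] = \eta_Q^{-1}[(\rsf F/Q) \sqTo (\rsf G/Q)].
$$
Thus the whole statement collapses to showing that the left-hand side here equals $\rsf F \sqTo \rsf G$, i.e., that $\rsf F \sqTo \rsf G$ is already closed under $\sim_Q$-cosets.

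The key step is therefore to verify $Q \subseteq \mathcal{K}(\rsf F \sqTo \rsf G)$. For this I would pick $z \in \rsf F \sqTo \rsf G$ and $z'$ with $z \sim_Q z'$, and show $z' \in \rsf F \sqTo \rsf G$ using the characterisation from the first proposition of Section 2: for each $f \in \rsf F$ we have $z \otimes f \in \rsf G$; since $Q$ is an implication filter, $\sim_Q$ is a congruence for $\otimes$, so $z \otimes f \sim_Q z' \otimes f$; and since $Q \subseteq \mathcal{K}(\rsf G)$, the filter $\rsf G$ is closed under $\sim_Q$, forcing $z' \otimes f \in \rsf G$. This gives $z' \in \rsf F \sqTo \rsf G$, hence the closure we need.

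Once that closure is established, the equality $\eta_Q^{-1}[(\rsf F \sqTo \rsf G)/Q] = \rsf F \sqTo \rsf G$ is automatic: the inclusion $\supseteq$ is trivial, and for $\subseteq$, any $z$ in the preimage satisfies $[z] = [z']$ for some $z' \in \rsf F \sqTo \rsf G$, so $z \sim_Q z'$ and hence $z \in \rsf F \sqTo \rsf G$ by what we just proved. Combining with the identity obtained from the previous proposition gives the desired equality.

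I do not expect a genuine obstacle here; the only real content is the congruence verification, which is routine given the hypothesis $Q \subseteq \mathcal{K}(\rsf G)$ together with the explicit $\otimes$-characterisation of $\sqTo$. In fact, one could alternatively prove the equality directly by unpacking $z \in \rsf F \sqTo \rsf G$ and $z \in \eta_Q^{-1}[(\rsf F/Q) \sqTo (\rsf G/Q)]$ via that same characterisation, avoiding explicit appeal to the preceding proposition; but leveraging the preceding proposition keeps the argument as short as possible.
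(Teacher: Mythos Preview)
Your argument is correct. The paper, however, takes the direct route you mention at the end: it verifies both inclusions by element-chasing the definition of $\sqTo$ in the form $z\in\rsf F\sqTo\rsf G\iff\forall a\notin\rsf G\ z\to a\notin\rsf F$, using that $Q\subseteq\mathcal K(\rsf F)$ and $Q\subseteq\mathcal K(\rsf G)$ to pass freely between $h\notin\rsf G$ and $[h]_Q\notin\rsf G/Q$, and between $z\to h\notin\rsf F$ and $[z\to h]_Q\notin\rsf F/Q$.

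Your route instead leans on the preceding proposition and proves, as its key step, that $\rsf F\sqTo\rsf G$ is $Q$-saturated via the $\otimes$-characterisation. This effectively establishes the paper's \emph{subsequent} corollary ($\mathcal K(\rsf F)\subseteq\mathcal K(\rsf F\sqTo\rsf G)$) first and uses it to deduce the present proposition, inverting the paper's order of dependence. A small bonus of your argument is that the closure step visibly needs only $Q\subseteq\mathcal K(\rsf G)$, so in fact the full hypothesis $\mathcal K(\rsf F)=\mathcal K(\rsf G)$ is not required; the paper's direct chase does invoke $Q\subseteq\mathcal K(\rsf F)$ at one point.
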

\begin{proof}
    Note that $\eta^{-1}[\rsf F/Q]=\rsf F$ and 
    $\eta^{-1}[\rsf G/Q]=\rsf G$ as $Q\subseteq\mathcal K(\rsf 
    F)=\mathcal K(\rsf G)$.
    
    Let $z\in\eta^{-1}[\rsf F/Q\sqTo\rsf G/Q]$, so that
    $[z]_{Q}\in \rsf F/Q\sqTo\rsf G/Q$. Choose $h\notin\rsf G$. As
    $Q\subseteq\mathcal K(\rsf G)$ this implies $[h]_{Q}\notin\rsf G/Q$
    and so $[z]_{Q}\to[h]_{Q}=[z\to h]_{Q}\notin\rsf F/Q$. As
    $Q\subseteq\mathcal K(\rsf F)$ we have $z\to h\notin\rsf F$.
    Hence $z\in\rsf F\sqTo\rsf G$.
    
    Conversely,  if $z\in\rsf F\sqTo\rsf G$ and $[h]_{Q}\notin\rsf 
    G/Q$ then $h\notin\rsf G$,  so that $z\to h\notin\rsf F$. Hence 
    $[z]_{Q}\to[h]_{Q}=[z\to h]_{Q}\notin\rsf F/Q$. Thus $[z]_{Q}\in 
    \rsf F/Q\sqTo\rsf G/Q$.
\end{proof}

\begin{cor}
    $\rsf F\subseteq\rsf G$ be two prime lattice filters with 
    $\mathcal K(\rsf G)=\mathcal K(\rsf F)$. Then
    $$
    	\mathcal K(\rsf F)\subseteq\mathcal K(\rsf F\sqTo\rsf G).
    $$
\end{cor}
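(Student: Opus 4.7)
The plan is to read this off directly from \propref{prop:quot}. Set $Q=\mathcal K(\rsf F)=\mathcal K(\rsf G)$, so the hypotheses of that proposition are in force, and it gives
$$
\rsf F\sqTo\rsf G \;=\; \eta_{Q}^{-1}[\rsf F/Q\sqTo\rsf G/Q].
$$
The right-hand side is, by construction, a preimage under $\eta_{Q}$ of a subset of $\mathcal L/Q$, hence it is saturated under the congruence $\sim_{Q}$ induced by the implication filter $Q$; that is, $\rsf F\sqTo\rsf G$ is a union of $Q$-cosets.

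Next I would unpack what saturation gives us in terms of $\mathcal K$. For any $z\in Q$ we have $[z]_{Q}=[\one]_{Q}$ in $\mathcal L/Q$, so for every $a\in\mathcal L$,
$$
[z\to a]_{Q}\;=\;[z]_{Q}\to[a]_{Q}\;=\;[\one]_{Q}\to[a]_{Q}\;=\;[a]_{Q},
$$
which means $z\to a\sim_{Q}a$. Combined with the saturation of $\rsf F\sqTo\rsf G$, this yields: whenever $z\in Q$ and $a\notin\rsf F\sqTo\rsf G$, also $z\to a\notin\rsf F\sqTo\rsf G$. By definition this is exactly $z\in\mathcal K(\rsf F\sqTo\rsf G)$, so $Q\subseteq\mathcal K(\rsf F\sqTo\rsf G)$, as required.

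There is no real obstacle here; the content of the corollary is already encoded in \propref{prop:quot}, and the only added ingredient is the standard equivalence between ``$P\subseteq\mathcal K(\rsf H)$'' and ``$\rsf H$ is a union of $P$-cosets'', applied in the easy direction. If one preferred to avoid citing that equivalence, the same argument can be run purely in terms of elements $z\in Q$ and $a\notin\rsf F\sqTo\rsf G$ by using $z\to a\sim_{Q}a$ directly, which needs only that $Q$ is an implication filter contained in the kernel of the relevant filter.
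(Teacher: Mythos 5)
Your proof is correct and follows essentially the same route as the paper: both rest on \propref{prop:quot} to write $\rsf F\sqTo\rsf G$ as an $\eta_{Q}$-preimage and then invoke the equivalence between $Q\subseteq\mathcal K(\rsf H)$ and $\rsf H$ being a union of $Q$-cosets. You merely verify the needed direction of that equivalence by hand (via $z\to a\sim_{Q}a$ for $z\in Q$) rather than quoting it, which is a harmless elaboration.
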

\begin{proof}
    We know that $P\subseteq\mathcal K(\rsf H)$ iff 
    $\eta_{P}^{-1}[\rsf H/P]=\rsf H$ for any filter $\rsf H$.
    Let $\rsf K=\mathcal K(\rsf F)$. Then   
    from above we have 
    $
    \rsf F\sqTo\rsf G= \\
    \eta_{\rsf K}^{-1}[\rsf F/\rsf K\sqTo\rsf 
    G/\rsf K]= \eta_{\rsf K}^{-1}[(\rsf F\sqTo\rsf G)/\rsf K].
    $
\end{proof}

\section{The kernel of $\rsf F\sqTo\rsf G$}

\begin{thm}
	Let $\rsf F\subseteq\rsf G$ be two prime filters with $\mathcal K(\rsf F)=\mathcal K(\rsf G)$.
	Then
	$$
		\mathcal K(\rsf F\sqTo\rsf G)=\mathcal K(\rsf F).
	$$
\end{thm}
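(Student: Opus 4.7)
The inclusion $\mathcal K(\rsf F)\subseteq\mathcal K(\rsf F\sqTo\rsf G)$ is the corollary to \propref{prop:quot}, so the work is to establish the reverse inclusion. My plan is to reduce to the case where the ambient MV-algebra is linearly ordered, by quotienting by $\rsf K := \mathcal K(\rsf F)=\mathcal K(\rsf G)$.

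Since $\rsf K$ is a prime implication filter, $\mathcal L/\rsf K$ is an MV-chain. By \propref{prop:quot}, $(\rsf F\sqTo\rsf G)/\rsf K=(\rsf F/\rsf K)\sqTo(\rsf G/\rsf K)$; the inclusion $\rsf K\subseteq\mathcal K(\rsf F\sqTo\rsf G)$ from the corollary lets us identify $\mathcal K((\rsf F\sqTo\rsf G)/\rsf K)$ with $\mathcal K(\rsf F\sqTo\rsf G)/\rsf K$, and similarly $\mathcal K(\rsf F/\rsf K)=\mathcal K(\rsf G/\rsf K)=\{[1]\}$. It therefore suffices to prove the linearly ordered statement: if $\rsf F'\subseteq\rsf G'$ are prime lattice filters in a linearly ordered MV-algebra with $\mathcal K(\rsf F')=\mathcal K(\rsf G')=\{1\}$, then $\mathcal K(\rsf F'\sqTo\rsf G')=\{1\}$.

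For that reduced statement I would argue by contradiction. Suppose $P:=\mathcal K(\rsf F'\sqTo\rsf G')\supsetneq\{1\}$. Since $P$ is not contained in the kernels of either $\rsf F'$ or $\rsf G'$, neither filter is closed under $P$-cosets, so some $P$-coset $T$ straddles $\rsf F'$ (contains elements both in and not in $\rsf F'$) and some $P$-coset $S$ straddles $\rsf G'$. Choosing $a\in T\cap\rsf F'$, $a'\in T\setminus\rsf F'$, $b\in S\setminus\rsf G'$, $b'\in S\cap\rsf G'$ and using the MV-identity $(x\to y)\to y=x\join y$ together with the linear order, I expect to verify that $a\to b\notin\rsf F'\sqTo\rsf G'$ (witnessed by $b$, since $(a\to b)\to b=a\join b=a\in\rsf F'$ while $b\notin\rsf G'$), whereas $a'\to b'\in\rsf F'\sqTo\rsf G'$ (splitting on whether $a'\le b'$). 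Since $[a\to b]_P=[a'\to b']_P$, the filter $\rsf F'\sqTo\rsf G'$ fails to be closed under $P$-cosets, contradicting $P\subseteq\mathcal K(\rsf F'\sqTo\rsf G')$.

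The main obstacle is the case analysis for $a'\to b'$, specifically when $a'>b'$: one must show that for \emph{every} $\gamma\notin\rsf G'$ the element $(a'\to b')\to\gamma$ lies outside $\rsf F'$. This combines $\gamma<b'$ (from linearity together with $b'\in\rsf G'$ and upward closure), monotonicity of $\to$ in its second argument, and $(a'\to b')\to b'=a'\join b'=a'\notin\rsf F'$, together with the upward closure of $\rsf F'$ to propagate the conclusion. The rest of the argument is bookkeeping, but it depends crucially on the reduction via \propref{prop:quot} which lets us work inside a chain.
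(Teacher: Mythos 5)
Your proof is correct and takes essentially the same route as the paper: the paper's argument likewise picks $P$-cosets $T$ and $S$ straddling $\rsf F$ and $\rsf G$ and shows $a\to b\notin\rsf F\sqTo\rsf G$ while $a'\to b'\in\rsf F\sqTo\rsf G$, the only difference being that it runs this in $\mathcal L$ using the order $<_{\rsf K}$ modulo the kernel (for every implication filter $P\supsetneq\rsf K$) instead of first passing to the chain $\mathcal L/\rsf K$ via \propref{prop:quot}, a reduction the paper itself endorses right after the theorem. The identification $\mathcal K(\rsf H/\rsf K)=\mathcal K(\rsf H)/\rsf K$ for filters $\rsf H$ with $\rsf K\subseteq\mathcal K(\rsf H)$, which you use but do not prove, is routine and does not affect correctness.
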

\begin{proof}
	Let $\rsf K=\mathcal K(\rsf F)$.
	
	Let $\rsf K\subsetneq P$ for some implication filter $P$. We show that
	 $\mathcal K(\rsf F\sqTo\rsf G)$ is properly contained in 
	 $P$. As we already know that $\mathcal K(\rsf 
	 F)\subseteq\mathcal K(\rsf F\sqTo\rsf G)$ this gives the 
	 result.
	 
	 As $P$ properly extends $\rsf K$ there
	 are $P$-cosets $S,T$ such that
	 \begin{align*}
		T\cap\rsf F\not=\emptyset &\not=T\setminus\rsf F\\
		\text{ and }
		S\cap\rsf G\not=\emptyset &\not=S\setminus\rsf G. 
	 \end{align*} 
	 Pick $b\in S\setminus\rsf G$ and $a\in T\cap\rsf F$. Then
	 $a\to b\notin\rsf F\sqTo\rsf G$ as $b\notin\rsf G$ but 
	 $(a\to b)\to b= a\join b\in\rsf F$. 
	 
	 Now pick $b'\in S\cap\rsf G$ and $a'\in T\setminus\rsf F$. 
	 
	 Now either $b'<_{\rsf K}a'$ or $a'<_{\rsf K}b'$. 
	 
	 In the former case we consider 
	 that any $c<_{\rsf K}\rsf G$ then $c<_{\rsf K} b'$ and we have 
	 $(a'\to b')\to c'<_{\rsf K} (a'\to b')\to b'= a'\join b'\sim_{\rsf K}a'\notin\rsf F$.
	 Hence $(a'\to b')\to c\notin\rsf F$ and so $a'\to b'\in\rsf F\sqTo\rsf G$.
	 
	 As $a\to b$ and $a'\to b'$ are in the same $P$-coset we see that $\rsf F\sqTo\rsf G$
	 is not closed under $\sim_P$.
	 
	 Now the other case -- we are forced to take $a'<_{\rsf 
	 K}b'$. Therefore
	 $b'=a'\join b'>a'$ as $a'<_{\rsf K} b'$.
	 Then we get $a'\to b'=1\in\rsf F\sqTo\rsf G$ and see that we have a $P$-coset that
	 is both in and out of $\rsf F\sqTo\rsf G$. 
\end{proof}

Because this always works we may assume that $\rsf K=\Set1$ and we are working in
a linearly ordered MV-algebra. This simplifies things a lot. However 
the case where $\mathcal L$ is discrete is very easy to understand.
Before we consider this case we need some technical results on 
convexity.  

\section{Convexity}
In what follows we often need to find an element of the algebra with 
certain order properties that also satisfies some equation. In order to do 
this we often appeal to a convexity argument that works provided we 
know suitable bounds and that sets of interest are convex. To that 
end we have the following technical facts.

\begin{lem}
    If $C$ is a convex set and $a\in\mathcal L$ then $\Set{z\to a | 
    z\in C}$ is convex.
\end{lem}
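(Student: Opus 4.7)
The plan is to exhibit, for any $y$ between two elements $y_1\le y_2$ of the image, a preimage $z\in C$ by writing down the canonical candidate $z:=y\to a$ and then arguing via the order-convexity of $C$ that $z$ lies in $C$. The key MV-identity to use is $(y\to a)\to a=y\vee a$, which immediately gives $z\to a=y$ as soon as $y\ge a$.

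First I would set up the problem: let $y_1\le y\le y_2$ with $y_i=z_i\to a$ and $z_i\in C$. Because $z\to a\ge a$ always holds, each $y_i\ge a$ and hence $y\ge a$. Defining $z:=y\to a$ and applying the identity above gives $z\to a=y\vee a=y$, so the only remaining task is to locate $z$ inside $C$.

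Next I would bound $z$ using antitonicity of $\cdot\to a$. From $y\le y_2=z_2\to a$ I obtain
$$
z = y\to a \ge (z_2\to a)\to a = z_2\vee a\ge z_2,
$$
and from $y\ge y_1=z_1\to a$ I get $z\le z_1\vee a$. Because we have already reduced (at the end of the previous section) to a linearly ordered MV-algebra, the upper bound collapses to $z\le z_1$ whenever $z_1\ge a$; antitonicity of $\cdot\to a$ on $[a,1]$ together with $y_1\le y_2$ then also forces $z_2\le z_1$, so $z\in[z_2,z_1]\subseteq C$ by order-convexity.

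The main obstacle is the boundary bookkeeping where $z_1$ or $z_2$ falls below $a$. On $\{z : z\le a\}$ the map $\cdot\to a$ is constantly $1$, so if $z_1\le a$ then $y_1=1$ forces $y=y_1=y_2=1$ and one handles this degenerate configuration by taking $z=z_1$ directly; when $z_2\le a$ the lower bound $z\ge z_2\vee a=a\ge z_2$ still suffices. Linearity of $\mathcal L$ is essential at exactly this step: without it the inequality $z\le z_1\vee a$ would not entail $z\le z_1$, and the interval $[z_2,z_1]$ need not make sense as a subset of $C$.
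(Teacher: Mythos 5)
Your proof is correct and follows essentially the same route as the paper: take $y$ between two image points, use the identity $(y\to a)\to a=y\vee a$ and antitonicity of $\cdot\to a$ to trap the candidate preimage $y\to a$ between elements of $C$, and invoke order-convexity (with linearity handling the boundary cases where elements of $C$ fall below $a$). The only difference is cosmetic: the paper organizes the same argument as a trichotomy on the position of $a$ relative to $C$, while you split on whether $z_1\ge a$, which covers the same degenerate cases.
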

\begin{proof}
    There are three 
    cases:
    \begin{enumerate}[{Case }1:]
	\item  $C<a$ in which case the set contains only $1$.
    
	\item  $a<C$. Let $p\to a< y < q\to a$ for some $p, q\in C$. 
	Then $q=(q\to a)\to a\le y\to a\le p=(p\to a)\to a$ and so 
	$y\to a\in C$. Hence $y=(y\to a)\to a$.
    
	\item  $a\in C$. So let $p>a$ and $p\to a< y< 1=a\to a$ -- 
	any other possibility is like the last case. 
	Then 
	$1\to a=a \le y\to a\le (p\to a)\to a=p$ so again $y\to a\in C$. 
	If $y\to a=1$ then $a\le p\to a< y\le a$ which is impossible.
	
	Hence $y=(y\to a)\to a$ is in the set.
    \end{enumerate}    
\end{proof}

The next lemma is immediate. 
\begin{lem}
    If $C$ is a convex set and $a\in\mathcal L$ then $\Set{\lnot z | 
    z\in C}$ is convex.
\end{lem}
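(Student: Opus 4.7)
The plan is to reduce the statement to the order-reversing involutive nature of negation in an MV-algebra. Since $\lnot$ is an order-reversing bijection on $\mathcal L$ satisfying $\lnot\lnot z = z$, it maps convex sets to convex sets. So my proof would consist of unwinding this observation explicitly.

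Concretely, I would pick an arbitrary element $y \in \mathcal L$ lying between two elements of $\lnot C = \{\lnot z \mid z \in C\}$, say $\lnot p \le y \le \lnot q$ with $p, q \in C$. Applying $\lnot$ to the chain (and using that $\lnot$ reverses order) gives $q \le \lnot y \le p$. By convexity of $C$ together with $p, q \in C$, we conclude $\lnot y \in C$. Then $y = \lnot\lnot y \in \lnot C$, which is exactly the required convexity of $\lnot C$.

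The only point of care is making sure both orientations of the inequalities are handled symmetrically, that is, the case when it is $q$ rather than $p$ on the left-hand side; but since one can just relabel, there is no real obstacle. Indeed, as the author already remarks, the lemma is immediate once one notes the involutive and order-reversing properties of $\lnot$, and no use of the previous lemma or of any deeper MV-algebra identities is required.
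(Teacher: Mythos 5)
Your argument is correct: it spells out exactly the order-reversing, involutive property of $\lnot$ that the paper invokes when it declares this lemma ``immediate'' and gives no written proof. Nothing further is needed.
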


\begin{lem}
    If $C$ is a convex set and $a\in\mathcal L$ then $\Set{z\otimes a | 
    z\in C}$ is convex.
\end{lem}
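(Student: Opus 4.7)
The plan is to reduce this directly to the two preceding lemmas using the MV-algebra identity $z\otimes a = \lnot(z\to\lnot a)$, so that no new case analysis is required.

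First I would rewrite the set: for every $z\in C$ we have
$$
z\otimes a = \lnot(z\to\lnot a),
$$
which is the standard identity $x\otimes y=\lnot(\lnot x\oplus\lnot y)=\lnot(x\to\lnot y)$ in any MV-algebra. Hence
$$
\Set{z\otimes a | z\in C} = \Set{\lnot w | w\in\Set{z\to\lnot a | z\in C}}.
$$

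Next I would apply the first lemma of this section to the convex set $C$ with the element $\lnot a\in\mathcal L$ in place of $a$, obtaining that $\Set{z\to\lnot a | z\in C}$ is convex. Then applying the second lemma (preservation of convexity under $\lnot$) to this new convex set finishes the proof, since the image under $\lnot$ of a convex set is convex.

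There is no real obstacle here, which is presumably why the author chose this arrangement of the three lemmas: the first two do all the work, and the third is a trivial corollary obtained by composing them through the definitional identity for $\otimes$ in terms of $\to$ and $\lnot$. The only thing to be careful about is using $\lnot a$ rather than $a$ when invoking the first lemma, which is why the rewriting step is stated explicitly.
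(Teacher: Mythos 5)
Your proof is correct and follows the same route as the paper: rewrite $z\otimes a$ as $\lnot(z\to\lnot a)$ and combine the two preceding lemmas (the implication lemma applied with $\lnot a$, then closure of convexity under negation). No issues.
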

\begin{proof}
    $z\otimes a= \lnot (z\to\lnot a)$ so we can combine the last two 
    results to get this.
\end{proof}

\section{The discrete case}
\begin{defn}
    A \emph{discrete point} is a point $a$ with an immediate 
    successor or an immediate predecessor.
\end{defn}

It is easy to show the following propositions.
\begin{prop}
    If $\mathcal L$ has a discrete point then $0$ has a successor $c$.
\end{prop}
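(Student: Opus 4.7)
The plan is to exploit the convexity lemma for $\otimes$ just proved in the previous section to transport the gap between a pair of adjacent elements down to a gap just above $0$. A discrete point $a$ by definition provides a pair of elements $p < q$ with nothing strictly between them: either $(p,q)=(a,\text{succ}(a))$ or $(p,q)=(\text{pred}(a),a)$. Since we have already reduced to the linearly ordered case, the interval $[p,q]=\{p,q\}$ is a two-element convex set.

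Next I would apply the third convexity lemma with $C=\{p,q\}$ and the element $\lnot p$ in the role of the fixed parameter. The image
$$
\{z\otimes\lnot p \mid z\in C\} = \{p\otimes\lnot p,\; q\otimes\lnot p\} = \{0,\; q\otimes\lnot p\}
$$
is then convex. Set $c = q\otimes\lnot p$. To conclude that $c$ is the successor of $0$, I need only rule out $c=0$: in a linearly ordered MV-algebra, $q\otimes\lnot p = 0$ is equivalent to $q\le p$, which contradicts $p<q$. Hence $0<c$, and since $\{0,c\}$ is convex with nothing strictly between, $c$ is the immediate successor of $0$.

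The argument has no real obstacle beyond choosing the right witness; the main content is recognising that the MV-algebra analogue of $q-p$, namely $q\otimes\lnot p$, is the correct candidate and that the convexity lemma does the heavy lifting of showing nothing slips in between $0$ and $c$. Everything else is routine bookkeeping and the simple observation that in a chain, $x\otimes\lnot y=0$ forces $x\le y$.
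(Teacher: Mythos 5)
Your argument is correct. Note that the paper itself offers no proof here (it declares the propositions of this section easy), so there is nothing to diverge from: your witness $c=q\otimes\lnot p$ is exactly the natural candidate, $p\otimes\lnot p=0$ always holds, $c\neq 0$ because $q\otimes\lnot p=\lnot(q\to p)$ vanishes only when $q\le p$, and the $\otimes$-convexity lemma applied to the convex two-element set $\Set{p,q}$ indeed shows nothing lies strictly between $0$ and $c$. The only remark worth making is that the convexity lemma is not really needed: if $0<y<c$ then residuation gives $p<p\oplus y<p\oplus c=q$ (using $q\ominus p\le y$ iff $q\le p\oplus y$, and $p\oplus y=p$ forcing $y\wedge\lnot p=0$, hence $y=0$ in the linearly ordered setting), which contradicts $q$ covering $p$ directly; but invoking the lemmas the paper has just proved, as you do, is a perfectly legitimate and slightly slicker way to package the same computation.
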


\begin{prop}
    If $0$ has successor $c$ then $a\oplus c$ is the successor of 
    every $a<1$ and $a\ominus c$ is the predecessor of every $a>0$. 
\end{prop}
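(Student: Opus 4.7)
The plan is to establish the successor statement directly and then obtain the predecessor statement by applying the involution $\neg$. Throughout I use that, in the current linearly ordered setting, the preceding proposition makes $c$ the least positive element of $\mathcal L$ (every $x>0$ satisfies $x\ge c$).

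For the successor claim, fix $a<1$. I would first check $a\oplus c>a$: since $a\oplus c=\neg(\neg a\otimes\neg c)$, equality with $a$ would give $\neg a\otimes\neg c=\neg a$, and in a linearly ordered MV-algebra $x\otimes y=x$ forces $x=0$ or $y=1$, contradicting $a<1$ and $c>0$. Next I would show that nothing lies strictly between $a$ and $a\oplus c$: if $a<b$ then $b\ominus a>0$, so by minimality of $c$ we have $c\le b\ominus a$; monotonicity of $\oplus$ combined with the standard identity $a\oplus(b\ominus a)=a\vee b=b$ then yields $a\oplus c\le b$.

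For the predecessor claim I would simply dualise through $\neg$. Using $\neg(a\oplus c)=\neg a\ominus c$, the successor half applied to $\neg b$ (where $b>0$, so $\neg b<1$) shows that $\neg b\oplus c$ is the immediate successor of $\neg b$; applying $\neg$ reverses the order and converts this directly into the statement that $b\ominus c$ is the immediate predecessor of $b$. The only substantive step in the whole argument is the inequality $b\ominus a\ge c$, which rides on both linearity of $\mathcal L$ and the previous proposition identifying $c$ as the minimum positive element; everything else is routine manipulation of standard MV-identities, so I anticipate no serious obstacle.
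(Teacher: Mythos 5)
Your proof is correct. There is nothing in the paper to compare it against: the proposition is one of two that the paper dismisses with ``It is easy to show the following propositions'' and leaves unproved, so your write-up simply supplies the omitted argument, and it does so along the natural lines. The chain of steps -- linearity makes the successor $c$ of $0$ the least nonzero element, hence $c\le b\ominus a$ whenever $a<b$, hence $a\oplus c\le a\oplus(b\ominus a)=a\vee b=b$, and the predecessor half follows by applying the order-reversing involution via $\lnot(a\oplus c)=\lnot a\ominus c$ -- is sound, and every identity you use is a valid MV-identity. The one assertion you leave unjustified is that in a linearly ordered MV-algebra $x\otimes y=x$ forces $x=0$ or $y=1$; this is true and follows, for instance, from the identity $(u\oplus v)\ominus u=v\wedge\lnot u$ together with linearity (or from Chang's representation of MV-chains). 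Alternatively, you could bypass that fact altogether: if $a\oplus c=a$ with $a<1$, the same identity gives $0=(a\oplus c)\ominus a=c\wedge\lnot a$, and since $\lnot a>0$ in a chain this forces $c=0$, a contradiction. So the only gap is cosmetic, not substantive.
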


\begin{thm}
    If $\mathcal L$ is discrete and $\rsf F$ is a filter with 
    $\mathcal K(\rsf F)=\Set1$ then $\rsf F$ is principal.
\end{thm}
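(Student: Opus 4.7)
The plan is to prove the contrapositive: assume $\rsf F$ is not principal and exhibit an element $z<1$ lying in $\mathcal K(\rsf F)$. As remarked at the end of the previous section we may take $\mathcal L$ to be linearly ordered, and we also have $\rsf F\neq\mathcal L$ since $\mathcal K(\mathcal L)=\mathcal L\neq\Set 1$; in particular $0\notin\rsf F$.

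In a discrete chain, non-principality of $\rsf F$ means $\rsf F$ has no minimum, which, by upward closure together with the fact that $a\ominus c$ is the predecessor of each $a>0$, is equivalent to the condition that $y\ominus c\in\rsf F$ for every $y\in\rsf F$. Applying this to $y=c$ itself would yield $0=c\ominus c\in\rsf F$, contradicting $0\notin\rsf F$; hence $c\notin\rsf F$. In a linear order this forces $c<y$ for every $y\in\rsf F$, and discreteness upgrades this to $y\ge c\oplus c$.

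The witness is $z=\lnot c$, which is strictly below $1$. By \propref{prop:inclOne} applied with $\rsf G=\rsf F$, combined with the first proposition of Section~2, the condition $z\in\mathcal K(\rsf F)$ is equivalent to $y\otimes\lnot c\in\rsf F$ for every $y\in\rsf F$. Expanding $\lnot c\otimes y=\lnot(c\oplus\lnot y)$, and using that $c\oplus\lnot y$ is the successor of $\lnot y$ (valid because $y>0$) together with the fact that negation swaps successors and predecessors, one obtains $\lnot c\otimes y=y\ominus c$; the right-hand side lies in $\rsf F$ by the previous paragraph. Hence $\lnot c\in\mathcal K(\rsf F)\setminus\Set 1$, contradicting the hypothesis, so $\rsf F$ must be principal.

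The only delicate step I anticipate is verifying the identity $\lnot c\otimes y=y\ominus c$; this is essentially a one-line manipulation once $\otimes$ is rewritten in terms of $\oplus$ and $\lnot$ and the successor/predecessor descriptions are invoked. The genuine content of the argument is the observation $c\notin\rsf F$, which immediately determines the structure of any non-principal $\rsf F$ in the discrete linearly ordered setting.
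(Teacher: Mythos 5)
Your argument is correct and is essentially the paper's own proof: non-principality gives $y\ominus c\in\rsf F$ for every $y\in\rsf F$, whence $\lnot c\in\mathcal K(\rsf F)$ with $\lnot c<1$, contradicting $\mathcal K(\rsf F)=\Set1$. The only difference is packaging: the detour through $c\notin\rsf F$ (and the bound $y\ge c\oplus c$) and the successor/predecessor verification of $\lnot c\otimes y=y\ominus c$ are unnecessary, since $y\ominus c$ is by definition $y\otimes\lnot c$.
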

\begin{proof}
    If $\rsf F$ is nonprincipal, and $a\in\rsf F$ then we must have 
    $a\ominus c\in\rsf F$ and so $\lnot c\in\mathcal K(\rsf F)$ -- 
    where $c$ is the successor of $0$.
\end{proof}

Thus in the discrete case,  the set of filters we are interested in 
are exactly the principal ones and so the collection of these filters
is isomorphic to $\mathcal L$.

Hence we will assume that $\mathcal L$ is non-discrete.

\section{The non-discrete Case}
In this case we are really considering filters as cuts and we will 
set up the usual equivalence of cuts between $]p, 1]$ and $[p, 1]$. 
We cannot get a good theory if we take all cuts but things work well 
if we restrict ourselves to cuts with the same kernel. In that case 
we get an MV-algebra using $\sqTo$ and $\bullet^{+}$.

There is really only one axiom that is hard to check and we will work 
on it indirectly. In what follows we will assume that we are working 
in a linearly ordered MV-algebra and all filters have kernel $\Set1$.

\begin{lem}\label{lem:negate}
    $$\rsf F\sqTo]0, 1]=\rsf F^{+}.$$
\end{lem}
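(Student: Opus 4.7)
The plan is to unfold both sides using the earlier characterisation of $\sqTo$ and then translate the condition into a statement about $\lnot z$ being in or out of $\rsf F$.

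First I would observe that, since $\rsf F$ does not contain $0$, we have $\rsf F\subseteq ]0,1]$, so that $\rsf F\cap ]0,1]=\rsf F$ and the identity $\rsf F\sqTo ]0,1]=(\rsf F\cap ]0,1])\sqTo ]0,1]$ lets us apply the first proposition of Section~2 directly. This gives
$$
z\in\rsf F\sqTo ]0,1]\iff \forall f\in\rsf F\ f\otimes z\in ]0,1]\iff\forall f\in\rsf F\ f\otimes z\neq 0.
$$

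Next, I would use the standard MV-algebra identity $f\otimes z=\lnot(f\to\lnot z)$: we have $f\otimes z=0$ exactly when $f\to\lnot z=1$, i.e.\ when $f\le\lnot z$. Thus the condition above becomes: no $f\in\rsf F$ satisfies $f\le\lnot z$. Since $\rsf F$ is upward closed, this is equivalent to $\lnot z\notin\rsf F$, which is precisely $z\to 0\notin\rsf F$, i.e.\ $z\in\rsf F^{+}$.

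There is really no obstacle here; the only thing to be slightly careful about is making sure the first proposition of Section~2 applies (which requires $\rsf F\subseteq\rsf G$) and this is handled by the extension convention $\rsf F\sqTo\rsf G=(\rsf F\cap\rsf G)\sqTo\rsf G$ together with the fact that $0\notin\rsf F$. The whole argument is a two-line chain of equivalences once these translations are in place.
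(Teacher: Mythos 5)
Your proof is correct, but it takes a slightly longer route than the paper. The paper argues straight from the definition of $\sqTo$ as an intersection of subordinates: since the only element of $\mathcal L$ outside $]0,1]$ is $0$, the condition ``$z\to a\notin\rsf F$ for all $a\notin\,]0,1]$'' collapses at once to $z\to 0=\lnot z\notin\rsf F$, i.e.\ $\rsf F\sqTo\,]0,1]=\mathcal K(\rsf F;\{0\})=\rsf F_{0}=\rsf F^{+}$, essentially a one-line unfolding. You instead invoke the $\otimes$-characterisation $\rsf F\sqTo\rsf G=\Set{z | \forall f\in\rsf F\ f\otimes z\in\rsf G}$ (correctly checking its hypothesis $\rsf F\subseteq\,]0,1]$, which holds because a prime filter omits $0$), and then translate $f\otimes z\neq 0$ into $f\not\le\lnot z$ and use upward closure of $\rsf F$ to conclude $\lnot z\notin\rsf F$. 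Every step is sound; what the paper's route buys is immediacy (no appeal to the earlier proposition or to upward closure), while yours illustrates that the same fact drops out of the product characterisation — a fair trade for a statement that is close to tautological either way.
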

\begin{proof}
    \begin{align*}
	x\in\rsf F\sqTo]0, 1] & \text{ iff }\forall g\notin]0, 1]\ 
	x\to x\notin\rsf F  \\
	 & \text{ iff }x\to0=\lnot x\notin\rsf F  \\
	 & \text{ iff }x\in\rsf F^{+}.
    \end{align*}
\end{proof}

\begin{lem}
    Let $\rsf F_{1}\subseteq\rsf F_{2}$ and $\card{\rsf 
    F_{2}\setminus\rsf F_{1}}\geq 2$. Then
    $$
    \rsf F_{1}\sqTo\rsf F_{2}\not=\Set1.
    $$
\end{lem}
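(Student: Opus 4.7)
The plan is to produce an explicit witness $z\neq 1$ lying in $\rsf F_{1}\sqTo\rsf F_{2}$, using the characterisation
$$
\rsf F_{1}\sqTo\rsf F_{2}=\Set{z | \forall f\in\rsf F_{1}\ f\otimes z\in\rsf F_{2}}
$$
established at the start of the Basic Facts section. Since $\mathcal L$ is linearly ordered and $\card{\rsf F_{2}\setminus\rsf F_{1}}\geq 2$, I can pick two comparable elements $a<b$ in $\rsf F_{2}\setminus\rsf F_{1}$ and propose $z:=b\to a$ as the witness.

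Three small checks are required. First, $z<1$, because $a<b$ means $b\not\leq a$. Second, every $f\in\rsf F_{1}$ must satisfy $b<f$: if instead $f\leq b$ then upward closure of the lattice filter $\rsf F_{1}$ would force $b\in\rsf F_{1}$, contradicting the choice of $b$. Third, I compute $f\otimes z$ using monotonicity of $\otimes$ together with the standard MV-algebra identity $x\otimes(x\to y)=x\wedge y$:
$$
f\otimes z\;\geq\;b\otimes(b\to a)\;=\;b\wedge a\;=\;a\;\in\;\rsf F_{2},
$$
and upward closure of $\rsf F_{2}$ then gives $f\otimes z\in\rsf F_{2}$.

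Combining the three observations, $z$ lies in $\rsf F_{1}\sqTo\rsf F_{2}$ and $z<1$, so the filter is strictly larger than $\Set1$. The only genuine obstacle is spotting the witness; once $z=b\to a$ is on the table, the identity $b\otimes(b\to a)=a\wedge b=a$ closes the argument almost immediately. Note that neither the standing assumption $\mathcal K(\rsf F_{i})=\Set1$ nor non-discreteness of $\mathcal L$ is actually invoked, so the lemma in fact holds for arbitrary lattice filters in any linearly ordered MV-algebra.
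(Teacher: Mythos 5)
Your proof is correct and is essentially the paper's own argument: the paper likewise picks $f_{1}<f_{2}$ in $\rsf F_{2}\setminus\rsf F_{1}$ and uses the same witness $f_{2}\to f_{1}$, the only difference being that it checks membership directly from the definition of $\sqTo$ (for $g\notin\rsf F_{2}$ one has $(f_{2}\to f_{1})\to g\le(f_{2}\to f_{1})\to f_{1}=f_{2}\notin\rsf F_{1}$), whereas you route the same one-line computation through the $\otimes$-characterisation; your closing observation that neither trivial kernel nor non-discreteness is needed is consistent with the paper's proof as well.
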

\begin{proof}
    Let $f_{1}<f_{2}\in\rsf F_{2}\setminus\rsf F_{1}$. Then 
    $f_{2}\to f_{1}\in\rsf F_{1}\sqTo\rsf F_{2}$ as if $g\notin\rsf 
    F_{2}$ then $g<f_{1}$ and so
    $(f_{2}\to f_{1})\to g\le (f_{2}\to f_{1})\to f_{1}= 
    f_{2}\notin\rsf F_{1}$. As $f_{2}\to f_{1}<1$ we have the result.
\end{proof}

\begin{prop}
    Let $\rsf F_{1}\subseteq\rsf F_{2}$ and $\card{\rsf 
    F_{2}\setminus\rsf F_{1}}\geq 2$. Then for any filter $\rsf G$ 
    containing $\rsf F_{2}$ 
    $$
    \rsf F_{1}\sqTo\rsf G\not=\rsf F_{2}\sqTo\rsf G.
    $$
\end{prop}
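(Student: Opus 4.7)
By Proposition~\ref{prop:revIncl} we already have $\rsf F_{2}\sqTo\rsf G\subseteq\rsf F_{1}\sqTo\rsf G$, so the task reduces to exhibiting a witness $z$ in the difference. Since $\card{\rsf F_{2}\setminus\rsf F_{1}}\geq 2$, pick $f_{1}<f_{2}$ both in $\rsf F_{2}\setminus\rsf F_{1}$; in the linear order every element of $\rsf F_{1}$ strictly exceeds $f_{2}>f_{1}$, so the lower bound $p_{1}$ of $\rsf F_{1}$ satisfies $p_{1}\geq f_{2}>f_{1}$, leaving a strictly positive gap $p_{1}-f_{1}$ to work with. My candidate will be $z:=f_{1}\to a$ for a carefully chosen $a\notin\rsf G$ (which exists because $\rsf G$ is a proper filter of a non-trivial algebra), and I will check both halves through the original characterisation $z\in\rsf F\sqTo\rsf G\iff\forall a'\notin\rsf G\ z\to a'\notin\rsf F$.

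The easy direction $z\notin\rsf F_{2}\sqTo\rsf G$ comes for free, regardless of the precise choice of $a$. Since $f_{1}\in\rsf F_{2}\subseteq\rsf G$ but $a\notin\rsf G$, linearity forces $a<f_{1}$; hence $z\to a=(f_{1}\to a)\to a=f_{1}\vee a=f_{1}\in\rsf F_{2}$, which produces a single $a\notin\rsf G$ with $z\to a\in\rsf F_{2}$ and so falsifies membership in $\rsf F_{2}\sqTo\rsf G$.

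The harder direction is $z\in\rsf F_{1}\sqTo\rsf G$, i.e.\ $z\to a'\notin\rsf F_{1}$ for every $a'\notin\rsf G$. The map $a'\mapsto(f_{1}\to a)\to a'$ is monotone non-decreasing, so the constraint is tightest at the upper end of $\rsf G^{c}$, and the excess of $(f_{1}\to a)\to a'$ over $(f_{1}\to a)\to a=f_{1}$ is controlled by how far $a'$ exceeds $a$. The plan is therefore to pick $a\notin\rsf G$ close to the supremum of $\rsf G^{c}$, so that the gap $p_{1}-f_{1}$ dominates any residual $\sup\rsf G^{c}-a$; then every $z\to a'$ sits weakly below $p_{1}$ and hence outside $\rsf F_{1}$. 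The main obstacle is producing such an $a$ when $\rsf G^{c}$ has no largest element: I intend to appeal to the convexity lemmas of the preceding section (the images of convex sets under $\to$ and $\otimes$ being convex) to see that the set of admissible $a$ forms a convex interval with non-empty interior, and combine this with the non-discreteness hypothesis on $\mathcal L$ (which guarantees $\rsf G^{c}$ has elements arbitrarily close to its supremum) to select an $a$ meeting both requirements simultaneously.
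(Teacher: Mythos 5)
The witness shape $z=f_{1}\to a$ with $a\notin\rsf G$ is workable, and your easy half is correct: $a<f_{1}$ gives $z\to a=(f_{1}\to a)\to a=f_{1}\join a=f_{1}\in\rsf F_{2}$, so $z\notin\rsf F_{2}\sqTo\rsf G$. The genuine gap is in the harder half, at the one point where something must be \emph{produced}: the existence of a suitable $a$. Your justification --- convexity of images plus ``non-discreteness guarantees $\rsf G^{c}$ has elements arbitrarily close to its supremum'' --- does not establish it. Neither $\sup\rsf G^{c}$ nor your $p_{1}=\inf\rsf F_{1}$ need exist in a linearly ordered MV-algebra, ``arbitrarily close'' has no meaning here, and the quoted claim is false for non-discrete chains in general: in $\Gamma(\Q\times_{\mathrm{lex}}\Q,(1,0))$ the filter of all elements with first coordinate $>1/2$ is a counterexample, since no element of its complement can be pushed into the filter by adding an infinitesimal. (Such filters are excluded in this section, but by the kernel hypothesis, not by non-discreteness.) Likewise the convexity lemmas only say certain sets are convex; they never give the non-emptiness you need. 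What actually supplies the required $a$ is the standing assumption $\mathcal K(\rsf G)=\Set1$ --- exactly the hypothesis the paper invokes at the corresponding step of its own proof.

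Here is how your argument closes once that is used, with the tolerance measured by $f_{2}$ rather than the nonexistent $p_{1}$. Put $d=f_{2}\ominus f_{1}>0$. Since $\lnot d<1$ and $\mathcal K(\rsf G)=\Set1$, there is $a\notin\rsf G$ with $\lnot d\to a=d\oplus a\in\rsf G$; then every $a'\notin\rsf G$ satisfies $a'\le a\oplus d$, so $(f_{1}\to a)\to a'=(f_{1}\ominus a)\oplus a'\le\bigl((f_{1}\ominus a)\oplus a\bigr)\oplus d=f_{1}\oplus(f_{2}\ominus f_{1})=f_{1}\join f_{2}=f_{2}\notin\rsf F_{1}$, whence $z=f_{1}\to a\in\rsf F_{1}\sqTo\rsf G$ (and the case $\rsf G=]0,1]$ needs no separate treatment: take $a=0$). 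With that substitution your proof is a mirror image of the paper's: the paper takes the witness $a_{2}\to g$ with $g\in\rsf G$ chosen, again via $a_{2}\to a_{1}\notin\mathcal K(\rsf G)$, so that $g\otimes(a_{2}\to a_{1})\notin\rsf G$, and then needs its convexity lemma to avoid the degenerate case $g\otimes(a_{2}\to a_{1})=0$; approaching the cut from below as you do, no such degenerate case arises. So the strategy is sound and arguably cleaner than the paper's, but as written the key selection step rests on a false principle and must be replaced by the kernel argument.
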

\begin{proof}
    We know that $\rsf F_{2}\sqTo\rsf G\subseteq\rsf F_{1}\sqTo\rsf 
    G$ so we seek an element of the latter that is not in the former.
    
    We can assume that $\rsf G\not=]0, 1]$ -- as in that case we have 
    $\rsf F_{1}\sqTo\rsf G=\rsf F_{1}^{+}\not=\rsf F_{2}^{+}=\rsf 
    F_{2}\sqTo\rsf G$,  by \lemref{lem:negate}.
    
    Let $a_{1}<a_{2}$ in $\rsf F_{2}\setminus\rsf F_{1}$. Now for any 
    $g\in\rsf G\setminus\rsf F_{2}$ we have $a_{i}\to g\in\rsf 
    F_{1}\sqTo\rsf G$ -- since if $h\notin\rsf G$ then we have 
    $h<g$ and so $(a_{i}\to g)\to h< (a_{i}\to g)\to g= 
    a_{i}\notin\rsf F_{1}$. 
    
    We know that $a_{2}\to a_{1}<1$ is not in $\mathcal K(\rsf G)$ 
    and so we can find $g\in\rsf G$ with $g'= g\otimes(a_{2}\to 
    a_{1})\notin\rsf G$. 
    
    If $g'=0$ then pick $0<g''<\rsf G$ and using the fact that 
    $C=[a_{1}, 1]\setminus\rsf F_{1}$ is convex,  then 
    $\Set{g\otimes (z\to a_{1}) | z\in C}$ is also convex,  it 
    contains $0$ and $g= g\otimes(a_{1}\to a_{1})$ and so must 
    contain $g''$. 
    
    Thus we may assume that $a_{2}$ is chosen so that $g'>0$. 
    Now $g\otimes (a_{2}\to a_{1})>0$ implies $(a_{2}\to a_{1})>\lnot 
    g$ and so 
    \begin{align*}
	(a_{2}\to a_{1}) & = (\lnot g)\oplus((a_{2}\to a_{1})\ominus 
	(\lnot g))  \\
	 & = (\lnot g)\oplus((a_{2}\to a_{1})\otimes g)  \\
	 & = g\to g'.
    \end{align*}
    
    We now have $a_{2}\to g\in\rsf F_{1}\sqTo\rsf G$ and we want to 
    show that it is not in $\rsf F_{2}\sqTo\rsf G$.
    In fact we have 
    $a_{1}\le (a_{2}\to g)\to g'$ as 
    \begin{align*}
	a_{1}\le (a_{2}\to g)\to g' & \text{ iff }a_{2}\to g\le 
	a_{1}\to g'  \\
	 & \text{ iff }(a_{1}\to g')\to g\le a_{2}  \\
	 \intertext{ but }
	(a_{1}\to g')\to g & = (g\to g')\to a_{1}  \\
	 & = (a_{2}\to a_{1})\to a_{1}\\
	 &= a_{2}. 
    \end{align*}
\end{proof}

The ``associativity'' axiom is the most difficult to establish. Here
is the easy half.

\begin{lem}\label{lem:FFg}
    Let $\rsf F\subseteq\rsf G$. Then
    $$
    \rsf F\subseteq(\rsf F\sqTo\rsf G)\sqTo\rsf G.
    $$
\end{lem}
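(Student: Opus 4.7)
The plan is to reduce the statement to the symmetry property recorded in the unlabeled proposition just before \propref{prop:axiomC}, which asserts that for any $\rsf F,\rsf H\subseteq\rsf G$ one has
$$
\rsf F\subseteq\rsf H\sqTo\rsf G \iff \rsf H\subseteq\rsf F\sqTo\rsf G.
$$
First I would observe, via \propref{prop:incl}, that $\rsf F\sqTo\rsf G\subseteq\rsf G$, so I am entitled to substitute $\rsf H=\rsf F\sqTo\rsf G$ in the symmetry statement. Second, the right-hand condition then becomes the tautology $\rsf F\sqTo\rsf G\subseteq\rsf F\sqTo\rsf G$, so the left-hand condition $\rsf F\subseteq(\rsf F\sqTo\rsf G)\sqTo\rsf G$ follows at once.

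If one prefers to avoid invoking the symmetry proposition, the same proof can be expanded directly from the product characterisation at the start of Section~2. Since $\rsf F\sqTo\rsf G\subseteq\rsf G$, that characterisation gives
$$
(\rsf F\sqTo\rsf G)\sqTo\rsf G=\Set{z | \forall w\in\rsf F\sqTo\rsf G\ w\otimes z\in\rsf G}.
$$
For any $f\in\rsf F$ and any $w\in\rsf F\sqTo\rsf G$, applying the defining condition for $w$ to the element $f\in\rsf F$ yields $w\otimes f\in\rsf G$; hence $f$ lies in the displayed set, proving $\rsf F\subseteq(\rsf F\sqTo\rsf G)\sqTo\rsf G$.

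There is no real obstacle: the lemma is morally just the ``unit'' of the self-adjunction of $\rsf H\mapsto\rsf H\sqTo\rsf G$, coming from the manifest symmetry in $(\rsf F,\rsf H)$ of the binary relation $\forall f\in\rsf F\,\forall h\in\rsf H\ f\otimes h\in\rsf G$, which is exactly the symmetry already exploited in \propref{prop:axiomC}.
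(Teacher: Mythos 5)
Your proof is correct, but it takes a genuinely different (and more general) route than the paper's. The paper argues directly at the level of elements and subordinate filters: for $z\in\rsf F$ and $a\notin\rsf G$ it supposes $z\to a\in\rsf F\sqTo\rsf G$, deduces $z\to a\in\rsf F_{a}$, hence $(z\to a)\to a=z\join a=z\notin\rsf F$, a contradiction; this uses the MV identity $(x\to y)\to y=x\join y$ together with the section's standing hypotheses (linear order, kernel $\Set1$), though upward closure of $\rsf F$ would already suffice. You instead specialise the unlabeled symmetry proposition preceding \propref{prop:axiomC} to $\rsf H=\rsf F\sqTo\rsf G$ --- legitimate, since \propref{prop:incl} gives $\rsf F\sqTo\rsf G\subseteq\rsf G$ --- so the lemma drops out as the unit of the self-adjunction $\rsf H\mapsto\rsf H\sqTo\rsf G$; your fallback argument via the $\otimes$-characterisation from Section~2 is the same computation written multiplicatively, using commutativity of $\otimes$. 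What your route buys is independence from the standing assumptions of the section and the observation that the lemma is a purely formal consequence of facts already established for arbitrary (prime lattice) filters; what the paper's proof buys is a short self-contained verification in the subordinate-filter style used throughout that part of the paper. Either argument is acceptable.
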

\begin{proof}
    It suffices to show that 
    $$
    \rsf F\subseteq\mathcal (\rsf F\sqTo\rsf G)_{a}
    $$
    for all $a\notin\rsf G$.
    
    Let $z\in\rsf F$ and $a\notin\rsf G$ -- and so $a<z$. If $z\to a\in \rsf F\sqTo\rsf G$
    then $z\to a\in\rsf F_{a}$, ie $z\join a= z\notin\rsf F$ -- contradiction.
\end{proof}

\subsection{The principal and coprincipal cases}
It is somewhat important to eliminate the cases $\rsf F=[p,1]$ and $\rsf F=]p,1]$ from consideration
but we still need to know what happens for such filters. 

\begin{lem}
	$$
		]p,1]^+=[\lnot p,1].
	$$
\end{lem}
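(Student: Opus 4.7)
The plan is to unfold the definitions and exploit linearity together with the standard order-reversing behavior of negation in an MV-algebra. Recall $\rsf F^+ = \rsf F_0 = \Set{z | z\to 0\notin\rsf F} = \Set{z | \lnot z\notin\rsf F}$. Applied to $\rsf F = \,]p,1]$, this gives
$$
]p,1]^+ = \Set{z | \lnot z\notin \,]p,1]}.
$$

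Next I would use the standing assumption (from the start of this section) that $\mathcal L$ is linearly ordered: the complement of $]p,1]$ in $\mathcal L$ is exactly $[0,p]$, so $\lnot z\notin \,]p,1]$ is equivalent to $\lnot z\le p$. Then applying $\lnot$ to both sides and using $\lnot\lnot z = z$ together with the fact that $\lnot$ reverses $\le$, this is equivalent to $\lnot p\le z$. Hence $z\in ]p,1]^+$ iff $\lnot p\le z$, giving $]p,1]^+ = [\lnot p,1]$.

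There isn't really a hard step here — the proof is two definitional rewrites plus one application of the involutivity and antitonicity of $\lnot$. The only thing to be mindful of is that we genuinely need linearity to convert $\lnot z\notin \,]p,1]$ into the inequality $\lnot z\le p$; without it we would only know that $\lnot z$ and some element $\le p$ are comparable, which would force a small extra argument. Since the section explicitly assumes a linearly ordered MV-algebra, this is free.
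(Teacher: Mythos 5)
Your proof is correct and is essentially the paper's own argument: unfold $\rsf F^{+}$ as $\Set{z | \lnot z\notin\rsf F}$, use linearity to translate $\lnot z\notin\,]p,1]$ into $\lnot z\le p$, then apply involutivity and antitonicity of $\lnot$ to get $\lnot p\le z$. No differences worth noting.
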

\begin{proof}
$x\in]p,1]^+$ iff $\lnot x\notin]p,1]$ iff $\lnot x\le p$ iff $\lnot p\le x$ so that 
	$]p,1]^+=[\lnot p,1]$. 
\end{proof}

\begin{prop}\label{prop:prOne}
    Let $0<p<q<1$. Then
    $$
	]q, 1]\sqTo]p, 1]=[q\to p, 1].
    $$
\end{prop}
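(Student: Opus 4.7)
The plan is to unfold the definition of $\sqTo$: $z\in {]q,1]}\sqTo {]p,1]}$ iff for every $a\le p$, $z\to a\le q$. Both inclusions will then be reduced to the single equivalence $z\to p\le q\iff q\to p\le z$, valid in our linearly ordered MV-algebra whenever $0<p<q<1$.

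For the inclusion $\subseteq$, I would specialise to $a=p$, obtaining $z\to p\le q$. Since $q<1$ this forces $z\not\le p$, so $p<z$ in the chain, and hence $z\to p=\lnot(z\ominus p)$. Negating and rearranging,
$$
z\to p\le q\iff\lnot q\le z\ominus p\iff\lnot q\oplus p\le(z\ominus p)\oplus p=z\vee p=z,
$$
the last equality because $p\le z$. Since $\lnot q\oplus p=q\to p$, this gives $z\ge q\to p$, i.e., $z\in[q\to p,1]$.

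For the inclusion $\supseteq$, suppose $z\ge q\to p$. Because $q<1$ we have $q\to p>p$, so $z>p$; because $p<q$ we have $q\to p<1$, so subtraction by $p$ inverts addition by $p$ on $q\to p$, giving $(q\to p)\ominus p=\lnot q$. Monotonicity of $\ominus$ in its first argument then yields $z\ominus p\ge\lnot q$, i.e., $z\to p\le q$. For any $a\le p$, monotonicity of $z\to(\cdot)$ gives $z\to a\le z\to p\le q$, as required.

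The main obstacle is the short MV-algebra manipulation behind $z\to p\le q\iff q\to p\le z$. It rests on the chain identity $(x\ominus p)\oplus p=x\vee p$ together with the two endpoint observations that $q<1$ forces $z>p$ (making $z\vee p=z$) and that $p<q$ forces $q\to p<1$ (making $\ominus p$ the inverse of $\oplus p$ on $q\to p$); these let us pass cleanly between $\ominus p$ and $\oplus p$ in both directions.
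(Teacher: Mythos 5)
Your proof is correct and follows essentially the same route as the paper: both reduce the defining condition to the single test point $a=p$ via monotonicity of $z\to(\cdot)$, and then the equivalence $z\to p\le q\iff q\to p\le z$ (which the paper phrases through $(q\to p)\to p=q$ and upward closure of the subordinate filter, and you phrase through the chain identities $(z\ominus p)\oplus p=z\vee p$ and $(q\to p)\ominus p=\lnot q$). The only nitpick is that in the displayed chain for $\subseteq$ you assert ``$\iff$'' while the cited reasons (monotonicity of $\oplus p$, $p\le z$) justify only the forward implications; that is all that inclusion needs, and the reverse direction is supplied correctly in your $\supseteq$ argument.
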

\begin{proof}
    First we observe that $q\to p$ is actually in. 
    
    If $z<\le p$ then $(q\to p)\to z\le (q\to p)\to p=q$ and so $(q\to 
    p)\to z\notin ]q, 1]$.
    
    If $b<q\to p$ then $q=(q\to p)\to p< b\to p$ and so $b\to 
    p\in ]q, 1]$.
\end{proof}

\begin{prop}\label{prop:prTwo}
    Let $0\le p<q\le 1$. Then
    $$
	[q, 1]\sqTo]p, 1]=]q\to p, 1].
    $$
\end{prop}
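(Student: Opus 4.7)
The plan is to combine the $\otimes$-characterization from the first proposition of Section~2, namely $\rsf F\sqTo\rsf G=\Set{z | \forall f\in\rsf F\ f\otimes z\in\rsf G}$, with the standard adjunction $q\otimes z\le p \iff z\le q\to p$. Specialising to $\rsf F=[q,1]$ and $\rsf G=]p,1]$, the left hand side becomes $\Set{z | \forall f\ge q\ f\otimes z>p}$.

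Since $\otimes$ is monotone in each argument, $f\otimes z\ge q\otimes z$ whenever $f\ge q$, so the universal quantifier is controlled by its smallest witness $f=q$, and the membership condition collapses to the single requirement $q\otimes z>p$. By adjointness this is equivalent to $z\not\le q\to p$; and because we are working in a linearly ordered MV-algebra (as stipulated at the start of the section), $\not\le$ is the same as $>$, giving $z>q\to p$, i.e. $z\in ]q\to p,1]$.

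If I prefer to mirror the style of \propref{prop:prOne} and work directly with the $\to$-form of the definition, the same argument splits into two clean halves. For the inclusion $]q\to p,1]\subseteq[q,1]\sqTo]p,1]$: given $z>q\to p$ and $a\le p$, it suffices to show $z\to p<q$, for then $z\to a\le z\to p<q$, so $z\to a\notin[q,1]$; but $z\to p\ge q$ would yield $z\le q\to p$ by adjointness, contradicting $z>q\to p$. For the reverse inclusion, if $z\le q\to p$ then taking $a=p\notin\rsf G$ gives $z\to p\ge (q\to p)\to p=q\join p=q$, so $z\to p\in[q,1]$, witnessing $z\notin[q,1]\sqTo]p,1]$.

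There is no real obstacle here: the only subtlety is the conversion from $\not\le$ to $>$, which is automatic by linearity, and the use of the MV-algebra identity $(x\to y)\to y=x\join y$, which collapses to $q$ under the hypothesis $p<q$. The argument works uniformly at the boundary values $p=0$ and $q=1$.
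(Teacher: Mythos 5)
Your proof is correct and, in its second (direct) form, is essentially the paper's own argument: exclude the endpoint $q\to p$ using $(q\to p)\to p=q\join p=q$ with witness $a=p$, and show every $z>q\to p$ lies in $[q,1]\sqTo]p,1]$ because $z\to a\le z\to p<q$; your version merely makes explicit the adjointness step giving $z\to p<q$ and the reduction of general $a\le p$ to $a=p$, which the paper leaves implicit. The preliminary reformulation via the $\otimes$-characterization of $\sqTo$ is just an equivalent repackaging of the same adjointness fact, so nothing genuinely different is going on, and both boundary cases $p=0$, $q=1$ are handled as you say.
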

\begin{proof}
    First we show that $q\to p$ is not in $[q, 1]\sqTo]p, 1]$ as 
    $p\notin]p, 1]$ but $q=(q\to p)\to p\in [q, 1]$.
    
    If $z>q\to p$ then $z\to p< q=(q\to p)\to p$ and so $z\to 
    p\notin [q, 1]$.
\end{proof}

%
%

\begin{prop}
    Suppose that $0<p\le q<1$. Then 
    $$
    	\vert q, 1]\sqTo[p, 1]=[q\to p, 1]
    $$
    where $\vert q, 1]$ can be either $]q, 1]$ or $[q, 1]$. 
\end{prop}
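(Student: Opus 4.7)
The plan is to verify both inclusions. The inclusion $[q\to p,1]\subseteq \vert q,1]\sqTo[p,1]$ is the easy direction: for $z\ge q\to p$ and any $f\in\vert q,1]$ (so $f\ge q$), the inequality $f\otimes z\ge q\otimes(q\to p)=q\wedge p=p$ holds, using $p\le q$ for the last equality. Hence $f\otimes z\in[p,1]$, giving $z\in \vert q,1]\sqTo[p,1]$.

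For the reverse inclusion I would argue by contrapositive: assume $z<q\to p$ and produce a witness $f\in\vert q,1]$ with $f\otimes z\notin[p,1]$. The crucial observation is a plateau fact for linearly ordered MV-algebras with $p>0$: the equation $f\otimes z=p$ has a unique solution in $f$, namely $f=z\to p$. This is because $f\otimes z=0$ whenever $f\le\lnot z$, while $f\otimes z=f\ominus\lnot z$ is strictly increasing in $f$ when $f\ge\lnot z$. The adjunction gives $z\otimes q\le p$ and, since $z\ne q\to p$, we actually have $z\otimes q<p$. When $\vert q,1]=[q,1]$, take $f=q$ directly. When $\vert q,1]=]q,1]$, first note that $z<q\to p$ implies $z\to p>q$ (from $\lnot z>q\ominus p$ one gets $\lnot z\oplus p>(q\ominus p)\oplus p=q\vee p=q$), and then use density, available since $\mathcal L$ is non-discrete, to pick $f\in\mathcal L$ with $q<f<z\to p$; the same plateau argument then yields $f\otimes z<p$.

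The main obstacle is the plateau fact itself. Once it is in hand, the rest is routine manipulation using standard MV-identities such as $a\otimes(a\to b)=a\wedge b$ and $(a\ominus b)\oplus b=a\vee b$, together with the density already exploited in the preceding convexity arguments. As an alternative for the case $\vert q,1]=[q,1]$, one could bypass the direct argument entirely by invoking \propref{prop:plus}, since $[q,1]\sqTo[p,1]=]\lnot p,1]\sqTo\,]\lnot q,1]$, which when $p<q$ falls under \propref{prop:prOne} and gives $[\lnot p\to\lnot q,1]=[q\to p,1]$; the boundary case $p=q$ is handled by \propref{prop:inclOne} together with the standing assumption that the kernel is $\Set1$.
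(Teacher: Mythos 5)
Your argument is correct, and its skeleton matches the paper's: the inclusion $[q\to p,1]\subseteq\vert q,1]\sqTo[p,1]$ comes from $q\otimes(q\to p)=q\wedge p=p$ (the paper only verifies the generator $q\to p$ and relies on upward closure; you verify all $z\ge q\to p$ directly, which amounts to the same thing), and the reverse inclusion is proved contrapositively by producing, for each $z<q\to p$, a witness $f\in\vert q,1]$ with $f\otimes z<p$, with density of the non-discrete chain supplying a witness strictly above $q$. Where you genuinely differ is the mechanism for finding the witness: the paper invokes the convexity lemma of the preceding section -- if every $f>q$ had $f\otimes z>p$ then $\{s\otimes z\mid s\ge q\}$ would fail to be convex, so some $f>q$ has $f\otimes z=p$, and then any $f'$ with $q<f'<f$ works -- whereas you use the explicit ``plateau'' uniqueness fact that $f\otimes z=p>0$ forces $f=z\to p$, show $q<z\to p$, and pick $f$ strictly in between by density. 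The two devices are near-equivalent, and your uniqueness fact is in fact exactly what legitimises the paper's implicit strictness step ``$f'\otimes z<f\otimes z=p$'', so your route is, if anything, more self-contained; your shortcut $f=q$ in the $[q,1]$ case and the alternative via \propref{prop:plus}, \propref{prop:prOne} and \propref{prop:inclOne} are also fine. One step you should state more carefully: ``$\lnot z>q\ominus p$ gives $\lnot z\oplus p>(q\ominus p)\oplus p$'' is not an instance of strict monotonicity of $\oplus$, which fails in general; it is true here because $(q\ominus p)\oplus p=q<1$, so if $\lnot z\oplus p=q$ then $\lnot z=(\lnot z\oplus p)\ominus p=q\ominus p$ (the subtraction recovers $\lnot z$ since the sum is below $1$), contradicting $\lnot z>q\ominus p$ -- the same injectivity-below-saturation phenomenon as your plateau fact, so it is a wording issue rather than a gap.
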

\begin{proof}
    Let $k<q\to p$. We want to show that there is some $f>q$ with 
    $k\otimes f<p$. 
    
    We note that $p\le q$ implies $q\otimes(q\to p)=p$. Now if every 
    $f>q$ has $p<f\otimes k$ then we get 
    $q\otimes k< q\otimes (q\to p)=p < f\otimes k$,  which implies the
    set $\Set{s\otimes k | s\geq q}$ is not convex. 
    Hence there is some $f>q$ with $f\otimes k=p$. 
    
    Now we can find $q<f'<f$ and so $f'\otimes k<f\otimes k= p$. 
    
%
    
    Finally we have $q\to p\in \vert q, 1]\sqTo[p, 1]$ as if $h<p$
    then $(q\to p)\to h< (q\to p)\to p=q$ and so 
    $(q\to p)\to h\notin\vert q, 1]$.
\end{proof}

\begin{cor}
    If $0<p<1$ then 
    $$
    	]p, 1]\sqTo[p, 1]=\Set1.
    $$
\end{cor}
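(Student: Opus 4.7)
The plan is to obtain the corollary as an immediate specialisation of the preceding proposition by taking $q = p$. The hypothesis of that proposition requires $0 < p \le q < 1$, and the equality $q = p$ is allowed, so we may apply it with $q = p$. Choosing the version where $\vert q, 1]$ stands for $]q, 1]$ yields
$$
]p, 1]\sqTo[p, 1]=[p\to p, 1].
$$

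Since $p \to p = 1$ in any MV-algebra, the filter $[p \to p, 1]$ reduces to $[1, 1] = \Set 1$, which gives exactly the asserted equality.

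The only thing to verify is that the degenerate case $q = p$ is genuinely covered by the argument given for the proposition; inspecting that proof, the key steps (using $q\otimes(q\to p)=p$, a convexity argument, and the verification that $q \to p$ lies in the set) all remain valid when $q = p$, so no extra work is needed. There is no real obstacle here — the whole content is a boundary-case substitution.
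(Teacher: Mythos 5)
Your proposal is correct and is exactly the intended argument: the paper states this corollary without a separate proof, as the immediate specialisation $q=p$ of the preceding proposition (whose hypothesis $0<p\le q<1$ explicitly permits equality), giving $]p,1]\sqTo[p,1]=[p\to p,1]=[1,1]=\Set1$. Your extra check that the proposition's proof degenerates harmlessly at $q=p$ is a reasonable precaution but not needed beyond noting the hypothesis already covers that case.
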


\subsection{The general case}
Combining several of the above 
results, we get the following theorem on filter equivalence.

\begin{thm}
    $\rsf F\sqTo\rsf G=\Set1$ iff $\rsf G\subseteq\rsf F$ or $\rsf 
    G=[g, 1]$, $\rsf F=]g, 1]$.
\end{thm}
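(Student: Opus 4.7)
The plan is to deduce both directions of the equivalence purely from results already established: \propref{prop:inclOne}, the Corollary $]p, 1]\sqTo[p, 1]=\Set1$, and the Lemma stating $\rsf F_1\sqTo\rsf F_2\neq\Set 1$ whenever $\rsf F_1\subseteq\rsf F_2$ and $\card{\rsf F_2\setminus\rsf F_1}\geq 2$. The key reduction throughout is $\rsf F\sqTo\rsf G=(\rsf F\cap\rsf G)\sqTo\rsf G$, which lets us replace $\rsf F$ by $\rsf F\cap\rsf G$ and thereby apply the cardinality lemma.

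For the ``if'' direction, suppose first $\rsf G\subseteq\rsf F$. Then $\rsf F\sqTo\rsf G=(\rsf F\cap\rsf G)\sqTo\rsf G=\rsf G\sqTo\rsf G=\mathcal K(\rsf G)=\Set 1$, using \propref{prop:inclOne} together with our standing assumption that all filters have kernel $\Set 1$. The remaining case $\rsf G=[g, 1]$, $\rsf F=]g, 1]$ is exactly the statement of the Corollary $]p, 1]\sqTo[p, 1]=\Set 1$ (note that $0<g<1$ because $\rsf G$ is a proper filter and $\rsf F$ is nonempty).

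For the ``only if'' direction, assume $\rsf F\sqTo\rsf G=\Set 1$. Put $\rsf F'=\rsf F\cap\rsf G$, so $\rsf F'\subseteq\rsf G$ and $\rsf F'\sqTo\rsf G=\rsf F\sqTo\rsf G=\Set 1$. The contrapositive of the cardinality Lemma forces $\card{\rsf G\setminus\rsf F'}\leq 1$, and since $\rsf G\setminus\rsf F'=\rsf G\setminus\rsf F$, we get $\card{\rsf G\setminus\rsf F}\leq 1$. If this cardinality is $0$, then $\rsf G\subseteq\rsf F$ and we are done. Otherwise write $\rsf G\setminus\rsf F=\Set g$.

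It remains to verify that in the singleton case $\rsf G=[g, 1]$ and $\rsf F=]g, 1]$; this is routine filter bookkeeping. Any $h\in\rsf G$ with $h<g$ would satisfy $h\neq g$, hence $h\in\rsf F$, and upward closure of $\rsf F$ would then give $g\in\rsf F$, contradicting $g\in\rsf G\setminus\rsf F$. So $\rsf G\subseteq[g, 1]$, and the reverse inclusion is automatic from $g\in\rsf G$. Similarly any $h\in\rsf F$ with $h\leq g$ would force $g\in\rsf F$, so $\rsf F\subseteq]g, 1]$; conversely, any $h>g$ lies in $\rsf G$ and is distinct from $g$, hence lies in $\rsf F$, giving $]g, 1]\subseteq\rsf F$. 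There is no genuine obstacle in this argument: all the substantive content has been packaged into the preceding lemmas, and the theorem is essentially a synthesis of them.
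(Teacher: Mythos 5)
Your proof is correct and follows essentially the same route as the paper: \propref{prop:inclOne} plus the principal/coprincipal computation for the ``if'' direction, and the contrapositive of the cardinality lemma (via $\rsf F\cap\rsf G$) for the ``only if'' direction. You simply spell out the final bookkeeping showing $\rsf G=[g,1]$ and $\rsf F=]g,1]$ in the singleton case, which the paper leaves implicit, and you cite the corollary $]p,1]\sqTo[p,1]=\Set1$ where the paper's reference to \propref{prop:prOne} is the less apt pointer.
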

\begin{proof}
    We know that the right side implies the left from 
    \propref{prop:prOne} and \propref{prop:inclOne}.
    
    We also know that if $\rsf F\sqTo\rsf G=\Set1$ then $\card{\rsf 
    G\setminus\rsf F}<2$ and so is either zero (ie $\rsf 
    G\subseteq\rsf F$) or one.
    
    In the latter case if $g\in\rsf G\setminus\rsf F$ then $\rsf 
    G=[g, 1]$ and $\rsf F=]g, 1]$.
\end{proof}

\begin{cor}
    $$
	\rsf F\sqTo\rsf G\not=\Set1\text{ iff }
	\card{\rsf G\setminus\rsf F}\geq 2.
    $$
\end{cor}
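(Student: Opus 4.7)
The plan is to read this corollary as nothing more than a bookkeeping reformulation of the preceding theorem, so the proof should be almost automatic once I translate the two ``bad'' cases of the theorem into cardinality statements.

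First I would reduce to the case $\rsf F\subseteq\rsf G$. By the extended definition, $\rsf F\sqTo\rsf G=(\rsf F\cap\rsf G)\sqTo\rsf G$, and replacing $\rsf F$ by $\rsf F\cap\rsf G$ does not affect the difference $\rsf G\setminus\rsf F=\rsf G\setminus(\rsf F\cap\rsf G)$. So without loss of generality I assume $\rsf F\subseteq\rsf G$, and then the theorem specializes to: $\rsf F\sqTo\rsf G=\Set1$ iff $\rsf F=\rsf G$ or ($\rsf G=[g,1]$ and $\rsf F=]g,1]$).

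The forward direction of the corollary is then just contrapositive-plus-counting. In the first disjunct, $\rsf G\setminus\rsf F=\emptyset$, so $\card{\rsf G\setminus\rsf F}=0$; in the second, $\rsf G\setminus\rsf F=\Set g$, so $\card{\rsf G\setminus\rsf F}=1$. Hence $\rsf F\sqTo\rsf G=\Set1$ forces $\card{\rsf G\setminus\rsf F}<2$, giving the contrapositive of the ``only if'' direction of the corollary.

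For the reverse direction I assume $\card{\rsf G\setminus\rsf F}<2$ and need to land in one of the two cases of the theorem. If $\card{\rsf G\setminus\rsf F}=0$ then $\rsf G\subseteq\rsf F$ and we are in the first case. The only real content is the case $\rsf G\setminus\rsf F=\Set g$: using that $\mathcal L$ is linearly ordered (the standing assumption of this section) together with the fact that $\rsf F\cap\rsf G$ and $\rsf G$ are upward closed, I would argue that $g$ must be the minimum of $\rsf G$ (any element of $\rsf G$ strictly below $g$ would force $g\in\rsf F\cap\rsf G$ by upward closure, contradicting $g\notin\rsf F\cap\rsf G$), so $\rsf G=[g,1]$, and then $\rsf F\cap\rsf G$ contains every element of $\rsf G$ above $g$, giving $\rsf F\cap\rsf G=]g,1]$. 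Applying the theorem to $\rsf F\cap\rsf G\subseteq\rsf G$ then yields $(\rsf F\cap\rsf G)\sqTo\rsf G=\Set1$, i.e.\ $\rsf F\sqTo\rsf G=\Set1$. This last identification of $g$ as the minimum of $\rsf G$ is the only step that uses anything beyond the theorem, and it is the ``main obstacle''—though really a trivial one in the linear setting.
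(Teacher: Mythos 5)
Your proposal is correct and is essentially the paper's own (implicit) argument: the corollary is stated without proof precisely because it is the counting reformulation of the preceding theorem, whose two cases give difference sets of size $0$ and $1$, with the singleton case pinned down exactly as you do using linearity and upward closure. The only blemish is that you swap the labels ``if'' and ``only if'' for the two implications, but since you prove both directions this is purely cosmetic.
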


\section{Cuts}
Now we are set to define our basic equivalence relation on filters 
and show that this naturally produces an MV-algebra.

\begin{defn}
    Let $\rsf F$ and $\rsf G$ be two filters in $\mathcal L$. Then 
    $$
	\rsf F\equiv\rsf G\text{ iff }\rsf F\sqTo\rsf G= \rsf 
	G\sqTo\rsf F=\Set1.
    $$
\end{defn}

As is usual with cuts (recall $\Q$ giving rise to $\R$) we have 
that
$\rsf F\equiv\rsf G$ iff $\rsf F$ and $\rsf G$ differ by at most one 
point. This is also true here, as we show below. First we establish 
that $\equiv$ is a congruence relation.

\begin{thm}
    $\equiv$ is an equivalence relation on filters.
\end{thm}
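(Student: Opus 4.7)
The plan is to verify the three properties of an equivalence relation in turn. Reflexivity reduces to showing $\rsf F\sqTo\rsf F = \Set1$, which follows by taking $\rsf G = \rsf F$ in \propref{prop:inclOne} together with the standing assumption in this section that every filter under consideration has kernel $\Set 1$. Symmetry is immediate from the symmetry of the defining condition $\rsf F\sqTo\rsf G = \rsf G\sqTo\rsf F = \Set1$.

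For transitivity the key step is to first extract from the preceding characterization theorem a clean description of $\equiv$. Applying that theorem in both directions, one gets four sub-cases for $\rsf F\equiv\rsf G$: either $\rsf G\subseteq\rsf F$ and $\rsf F\subseteq\rsf G$ (giving $\rsf F=\rsf G$); or $\rsf G\subseteq\rsf F$ together with $\rsf F=[f,1]$, $\rsf G=]f,1]$; or the symmetric case with $f$ replaced by $g$; or finally both non-inclusion cases, which would demand $]g,1] = [f,1]$. The last possibility is impossible because $\mathcal L$ is non-discrete, so no half-open ray $]g,1]$ has a least element. Hence $\rsf F\equiv\rsf G$ holds precisely when $\rsf F = \rsf G$ or $\{\rsf F,\rsf G\} = \{[p,1],\,]p,1]\}$ for some $0 < p < 1$.

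Given this description, transitivity becomes a short case analysis. Suppose $\rsf F\equiv\rsf G$ and $\rsf G\equiv\rsf H$. If either equivalence is an equality of filters we are done by substitution and reflexivity. Otherwise $\{\rsf F,\rsf G\} = \{[p,1],\,]p,1]\}$ and $\{\rsf G,\rsf H\} = \{[q,1],\,]q,1]\}$, so $\rsf G$ is simultaneously one of $[p,1],]p,1]$ and one of $[q,1],]q,1]$. Non-discreteness again rules out any identity of the form $[p,1] = ]q,1]$, so $\rsf G$ has the same bracket type on both sides, which forces $p = q$ and hence $\rsf F = \rsf H$, giving $\rsf F\equiv\rsf H$ trivially.

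The only real obstacle in the whole argument is the uniqueness step for $\rsf G$ in the last case, for which the non-discreteness assumption is essential; once the characterization of $\equiv$ is extracted from the preceding theorem, the rest is mechanical.
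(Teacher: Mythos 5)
Your proof is correct and follows essentially the paper's route: both arguments handle reflexivity via $\rsf F\sqTo\rsf F=\mathcal K(\rsf F)=\Set1$, take symmetry from the definition, and settle transitivity by the preceding characterization of $\rsf F\sqTo\rsf G=\Set1$ together with non-discreteness of $\mathcal L$ to exclude the successor configuration $]g,1]=[f,1]$. The only difference is organizational: the paper's lemma proves the stronger one-sided statement that $\rsf F\sqTo\rsf G=\Set1$ and $\rsf G\sqTo\rsf H=\Set1$ imply $\rsf F\sqTo\rsf H=\Set1$ (which is also what later makes $\le$ transitive), whereas you first extract the symmetric description of $\equiv$ (equality, or a pair $\{[p,1],\,]p,1]\}$) and read off transitivity of $\equiv$ from it.
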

\begin{proof}
    Reflexive is given by $\rsf F\sqTo\rsf F=\mathcal K(\rsf F)= \Set1$.
    
    Symmetry is built into the definition.
    
    It remains to prove transitivity,  which we take care of in the 
    following lemma.
\end{proof}

\begin{lem}
    Let $\rsf F\sqTo\rsf G= \rsf G\sqTo\rsf H=\Set1$. Then $\rsf 
    F\sqTo\rsf H=\Set1$. 
\end{lem}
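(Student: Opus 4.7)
The plan is to invoke the characterization theorem stated immediately above --- namely, that $\rsf F\sqTo\rsf G=\Set1$ iff either $\rsf G\subseteq\rsf F$ or the pair $(\rsf G,\rsf F)$ equals $([g,1],]g,1])$ for some $g$ --- applying it to both hypotheses. This produces a four-way case split, and in each case the goal is to produce the same alternative for the pair $(\rsf H,\rsf F)$: either the ordinary inclusion $\rsf H\subseteq\rsf F$ or the cut-pair configuration $\rsf H=[x,1]$, $\rsf F=]x,1]$.

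Three of the four cases are routine. If both hypotheses give the inclusion alternative then transitivity of $\subseteq$ immediately delivers $\rsf H\subseteq\rsf F$. If one hypothesis gives inclusion and the other gives a cut pair, I would split on whether the distinguished boundary point lies in the remaining filter. For instance, in the case $\rsf G\subseteq\rsf F$ with $\rsf G=]h,1]$ and $\rsf H=[h,1]$, either $h\in\rsf F$ (and then $\rsf H\subseteq\rsf F$ by upward-closure) or $h\notin\rsf F$, in which case $\rsf F$ must equal $]h,1]$ exactly --- any other element of $\rsf F$ would, by upward-closure in a linear order, force $h\in\rsf F$ --- so $(\rsf H,\rsf F)$ is a cut pair. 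The dual situation with $\rsf F=]g,1]$, $\rsf G=[g,1]$ and $\rsf H\subseteq\rsf G$ splits analogously on whether $g\in\rsf H$.

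The main obstacle, and the one point where the standing non-discreteness assumption really matters, is the remaining case where both hypotheses use the cut-pair alternative: $\rsf F=]g,1]$, $\rsf G=[g,1]=]h,1]$, $\rsf H=[h,1]$. The key observation is that the double description $[g,1]=]h,1]$ forces $g$ to be the immediate successor of $h$, contradicting non-discreteness; hence this configuration simply does not occur and the three cases above exhaust the possibilities. (Without non-discreteness the lemma would genuinely fail: $h<g$ would give two elements of $\rsf H\setminus\rsf F$, and the corollary to the characterization theorem would then produce $\rsf F\sqTo\rsf H\ne\Set1$.)
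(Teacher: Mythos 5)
Your proposal is correct and follows essentially the same route as the paper: apply the preceding characterization of $\rsf F\sqTo\rsf G=\Set1$ to both hypotheses, run the resulting case analysis (with the inclusion-plus-cut cases settled by upward closure in the linear order), and use non-discreteness to rule out the double-cut configuration, exactly as in the paper's Case 3. The only difference is organizational (your clean four-way split versus the paper's nested cases), which does not change the argument.
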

\begin{proof}
	From above we know that $\rsf G\subseteq\rsf F$ or $\rsf 
	G=[g, 1]$, $\rsf F=]g, 1]$.
	
	Now if $\rsf H\subseteq\rsf G$ then either $\rsf H=\rsf G$ in 
	which case $\rsf F\sqTo\rsf H=\rsf F\sqTo\rsf G=\Set1$ or
	$\rsf H\subseteq\rsf F$,  in which case we also have $\rsf 
	F\sqTo\rsf H=\Set1$.
	
	If $\rsf H=[h, 1]$ and $\rsf G=]h, 1]$ then the cases are:
	\begin{enumerate}[{Case }1:]
	    \item $\rsf G=\rsf F$ and then $\rsf F\sqTo\rsf H=\rsf 
	    G\sqTo\rsf H=\Set1$; 
	
	    \item $\rsf G$ is a proper subset of $\rsf F$ in which 
	    case $\rsf H\subseteq\rsf F$; 
	
	    \item $\rsf G=[g, 1]$ and $\rsf F=]g, 1]$. But now $g$ is the successor of $h$ -- 
	    contradicting the fact there are no discrete points.
	\end{enumerate}
\end{proof}

In order to show that $\equiv$ is a 
congruence relation, we need the following lemma.

\begin{lem}
    Let $\rsf F\sqTo\rsf G=\Set1$. Then
    $(\rsf G\sqTo\rsf H)\sqTo(\rsf F\sqTo\rsf H)=\Set1$ for any 
    filter $\rsf H$.
\end{lem}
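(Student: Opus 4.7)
The plan is to reformulate via the corollary stated just above: $\rsf A\sqTo\rsf B=\Set1$ iff $\card{\rsf B\setminus\rsf A}\leq 1$. The hypothesis $\rsf F\sqTo\rsf G=\Set1$ then translates to $\card{\rsf G\setminus\rsf F}\leq 1$, and the goal becomes $\card{(\rsf F\sqTo\rsf H)\setminus(\rsf G\sqTo\rsf H)}\leq 1$.

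If $\rsf G\subseteq\rsf F$, intersecting with $\rsf H$ preserves the inclusion, so \propref{prop:revIncl} yields $\rsf F\sqTo\rsf H\subseteq\rsf G\sqTo\rsf H$ and the difference is empty.

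Otherwise the characterization theorem forces $\rsf F=]g,1]$ and $\rsf G=[g,1]$ for some $g$, and we split on whether $g\in\rsf H$. If $g\notin\rsf H$, then $\rsf H\subseteq\rsf F$, so by \propref{prop:inclOne} both $\rsf F\sqTo\rsf H$ and $\rsf G\sqTo\rsf H$ collapse to $\mathcal K(\rsf H)=\Set1$. If $g\in\rsf H$, then since filters in $\mathcal L$ are linearly ordered by inclusion we have $\rsf G\subseteq\rsf H$, and the formula $z\in\rsf F\sqTo\rsf H$ iff $\forall f>g\ z\otimes f\in\rsf H$ gives
$$
(\rsf F\sqTo\rsf H)\setminus(\rsf G\sqTo\rsf H)=\Set{z\in\rsf F\sqTo\rsf H | z\otimes g\notin\rsf H}.
$$

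\textbf{The main obstacle} is showing this last set has at most one element. The argument uses continuity of $\otimes$ in the non-discrete linearly ordered algebra $\mathcal L$ to obtain $z\otimes g=\inf_{f>g}(z\otimes f)$, and combines this with the cut structure of $\rsf H$: since each $z\otimes f\in\rsf H$ while $z\otimes g\notin\rsf H$, the inf being approached from inside $\rsf H$ forces $\rsf H$ to be coprincipal, say $\rsf H=]\alpha,1]$, with $z\otimes g=\alpha$. Strict monotonicity of $\otimes$ where its value is positive then forces $z$ to be unique when $\alpha>0$; when $\alpha=0$, the constraints $z\leq\lnot g$ (from $z\otimes g=0$) and $z\geq\lnot g$ (from $z\otimes f>0$ for all $f>g$) pin $z=\lnot g$.
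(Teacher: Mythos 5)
Your proof is correct, but in the heart of the argument it takes a genuinely different route from the paper's. Both proofs share the forced skeleton: translate the goal into $\card{(\rsf F\sqTo\rsf H)\setminus(\rsf G\sqTo\rsf H)}\le 1$, dispose of the case $\rsf G\subseteq\rsf F$, and then put $\rsf F=]g,1]$, $\rsf G=[g,1]$. From there the paper splits on whether $\rsf H$ is coprincipal: if $\rsf H=]h,1]$ it simply computes both sides via \propref{prop:prOne} and \propref{prop:prTwo}, and if not it argues purely with $\to$ that the two subordinates can differ neither by two points nor by exactly one point (the one-point case using non-coprincipality), so they coincide. You instead split on whether $g\in\rsf H$, pass to the $\otimes$-description, and show that any $z$ in the difference forces $\rsf H=]z\otimes g,1]$, after which strict monotonicity (or, when $\alpha=0$, the squeeze $\lnot g\le z\le\lnot g$) pins $z$ down; coprincipality of $\rsf H$ emerges as a consequence rather than being a case hypothesis. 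Your route buys a sharper picture of where the single exceptional point can sit, but it costs two facts that the paper never proves and that you only assert: the infimum identity $z\otimes g=\inf_{f>g}(z\otimes f)$ (calling this ``continuity'' is delicate, since $\mathcal L$ need not be complete, though this particular infimum does exist by non-discreteness) and strict monotonicity of $\otimes$ where its value is positive. Both are true in an MV-chain and are provable with the paper's toolkit: strict monotonicity follows from the identity $(z\otimes g)\oplus\lnot g=z\join\lnot g$ (equal positive products with a common $g$ would give $z_{1}\join\lnot g=z_{2}\join\lnot g$, forcing the larger $z_{i}$ below $\lnot g$ and the product to be $0$), and the infimum identity then follows from convexity of $\Set{z\otimes c | c\geq g}$ together with non-discreteness. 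With those two lemmas written out, your argument is complete; the paper's $\to$-based bookkeeping avoids needing them at all.
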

\begin{proof}
    This is immediate if $\rsf G\subseteq\rsf F$ as we apply 
    \propref{prop:inclOne}.
    
    So suppose that $\rsf G=[g, 1]$, $\rsf F=]g, 1]$.
    
    If $\rsf H=]h, 1]$ we apply propositions \ref{prop:prOne} and 
    \ref{prop:prTwo} to get the result. Hence we may assume that 
    $\rsf H$ is not coprincipal.
    
    We first show that $\rsf F\sqTo\rsf H$ and $\rsf G\sqTo\rsf H$ 
    cannot differ by two points. Suppose that $a_{1}<a_{2}$ are in 
    $\rsf F\sqTo\rsf G$ but not in $\rsf G\sqTo\rsf H$. Thus if 
    $h\notin\rsf H$ then $a_{i}\to h\notin\rsf F$. We know that 
    $a_{i}\notin\rsf G\sqTo\rsf H$ and so there exists $h_{1}, 
    h_{2}\notin\rsf H$ such that $a_{i}\to h_{i}\in\rsf 
    G\setminus\rsf F=\Set{g}$. Hence
    $a_{1}\to h_{1}= a_{2}\to h_{2}= g$. As $a_{1}<a_{2}$ this means 
    that $h_{1}<h_{2}$. Hence 
    $a_{1}\to h_{2}> a_{2}\to h_{2}=g$ and so $a_{1}\to h_{2}\in\rsf 
    F$ -- contradiction. 
    
    If the two sets differ at exactly one point then we must have 
    $\rsf F\sqTo\rsf H=[p, 1]$ for some $p$. We want to show that 
    $p\in\rsf G\sqTo\rsf H$. We have $p\in\rsf F\sqTo\rsf H$ and so 
    $p\to h\notin\rsf F$ for all $h\notin\rsf H$. 
    
    If $p\notin\rsf G\sqTo\rsf H$ then there is some $h\notin\rsf H$ 
    with $p\to h=g$. As $\rsf H$ is not 
    coprincipal  there is some 
    $h<h'<\rsf H$. But now we have 
    $p\to h=g< p\to h'$ and so $p\to h'\in\rsf F$ -- contradiction. 
    
    So this case cannot happen,  and we have $\rsf F\sqTo\rsf H=\rsf 
    G\sqTo\rsf H$. 
\end{proof}

\begin{thm}
    $\equiv$ is a congruence wrt ${\bullet}^{+}$ and 
    $\bullet\sqTo\bullet$.
\end{thm}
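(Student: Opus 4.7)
The goal is to verify that (i) $\rsf F\equiv\rsf F'$ implies $\rsf F^{+}\equiv\rsf F'^{+}$, and (ii) $\rsf F\equiv\rsf F'$ together with $\rsf G\equiv\rsf G'$ implies $\rsf F\sqTo\rsf G\equiv\rsf F'\sqTo\rsf G'$. Since $\equiv$ has just been shown to be an equivalence relation, the second can be handled in two halves via transitivity.

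For (i) the whole weight is carried by \propref{prop:plus}: $\rsf F\sqTo\rsf F'=\rsf F'^{+}\sqTo\rsf F^{+}$ and $\rsf F'\sqTo\rsf F=\rsf F^{+}\sqTo\rsf F'^{+}$, so $\rsf F\equiv\rsf F'$ immediately yields $\rsf F^{+}\sqTo\rsf F'^{+}=\rsf F'^{+}\sqTo\rsf F^{+}=\Set1$, i.e.\ $\rsf F^{+}\equiv\rsf F'^{+}$.

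For (ii) I would first replace the left argument: show $\rsf F\sqTo\rsf G\equiv\rsf F'\sqTo\rsf G$. This is exactly where the preceding lemma does its work. Since $\rsf F\sqTo\rsf F'=\Set1$, applying the lemma with $\rsf H=\rsf G$ gives $(\rsf F'\sqTo\rsf G)\sqTo(\rsf F\sqTo\rsf G)=\Set1$; the reverse direction follows by swapping the roles of $\rsf F$ and $\rsf F'$ using $\rsf F'\sqTo\rsf F=\Set1$. Next, to replace the right argument and show $\rsf F'\sqTo\rsf G\equiv\rsf F'\sqTo\rsf G'$, rewrite both sides with \propref{prop:plus} as $\rsf G^{+}\sqTo\rsf F'^{+}$ and $\rsf G'^{+}\sqTo\rsf F'^{+}$; by (i) we have $\rsf G^{+}\equiv\rsf G'^{+}$, and so the previous left-argument argument (applied to $\rsf G^{+},\rsf G'^{+}$ with fixed right argument $\rsf F'^{+}$) gives the desired equivalence. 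Transitivity of $\equiv$ then yields $\rsf F\sqTo\rsf G\equiv\rsf F'\sqTo\rsf G'$.

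The main obstacle is the substitution on the \emph{right} argument of $\sqTo$: the preceding lemma is stated with the varying filter on the left, so directly one only gets congruence in the first coordinate. The key trick is to route the right-coordinate substitution through $\bullet^{+}$ via \propref{prop:plus}, converting it into a left-coordinate substitution for the filters $\rsf G^{+}$ and $\rsf G'^{+}$ (which are equivalent by the unary case already proved). Everything else is a mechanical appeal to the lemma and to the fact that $\equiv$ is transitive.
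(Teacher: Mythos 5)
Your proposal is correct and follows essentially the same route as the paper: use \propref{prop:plus} for the $\bullet^{+}$ case, use the preceding lemma for substitution in the left argument of $\sqTo$, convert right-argument substitution into a left-argument one via $\rsf H\sqTo\rsf G=\rsf G^{+}\sqTo\rsf H^{+}$, and finish by transitivity. The only difference is the immaterial order in which the two coordinates are replaced.
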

\begin{proof}
    As $\rsf F\sqTo\rsf G=\rsf G^{+}\sqTo\rsf F^{+}$ it is immediate 
    that $\rsf F\equiv\rsf G$ implies $\rsf F^{+}\equiv\rsf G^{+}$.
    
    From the lemma we can deduce that $\rsf F\equiv\rsf G$ 
    implies $\rsf G\sqTo\rsf H\equiv \rsf F\sqTo\rsf H$. 
    
    We also have $\rsf H\sqTo\rsf F\equiv\rsf H\sqTo\rsf G$ as 
    $\rsf F^{+}\equiv\rsf G^{+}$ and so
    $\rsf H\sqTo\rsf F= \rsf F^{+}\sqTo\rsf H^{+}\equiv \rsf 
    G^{+}\sqTo\rsf H^{+}= \rsf H\sqTo\rsf G$. 
    
    Finally,  if $\rsf F_{1}\equiv\rsf F_{2}$ and $\rsf 
    G_{1}\equiv\rsf G_{2}$ we have 
    $\rsf F_{1}\sqTo\rsf G_{1}\equiv \rsf F_{1}\sqTo\rsf G_{2} 
    \equiv\rsf F_{2}\sqTo\rsf G_{2}$.
\end{proof}

%

Now we want to know what happens to $(\rsf F\sqTo\rsf G)\sqTo\rsf G$ when $\rsf F\subseteq\rsf G$. 

We know that
$\rsf F\subseteq (\rsf F\sqTo\rsf G)\sqTo\rsf G$ from \lemref{lem:FFg}
and so we have the following corollary.
\begin{cor}
	$$
		\rsf F\sqTo\rsf G= ((\rsf F\sqTo\rsf G)\sqTo\rsf 
		G)\sqTo\rsf G.
	$$
\end{cor}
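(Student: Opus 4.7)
My plan is to prove the corollary by a pair of inclusions, both of which reduce to applying \lemref{lem:FFg} and the antitonicity of $\sqTo$ in the first coordinate.

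First I would set $\rsf H=\rsf F\sqTo\rsf G$ for notational ease; the goal becomes $\rsf H = (\rsf H\sqTo\rsf G)\sqTo\rsf G$. By \propref{prop:incl} we already have $\rsf H\subseteq\rsf G$, so \lemref{lem:FFg} applies to the pair $\rsf H\subseteq\rsf G$, yielding
$$
\rsf H\subseteq (\rsf H\sqTo\rsf G)\sqTo\rsf G,
$$
which is the ``$\subseteq$'' half.

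For the reverse inclusion I would use the hypothesis $\rsf F\subseteq\rsf G$ together with \lemref{lem:FFg} (as stated) to get $\rsf F\subseteq \rsf H\sqTo\rsf G$. Since $\rsf H\sqTo\rsf G\subseteq\rsf G$ by \propref{prop:incl}, we now have $\rsf F\subseteq \rsf H\sqTo\rsf G\subseteq\rsf G$, and \propref{prop:revIncl} gives
$$
(\rsf H\sqTo\rsf G)\sqTo\rsf G \subseteq \rsf F\sqTo\rsf G = \rsf H,
$$
which completes the second inclusion.

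Combining the two inclusions yields the desired equality. There is no real obstacle here: the content was already absorbed into \lemref{lem:FFg}, and the corollary is essentially the observation that applying $\bullet\sqTo\rsf G$ twice more to $\rsf F\sqTo\rsf G$ collapses back to $\rsf F\sqTo\rsf G$, which is forced by the one-sided inclusion of \lemref{lem:FFg} used together with the order-reversing property in the first argument.
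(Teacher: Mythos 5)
Your proposal is correct and follows essentially the same route as the paper: the inclusion $\rsf F\sqTo\rsf G\subseteq((\rsf F\sqTo\rsf G)\sqTo\rsf G)\sqTo\rsf G$ by applying \lemref{lem:FFg} with $\rsf F\sqTo\rsf G$ in place of $\rsf F$, and the reverse inclusion by applying $\bullet\sqTo\rsf G$ to $\rsf F\subseteq(\rsf F\sqTo\rsf G)\sqTo\rsf G$ and using the antitonicity of \propref{prop:revIncl}. You merely spell out the intermediate steps (the use of \propref{prop:incl} to justify the hypotheses) that the paper leaves implicit.
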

\begin{proof}
	Taking $\rsf F$ as $\rsf F\sqTo\rsf G$ in the lemma we get LHS$\subseteq$RHS.
	
	As $\rsf F\subseteq(\rsf F\sqTo\rsf G)\sqTo\rsf G$ applying $\sqTo\rsf G$ to both sides reverses
	the inclusion and so RHS$\subseteq$LHS.
\end{proof}

\begin{prop}\label{prop:axiomG}
    Let $\rsf F_{G}=(\rsf F\sqTo\rsf G)\sqTo\rsf G$. Then
    $\rsf F\equiv\rsf F_{G}$.
\end{prop}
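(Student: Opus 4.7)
The plan is to use the corollary immediately preceding this proposition together with the earlier characterization theorem for $\rsf F\sqTo\rsf G=\Set1$. Recall we need both $\rsf F\sqTo\rsf F_{G}=\Set1$ and $\rsf F_{G}\sqTo\rsf F=\Set1$.

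First I would dispose of the easy half. By \lemref{lem:FFg} we have $\rsf F\subseteq\rsf F_{G}$, and the characterization theorem says $\rsf F_{G}\sqTo\rsf F=\Set1$ whenever $\rsf F\subseteq\rsf F_{G}$. So one of the two $\sqTo$-equalities comes for free.

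The substantive content is showing $\rsf F\sqTo\rsf F_{G}=\Set1$. By the characterization theorem this is equivalent to either $\rsf F_{G}\subseteq\rsf F$ (combined with the known $\rsf F\subseteq\rsf F_{G}$, this would force $\rsf F=\rsf F_{G}$) or the coprincipal-over-principal situation $\rsf F_{G}=[g,1]$, $\rsf F=]g,1]$. Equivalently, by its corollary, I just need to show $\card{\rsf F_{G}\setminus\rsf F}\leq 1$, with the one-point case having the right shape.

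The key input is the corollary just above, which gives $\rsf F\sqTo\rsf G=\rsf F_{G}\sqTo\rsf G$. Now I apply the contrapositive of the proposition that says: if $\rsf F_{1}\subseteq\rsf F_{2}$ with $\card{\rsf F_{2}\setminus\rsf F_{1}}\geq 2$ and $\rsf G$ contains $\rsf F_{2}$, then $\rsf F_{1}\sqTo\rsf G\neq\rsf F_{2}\sqTo\rsf G$. Taking $\rsf F_{1}=\rsf F$ and $\rsf F_{2}=\rsf F_{G}$ (noting $\rsf F_{G}\subseteq\rsf G$ by \propref{prop:incl}), the equality $\rsf F\sqTo\rsf G=\rsf F_{G}\sqTo\rsf G$ forces $\card{\rsf F_{G}\setminus\rsf F}<2$.

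The final step is to handle the one-point difference case. If $\rsf F_{G}\setminus\rsf F=\Set g$, then since the ambient MV-algebra is linearly ordered, the only way two filters can differ in exactly one element is for the larger to include $g$ as its minimum and the smaller to be its strict upper set; that is $\rsf F_{G}=[g,1]$ and $\rsf F=]g,1]$. This is precisely the configuration required by the characterization theorem, so $\rsf F\sqTo\rsf F_{G}=\Set1$. I do not expect any serious obstacle here; the result is essentially a bookkeeping combination of the corollary above with \propref{prop:revIncl}'s strengthening and the classification of filter-pairs with trivial $\sqTo$.
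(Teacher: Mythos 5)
Your proposal is correct and follows essentially the same route as the paper: the inclusion $\rsf F\subseteq\rsf F_{G}$ from \lemref{lem:FFg}, the equality $\rsf F\sqTo\rsf G=\rsf F_{G}\sqTo\rsf G$ from the preceding corollary, and then the two-point-difference proposition together with the characterization of when $\rsf F\sqTo\rsf G=\Set1$ to conclude $\rsf F\sqTo\rsf F_{G}=\Set1$. The paper compresses this last chain into the phrase ``the theorem then implies,'' so your write-up is just a more explicit version of the same argument (both, like the paper, tacitly work under the standing assumption $\rsf F\subseteq\rsf G$ of this subsection).
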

\begin{proof}
    We know that $\rsf F\subseteq\rsf F_{G}$ from \lemref{lem:FFg}.
    
    We also know that $\rsf F\sqTo\rsf G= \rsf F_{G}\sqTo\rsf G$. 
    The theorem then implies 
    $\rsf F\sqTo\rsf F_{G}=\Set1$. 
\end{proof}

As usual we define $\le$ by $\rsf F\le\rsf G$ iff $\rsf F\sqTo\rsf 
G=\Set1$. From above we know that 
$\rsf F\le\rsf G$ iff $\rsf G\subseteq\rsf F$ or $\rsf G=[g, 1]$ and 
$\rsf F=]g, 1]$.

\begin{thm}
    Let $P$ be a prime implication filter. 
    Let 
    $$
    	\hat{\mathcal L}_{P}=\brk<\text{PSpec}(P)/\equiv,  
	\bullet^{+},  \sqTo>.
    $$ 
    Then $\hat{\mathcal L}_{P}$ is a linearly ordered MV-algebra. 
\end{thm}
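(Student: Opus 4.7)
The plan is to first reduce to the case where $\mathcal L$ is linearly ordered and every filter in sight has trivial kernel $\Set 1$, and then to verify the MV-algebra axioms one at a time, the genuine difficulty being the \L ukasiewicz axiom. Because $P$ is prime, $\mathcal L/P$ is a linearly ordered MV-algebra. \propref{prop:quot} shows that $\sqTo$ commutes with the quotient by any implication filter contained in the common kernel, and $\bullet^+$ obviously does as well. Hence the quotient map $\eta_P$ carries $\text{PSpec}(P)$ bijectively onto the prime filters of $\mathcal L/P$ with kernel $\Set 1$, compatibly with $\sqTo$, $\bullet^+$ and $\equiv$. I may therefore assume throughout that $\mathcal L$ is a linearly ordered MV-algebra and every $\rsf F$ considered has $\mathcal K(\rsf F)=\Set 1$.

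Linearity of the order on $\text{PSpec}(P)/\equiv$ is then automatic: in a linearly ordered MV-algebra the set of filters is a chain under inclusion, so any two prime filters are comparable. Defining $\rsf F\le\rsf G$ iff $\rsf F\sqTo\rsf G=\Set 1$, the characterization established just before the theorem shows $\le$ is a linear order on equivalence classes, with top $\Set 1$ and bottom $]0,1]$. The congruence property of $\equiv$ for $\sqTo$ and $\bullet^+$ has already been proved, so all operations descend to $\text{PSpec}(P)/\equiv$.

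For the MV-algebra axioms, the easy ones are immediate from earlier results. Double negation $\rsf F^{++}=\rsf F$ is standard; the contrapositive identity $\rsf F\sqTo\rsf G=\rsf G^+\sqTo\rsf F^+$ is \propref{prop:plus}; the exchange/permutation identity $\rsf F\sqTo(\rsf G\sqTo\rsf H)=\rsf G\sqTo(\rsf F\sqTo\rsf H)$ is \propref{prop:axiomC}; reflexivity $\rsf F\sqTo\rsf F=\mathcal K(\rsf F)=\Set 1$ follows from \propref{prop:inclOne}; and the unit laws follow from \propref{prop:OneOne}. Together with the congruence theorem these yield every axiom except the \L ukasiewicz one.

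The hard part is the \L ukasiewicz axiom $(\rsf F\sqTo\rsf G)\sqTo\rsf G\equiv(\rsf G\sqTo\rsf F)\sqTo\rsf F$, and here linearity is crucial. By symmetry I may assume $\rsf F\subseteq\rsf G$. Then \propref{prop:axiomG} gives $(\rsf F\sqTo\rsf G)\sqTo\rsf G\equiv\rsf F$. For the other side, \propref{prop:inclOne} gives $\rsf G\sqTo\rsf F=\mathcal K(\rsf F)=\Set 1$, and then \propref{prop:OneOne} yields $\Set 1\sqTo\rsf F=\mathcal K(\rsf F)\sqTo\rsf F=\rsf F$. So both sides are $\equiv\rsf F$, which closes the argument. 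Once this axiom is in place, $\hat{\mathcal L}_P$ satisfies all the defining identities of a linearly ordered MV-algebra.
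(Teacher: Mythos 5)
Your proposal is correct and follows essentially the same route as the paper: reduce (via \propref{prop:quot} and the kernel theorem) to the linearly ordered, trivial-kernel setting, then verify the axioms one by one from the earlier propositions, handling the \L ukasiewicz axiom exactly as the paper does --- \propref{prop:axiomG} for $(\rsf F\sqTo\rsf G)\sqTo\rsf G\equiv\rsf F$ when $\rsf F\subseteq\rsf G$, and \propref{prop:inclOne} together with \propref{prop:OneOne} for the other side. The only difference is cosmetic: the paper spells out the lattice-compatibility items ($\zero$, $\one$, order reversal under $\bullet^{+}$, and the meet/join laws via \propref{prop:revIncl}) which you fold into ``the easy ones.''
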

\begin{proof}
    \begin{enumerate}[(a)]
        \item  $\zero=]0, 1]\le\rsf F\le \one=\Set1$ for all $\rsf F$.
    
        \item  $\rsf F\le\rsf G$ iff $\rsf G^{+}\le\rsf F^{+}$ -- by 
	(e).
    
        \item  $\rsf F\sqTo(\rsf H\sqTo\rsf G)= \rsf H\sqTo(\rsf 
	F\sqTo\rsf G)$ by \propref{prop:axiomC}.
    
        \item  $\rsf F\sqTo\rsf F= \mathcal K(\rsf F)= \one$. 
    
        \item  $\rsf F\sqTo\rsf G= \rsf G^{+}\sqTo\rsf F$ by \propref{prop:plus}.
    
        \item  If $\rsf F\sqTo\rsf G= \rsf G\sqTo\rsf F= \one$ then 
	$\rsf F\equiv\rsf G$,  by definition of $\equiv$.
    
        \item  $(\rsf F\sqTo\rsf G)\to\rsf G= (\rsf G\sqTo\rsf 
	F)\sqTo\rsf F$ as if $\rsf F\subseteq\rsf G$ then 
	$(\rsf F\sqTo\rsf G)\to\rsf G\equiv\rsf F$ by \propref{prop:axiomG}. 
	
	And $\rsf G\sqTo\rsf F=\one$ and $\one\to\rsf F=\rsf F$ by 
	propositions \ref{prop:inclOne} and \ref{prop:OneOne}.
    
        \item  $(\rsf F\join\rsf G)\sqTo\rsf H= (\rsf F\sqTo\rsf 
	H)\meet (\rsf G\sqTo\rsf H)$ as we are in a linear order and 
	we have \propref{prop:revIncl}.
    
        \item  $(\rsf F\meet\rsf G)\sqTo\rsf H= (\rsf F\sqTo\rsf 
	H)\join (\rsf G\sqTo\rsf H)$ as we are in a linear order and 
	we have \propref{prop:revIncl}.
    \end{enumerate}
\end{proof}

\subsection{Another look at $\Phi$}
Inside $\hat{\mathcal L}_{P}$ we define $\otimes$ in terms of $\sqTo$ 
and $\bullet^{+}$ as usual in MV-algebras:
$$
\rsf F\otimes\rsf G=(\rsf F\sqTo\rsf G^{+})^{+}.
$$
We noted above (see subsection \ref{secsec:Phi}) that 
$$
\Phi(\rsf F, \rsf G)=\rsf F\otimes\rsf G.
$$
Another natural way to define $\otimes$ is to use the operation of 
$\mathcal L$ directly and let 
$$
T(\rsf F, \rsf G)=\Set{f\otimes g | f\in\rsf F\text{ and }g\in\rsf 
G}\uparrow.
$$

\begin{prop}
    If $\rsf F, \rsf G$ are in $\hat{\mathcal L}_{P}$ then
    $$T(\rsf F, \rsf G)=\rsf F\otimes\rsf G.
    $$
\end{prop}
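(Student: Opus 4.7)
The plan is to establish $T(\rsf F, \rsf G) = \Phi(\rsf F, \rsf G)$ directly as subsets of $\mathcal L$, since subsection~\ref{secsec:Phi} already identifies $\Phi(\rsf F, \rsf G) = (\rsf F\sqTo\rsf G^{+})^{+} = \rsf F\otimes\rsf G$, so the desired equality is immediate from this pair of identifications. Both $T$ and $\Phi$ are visibly upward-closed and a short check shows they are meet-closed, so the comparison is between two filters.

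First I would prove $T(\rsf F, \rsf G) \subseteq \Phi(\rsf F, \rsf G)$. A typical element $z$ of $T$ satisfies $z \geq f\otimes g$ for some $f\in\rsf F$ and $g\in\rsf G$. By the residuation $f\otimes g \leq z \iff g \leq f\to z$, we obtain $f\to z \geq g$, so $f\to z\in\rsf G$ by upward closure, and thus $z\in\Phi(\rsf F, \rsf G)$ with witness $f$.

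For the reverse inclusion, let $z\in\Phi(\rsf F, \rsf G)$, so some $f\in\rsf F$ satisfies $g := f\to z \in \rsf G$. Using the standard MV-algebra identity $f\otimes(f\to z) = f\meet z$, we have $f\otimes g = f\meet z \leq z$, so $z$ lies above a generator of $T(\rsf F, \rsf G)$ and hence belongs to $T(\rsf F, \rsf G)$.

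The main ``obstacle'' is really just the bookkeeping: recognising $T$ and $\Phi$ as two presentations of the same filter -- one via the generators $f\otimes g$, the other via the adjoint description using $\to$ -- after which the proof is a one-line application of the adjunction $f\otimes(-) \dashv f\to(-)$ together with the identity $x\otimes(x\to y)=x\meet y$. Notice that nothing in this argument uses the $\hat{\mathcal L}_P$ structure beyond the fact that $\rsf F$ and $\rsf G$ are filters; the hypothesis $\rsf F, \rsf G\in\hat{\mathcal L}_P$ only serves to give meaning to the symbol $\rsf F\otimes\rsf G$ on the right-hand side.
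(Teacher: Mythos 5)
Your proof is correct, and it reaches the conclusion by a slightly different organization than the paper, though the underlying algebra is the same. The paper's proof never mentions $\Phi$ explicitly: it introduces the auxiliary ideal $I=\{y\mid (y\oplus\rsf G^{*})\cap\rsf F=\emptyset\}$, shows $T(\rsf F,\rsf G)$ is exactly its complement, and then unwinds $(\rsf F\sqTo\rsf G^{+})^{+}$ on the spot to identify it with that complement. Your two inclusions are precisely the algebraic content of the paper's first two steps: the paper's verification that $y\notin I$ forces $y\in T$ is your divisibility step $g\otimes(g\to y)=g\meet y\le y$, and its check that the generators $f\otimes g$ avoid $I$ (via $(f\otimes g)\oplus\lnot g=f\join\lnot g\in\rsf F$) is your residuation step $f\otimes g\le z\iff g\le f\to z$ written in complemented form. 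Where you differ is in replacing the paper's third step by a citation: you prove $T(\rsf F,\rsf G)=\Phi(\rsf F,\rsf G)$ outright and then invoke the identity $\Phi(\rsf F,\rsf G)=(\rsf F\sqTo\rsf G^{+})^{+}=\rsf F\otimes\rsf G$ from subsection~\ref{secsec:Phi}, which the paper itself restates immediately before this proposition, so the appeal is legitimate and carries no more hidden assumptions than the paper's own inline computation. What your route buys is the absence of the auxiliary ideal and the explicit observation that $T=\Phi$ holds for arbitrary lattice filters, the hypothesis $\rsf F,\rsf G\in\hat{\mathcal L}_{P}$ serving only to make $\rsf F\otimes\rsf G$ meaningful; what the paper's route buys is self-containedness, since it rederives the $\Phi$-style computation rather than referring back to it. Your remark that $T$ and $\Phi$ are meet-closed is unused and could be dropped without loss.
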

\begin{proof}
    Consider the ideal 
    $$
    I=\Set{y | (y\oplus\rsf G^{*})\cap\rsf F=\emptyset}.
    $$
    \begin{enumerate}[(i)]
        \item  $I\cap T=\emptyset$. 
	
	If $f\in\rsf F$ and $g\in\rsf G$ then $f\otimes g\not in I$ 
	as $\lnot g\in\rsf G^{*}$ and so
	\begin{align*}
	    (f\otimes g)\oplus\lnot g &= \lnot (\lnot f\oplus\lnot 
	    g)\oplus\lnot g\\
	    &= (f\to\lnot g)\to\lnot g\\
	    &= f\join\lnot g\in\rsf F.
	\end{align*}
    
        \item  If $y\notin I$ then $y\in T$.
	
	If $y\notin I$ then there is some $g\in \rsf G$ such that 
	$y\oplus\lnot g\in\rsf F$. But then we have 
	$g\in\rsf G$, $g\to y= y\oplus\lnot g\in\rsf F$ and so
	$g\otimes(g\to y)\le y$ implies $y\in T$.
    
        \item  $\mathcal L\setminus I= (\rsf F\sqTo\rsf G^{+})^{+}$ as 
	\begin{align*}
	    y\in (\rsf F\sqTo\rsf G^{+})^{+}& \text{ iff }\lnot 
	    y\notin \rsf F\sqTo\rsf G^{+}  \\
	     & \text{ iff }\exists k\notin\rsf G^{+}\ \lnot y\to 
	     k\in\rsf F  \\
	     & \text{ iff }\exists k\ \lnot k\in\rsf G \text{ and 
	     }y\oplus k=\lnot y\to k\in\rsf F \\
	     & \text{ iff } \exists k\  k\in\rsf G^{*} \text{ and 
	     }y\oplus k \in\rsf F \\
	     & \text{ iff }(y\oplus\rsf G^{*})\cap\rsf F\not=\emptyset
	\end{align*}
    \end{enumerate}
    Hence we have 
    $$
    T=\mathcal L\setminus I= (\rsf F\sqTo\rsf G^{+})^{+}= \rsf 
    F\otimes\rsf G.
    $$
\end{proof}

\section{Connecting the pieces}
In this section we consider how the algebras $\hat{\mathcal L}_{P}$
and $\hat{\mathcal L}_{Q}$ are related to one another when 
$P\subseteq Q$. And we consider the connection with the quotient 
$\mathcal L/P$. 

\begin{defn}
    Let $\rsf F$ be a prime lattice filter and $P$ a prime implication 
    filter that properly contains $\mathcal K(\rsf F)$.
    $q_{P}(\rsf F)$
    is the unique $P$-coset $C$ such that $C\cap\rsf 
    F\not=\emptyset\not= C\setminus\rsf F$.
\end{defn}

\begin{thm}
    Let $P\subsetneq Q$ be two prime implication filters. Then
    $$
	\hat\eta_{PQ}\colon\hat{\mathcal L}_{P}\to\mathcal L/Q
    $$
    defined by 
    $$
	\hat\eta_{PQ}(\rsf F)= q_{Q}(\rsf F)
    $$
    is an MV-morphism.
\end{thm}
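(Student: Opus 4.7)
The plan is to reduce to the linearly ordered quotient via Proposition~\ref{prop:quot}, verify $\hat\eta_{PQ}$ is well-defined on $\equiv$-classes, and then check preservation of $\sqTo$ and $\bullet^{+}$ (which generate the MV-structure). Through $\eta_{P}$ the operations on $\text{PSpec}(P)$ mirror those in $\mathcal L/P$, which is linearly ordered since $P$ is prime, with $Q/P$ a nontrivial prime implication filter. First I would verify $q_{Q}(\rsf F)$ exists: $Q$-cosets are convex, so if none straddled $\rsf F$ then $\rsf F$ would be a union of $Q$-cosets, hence closed under $\sim_{Q}$, forcing $Q\subseteq\mathcal K(\rsf F)=P$ and contradicting $P\subsetneq Q$. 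Uniqueness is immediate from linearity.

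For well-definedness on $\equiv$-classes: if $\rsf F\equiv\rsf G$ with $\rsf F\neq\rsf G$, then in $\mathcal L/P$ we have $\{\rsf F/P,\rsf G/P\}=\{\,]g,1],[g,1]\,\}$ for some $g$. The same argument forces $[g]_{Q}$ to contain elements on both sides of $g$ (otherwise one of the two filters would be $Q$-closed), so $q_{Q}(\rsf F)=q_{Q}(\rsf G)=[g]_{Q}$. For preservation of $\bullet^{+}$: if $[a]_{Q}=q_{Q}(\rsf F)$, then the equivalence $x\in\rsf F\iff\lnot x\notin\rsf F^{+}$ transfers straddling to $[\lnot a]_{Q}$ over $\rsf F^{+}$, giving $\hat\eta_{PQ}(\rsf F^{+})=\lnot\hat\eta_{PQ}(\rsf F)$ in $\mathcal L/Q$.

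Preservation of $\sqTo$ is the main obstacle. By linearity we may assume $\rsf F\subseteq\rsf G$ (the opposite case gives $\rsf F\sqTo\rsf G=P$ via \propref{prop:inclOne}, matching $1$ in $\mathcal L/Q$). Choose $a,b$ with $[a]_{Q}=q_{Q}(\rsf F)$, $[b]_{Q}=q_{Q}(\rsf G)$; since $Q$ is an implication filter, $[a\to b]_{Q}$ is independent of representatives. To show $[a\to b]_{Q}=q_{Q}(\rsf F\sqTo\rsf G)$, pick $a_{0}\in[a]_{Q}\cap\rsf F$, $a_{1}\in[a]_{Q}\setminus\rsf F$, $b_{0}\in[b]_{Q}\cap\rsf G$, $b_{1}\in[b]_{Q}\setminus\rsf G$. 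In the linear order $a_{0}>b_{1}$, so $(a_{0}\to b_{1})\to b_{1}=a_{0}\vee b_{1}=a_{0}\in\rsf F$ while $b_{1}\notin\rsf G$, showing $a_{0}\to b_{1}\notin\rsf F\sqTo\rsf G$. When $[a]_{Q}\neq[b]_{Q}$ we have $a_{1}>b_{0}$, and for any $c\notin\rsf G$ monotonicity gives $(a_{1}\to b_{0})\to c\le(a_{1}\to b_{0})\to b_{0}=a_{1}\vee b_{0}=a_{1}\notin\rsf F$, so $(a_{1}\to b_{0})\to c\notin\rsf F$ (as $\rsf F$ is upward-closed), yielding $a_{1}\to b_{0}\in\rsf F\sqTo\rsf G$. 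The degenerate case $[a]_{Q}=[b]_{Q}$ is handled separately: here $a\to b\in Q$, so $[a\to b]_{Q}=[1]_{Q}$, and producing $f\in[a]_{Q}\cap\rsf F$, $g\in[a]_{Q}\setminus\rsf G$ gives $f\to g\in Q\setminus(\rsf F\sqTo\rsf G)$, confirming $[1]_{Q}$ straddles $\rsf F\sqTo\rsf G$, matching $[a]_{Q}\to[b]_{Q}=1$ in $\mathcal L/Q$.
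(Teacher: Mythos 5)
Your proposal is correct and follows essentially the same route as the paper: reduce to $\mathcal L/P$, transfer the straddling coset through negation to get $q(\rsf F^{+})=\lnot q(\rsf F)$, and show $q(\rsf F)\to q(\rsf G)$ straddles $\rsf F\sqTo\rsf G$ using exactly the witnesses $a_{0}\to b_{1}$ (outside) and $a_{1}\to b_{0}$ (inside), which the paper imports from its kernel theorem. The extra bookkeeping you supply (existence and uniqueness of $q_{Q}$, well-definedness on $\equiv$-classes, the degenerate coset case) is sound and only makes explicit what the paper leaves implicit.
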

\begin{proof}
    It is enough to do the case when $\mathcal L=\mathcal L/P$.
    
    First we show that $q(\rsf F^{+})=\lnot q(\rsf F)$.
    
    Let $x\in\rsf F\cap q(\rsf F)$. Then $\lnot x\notin\rsf F^{+}$ as 
    $\lnot^{2}x=x\in\rsf F$. But $\lnot x\in\lnot q(\rsf F)$ and so
    $\lnot q(\rsf F)\setminus\rsf F^{+}\not=\emptyset$.
    
    If $x\in q(\rsf F)\setminus\rsf F$ then $\lnot x\in\rsf F^{+}$
    and so $\lnot q(\rsf F)\cap\rsf F^{+}\not=\emptyset$.
    
    Now we want to show that 
    $$
	q(\rsf F\sqTo\rsf G)=q(\rsf F)\to q(\rsf G).
    $$
    We saw in the proof that $\mathcal K(\rsf F\sqTo\rsf G)=\mathcal 
    K(\rsf F)$ that if 
    $b\in q(\rsf G)\setminus\rsf G$ and $a\in q(\rsf F)\cap F$ that 
    $a\to b\notin\rsf F\sqTo\rsf G$. 
    Also if 
    $b'\in q(\rsf G)\cap\rsf G$ and $a'\in q(\rsf F)\setminus\rsf F$ 
    then $a'\to b'\in\rsf F\sqTo\rsf G$. 
    
    Hence the coset $q(\rsf F)\to\rsf G$ crosses the boundary of 
    $\rsf F\sqTo\rsf G$ and so must be $q(\rsf F\sqTo\rsf G)$.
\end{proof}

Note that we also have an MV-morphism
$\iota_{P}\colon \mathcal L/P\to\hat{\mathcal L}_{P}$ defined by
$$
    \iota_{P}([a]_{P})= P_{a}
$$

\begin{thm}
    If $\mathcal L/P$ is non-discrete then 
    $\iota_{P}$ is an injective MV-morphism.
\end{thm}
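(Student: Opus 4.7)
The plan is to reduce to working inside $\mathcal{L}/P$, which by primality of $P$ is linearly ordered, and by hypothesis non-discrete (hence densely ordered). Under $\eta_P$, the filter $P_a$ corresponds to the coprincipal cut $]\bar a,1]$ in $\mathcal{L}/P$, where $\bar a=\eta_P(a)$; thus $\iota_P$ sends each element of $\mathcal{L}/P$ to a very specific kind of cut in $\hat{\mathcal{L}}_P$. Well-definedness on cosets is item~(d) in the list of clear properties of $\mathcal{K}(\rsf F; X)$ at the start of the paper.

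To verify that $\iota_P$ preserves $\bullet^+$ and $\sqTo$, I would exploit the key observation -- already underwriting the preceding MV-algebra theorem -- that in a non-discrete linearly ordered MV-algebra one always has $[g,1]\equiv]g,1]$. On the one hand, $\iota_P(\lnot[a]_P)=]\lnot\bar a,1]$; on the other, $\iota_P([a]_P)^+=]\bar a,1]^+=[\lnot\bar a,1]$ by the earlier lemma computing $]p,1]^+$. These two cuts differ only at $\lnot\bar a$ and so coincide modulo $\equiv$. Similarly $\iota_P([a\to b]_P)=]\bar a\to\bar b,1]$, while \propref{prop:prOne} yields $\iota_P([a]_P)\sqTo\iota_P([b]_P)=]\bar a,1]\sqTo]\bar b,1]=[\bar a\to\bar b,1]$ whenever $0<\bar b<\bar a<1$; the two agree in $\hat{\mathcal{L}}_P$. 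The degenerate cases ($\bar a\leq\bar b$, or $\bar a$ or $\bar b$ equal to $0$ or $1$) reduce to \propref{prop:inclOne}, \propref{prop:OneOne} and \lemref{lem:negate}.

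For injectivity, suppose $[a]_P\neq[b]_P$ with, say, $\bar a<\bar b$. Non-discreteness combined with linearity forces density of the order, so some $\bar c$ satisfies $\bar a<\bar c<\bar b$, and hence $]\bar a,1]\setminus]\bar b,1]\supseteq\Set{\bar c, \bar b}$. The corollary $\rsf F\sqTo\rsf G\neq\Set1$ iff $\card{\rsf G\setminus\rsf F}\geq2$ then gives $]\bar b,1]\sqTo]\bar a,1]\neq\Set1$, so the two cuts are not $\equiv$-equivalent, and $\iota_P$ separates distinct cosets.

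The main obstacle is really just careful bookkeeping: tracking which of $[\bar g,1]$ versus $]\bar g,1]$ arises from each operation, and handling by convention the boundary values $\bar a\in\{0,1\}$ (where $P_0=]0,1]=\zero$ and $P_1=\emptyset$ must be read as $\one$). Once those identifications are pinned down, the morphism and injectivity checks collapse to short invocations of lemmas already in hand; the only place where non-discreteness is genuinely essential is the injectivity step, where density of $\mathcal{L}/P$ is precisely what guarantees the cuts differ at more than one point.
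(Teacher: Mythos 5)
Your proposal is correct and follows essentially the same route as the paper: reduce to $\mathcal L/P$ via \propref{prop:quot}, identify $P_{a}$ with the coprincipal cut $][a]_{P}, \one]$, and check preservation of $\bullet^{+}$ and $\sqTo$ up to $\equiv$ using the lemma computing $]p,1]^{+}$ together with \propref{prop:prOne} (with the degenerate boundary cases handled by convention, as in the paper). Your injectivity step is in fact slightly more careful than the paper's one-line appeal to $P_{a}=P_{b}$ iff $\eta_{P}(a)=\eta_{P}(b)$, since you invoke non-discreteness (hence density of the linear order on $\mathcal L/P$) to get $\card{P_{a}\setminus P_{b}}\geq 2$ and conclude the two filters lie in distinct $\equiv$-classes, which is exactly where that hypothesis is needed.
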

\begin{proof}
    We know that $P_{a}=P_{b}$ iff $\eta_{P}(a)=\eta_{P}(b)$ and so $\iota$
    must be injective.
    
    From \propref{prop:quot} and \propref{prop:prOne} we know that 
    $P_{a}\sqTo P_{b}= \eta_{P}^{-1}[P_{a}/P\sqTo P_{b}/P]=
    \eta_{P}^{-1}[][a_{P}], \one]\sqTo][b]_{P}, \one]]= 
    \eta_{P}^{-1}[[[a\to b]_{P}, \one]]\equiv \eta_{P}[][a\to b]_{P}, 
    \one]]$. Hence $\iota$ preserves $\sqTo$.
    
    And $P_{a}^{+}= \eta_{P}^{-1}[][a]_{P}, \one]^{+}]= 
    \eta_{P}^{-1}[[[\lnot a]_{P}, \one]]\equiv \eta_{P}^{-1}[][\lnot 
    a]_{P}, \one]]= P_{\lnot a}$ and so $\iota$ preserves negation.
\end{proof}

And finally we put the pieces together and see that $\hat{\mathcal 
L}_{P}$ interpolates $\eta_{PQ}\colon\mathcal L/P\to\mathcal L/Q$.

\begin{thm}
The composite mapping
$$
    \mathcal L/P\rTo^{\iota_{P}}\hat{\mathcal 
    L}_{P}\rTo^{\hat\eta_{PQ}}\mathcal L/Q
$$
is just
$[a]_{P}\mapsto [a]_{Q}$. 
\end{thm}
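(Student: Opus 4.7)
The plan is to unwind the definitions and show directly that $\hat\eta_{PQ}(\iota_P([a]_P)) = [a]_Q$ for every $a \in \mathcal L$. By construction the left side equals $\hat\eta_{PQ}(P_a) = q_Q(P_a)$, the unique $Q$-coset $C$ satisfying $C \cap P_a \ne \emptyset \ne C \setminus P_a$. So it suffices to verify that the coset $[a]_Q$ straddles the boundary of $P_a$ in this sense, for uniqueness of $q_Q$ then forces $q_Q(P_a) = [a]_Q$.

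The edge case $a \in P$ is trivial: there $\iota_P([a]_P) = \one = \Set 1$, which maps under $\hat\eta_{PQ}$ to $q_Q(\Set 1) = [1]_Q = Q = [a]_Q$. So assume $a \notin P$. One half of the straddling is immediate: $a \in [a]_Q$ but $a \notin P_a$ (since $a \to a = 1 \in P$), so $[a]_Q \setminus P_a$ contains $a$. For the other half I must produce some $z \in [a]_Q$ with $z \to a \notin P$. Passing to the linearly ordered non-discrete quotient $\mathcal L/P$, and writing $\alpha = [a]_P < 1$ and $\bar Q = Q/P$, this reduces to finding $w \in \mathcal L/P$ with $w \sim_{\bar Q} \alpha$ and $w > \alpha$.

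To construct $w$, I would use that $P \subsetneq Q$ supplies some $q_0 \in \bar Q$ with $q_0 < 1$; then $q = q_0 \join \alpha$ lies in $\bar Q$ with $\alpha \le q < 1$. Set $w = \alpha \oplus \lnot q$. Invoking the MV-algebra identity $(x \otimes \lnot y) \oplus y = x \join y$ (a restatement of $(x \to y) \to y = x \join y$), a short calculation gives
\[
   w \to \alpha \;=\; \lnot w \oplus \alpha \;=\; (\lnot \alpha \otimes q) \oplus \alpha \;=\; q \join \alpha \;=\; q \in \bar Q,
\]
so $w \sim_{\bar Q} \alpha$. Since $w \to \alpha = q < 1$ we have $w \not\le \alpha$, and linearity of $\mathcal L/P$ forces $w > \alpha$. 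Any lift $z \in \mathcal L$ of $w$ then lies in $[a]_Q \cap P_a$, completing the argument.

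The main obstacle is the joint constraint on $w$: it must be strictly above $\alpha$ yet still $\bar Q$-equivalent to $\alpha$. The trick is to choose $q \in \bar Q$ large enough to absorb $\alpha$, so that the identity $(\lnot \alpha \otimes q) \oplus \alpha = \alpha \join q$ simplifies cleanly to $q$ — delivering both the $\bar Q$-equivalence and the strict inequality in one stroke.
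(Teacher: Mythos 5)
Your proposal is correct and follows the same overall strategy as the paper: reduce the claim to $q_{Q}(P_{a})=[a]_{Q}$ and establish it by exhibiting elements of the coset $[a]_{Q}$ on both sides of the boundary of $P_{a}$, then appeal to uniqueness of the boundary coset. The only real difference is how the witness inside $P_{a}$ is produced. The paper picks $q\in Q\setminus P$ and uses $q\notin P=\mathcal K(P_{a})$ to obtain $f\notin P_{a}$ with $q\to f\in P_{a}$, then sandwiches $f\le_{Q}a\le_{Q}q\to f$ and notes $[f]_{Q}=[q\to f]_{Q}$ to place both elements in $[a]_{Q}$; you instead construct the witness explicitly as a lift of $w=[a]_{P}\oplus\lnot q$ in $\mathcal L/P$, computing $w\to[a]_{P}=q\in Q/P$ with $q<1$, and use $a$ itself for the other side. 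Both devices do the same job --- yours is a touch more computational, the paper's more structural via the kernel property. Two small remarks: the non-discreteness of $\mathcal L/P$ that you invoke is never actually used in your argument; and your handling of the case $a\in P$ (which the paper silently skips) is fine in substance, though strictly speaking $\iota_{P}([a]_{P})$ is then the empty filter, which is only $\equiv$-equivalent to the top class $\{1\}$ rather than equal to it.
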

\begin{proof}
    It suffices to show that $P\subsetneq Q$ implies 
    $q_{Q}(P_{a})=[a]_{Q}$. Let $q\in Q\setminus P$ and so there is 
    some $f\notin P_{a}$ such that $q\to f\in P_{a}$. 
    
    We know that $P_{a}/P=][a]_{P}, \one]$ and so $f\le_{P} a$. Hence 
    $f\le_{Q} a$. As $q\to f\in P_{a}$ we have $a<_{P}q\to f$ and so
    $a\le_{Q} q\to f$. 
    
    As $[f]_{Q}=[q\to f]_{Q}$ we have $[f]_{Q}=[a]_{Q}$ is the 
    $Q$-boundary coset. 
\end{proof}

\begin{bibdiv}
\begin{biblist}
    \DefineName{cgb}{Bailey, Colin G.}
    \DefineName{jso}{Oliveira,  Joseph S.}

\bib{mvpaperOne}{article}{
title={The Prime Filters of an MV-Algebra}, 
author={cgb}, 
status={in preparation},
eprint={arXiv:0907.3328v1 [math.RA]}
}

\bib{Martinez:one}{article}{
author={Nestor G. Martinez}, 
title={A Simplified Duality Theorem for Implicative Lattices and 
$\ell$-Groups}, 
journal={Studia Logica}, 
pages={185--204},
volume={56},
date={1996}
}

\bib{Martinez:two}{article}{
author={Nestor G. Martinez}, 
title={A topological duality for some lattice ordered algebraic structures 
including $\ell$-groups}, 
journal={Algebra Universalis}, 
pages={516--541},
volume={31},
date={1994}
}

\bib{MartPreist:one}{article}{
author={Nestor G. Martinez},
author={H.A. Priestley}, 
journal={Mathware {\&} Soft Computing},
title={Uniqueness of MV-algebra implication and de Morgan negation},
pages={229--245},
volume={2},
date={1995}
}
\end{biblist}
\end{bibdiv}

\end{document}